%

\input ./style/arxiv-general.cfg
\documentclass[aop,MSNbibl,dvips]{arximspdf}
\makeatletter
   \@ifpackageloaded{graphicx}{}{\usepackage{graphicx}}
\makeatother

%

\doi{10.1214/14-AOP973} 
\volume{44}
\issue{1}
\pubyear{2016}
\firstpage{276}
\lastpage{306}
\docsubty{FLA}

\makeatletter

\newcommand{\rrvert}{\vert}
\newcommand{\llvert}{\vert}
\newcommand{\eqref}[1]{(\ref{#1})}
\newcommand{\arctanh}{\operatorname{arctanh}}
\newcommand{\ep}{{\mathbb{E}}}
\newcommand{\pr}{{\mathbb{P}}}
\newcommand{\Z}{{\mathbb{Z}}}
\newcommand{\re}{{\mathbb{R}}}
\newcommand{\wt}[1]{\widetilde{#1}}
\newcommand{\ric}{\operatorname{Ric}}
\newcommand{\vol}{\mathrm{vol}}
\newcommand{\II}{\mathcal{I}}

\newcommand{\PP}{\mathcal{P}}
\newcommand{\AAA}{{\mathcal A}}
\newcommand{\ol}[1]{\overline{#1}}
\newtheorem{theorem}{Theorem}[section]
\newtheorem{theoremm}{Theorem}[section]
\newtheorem{corollary}[theorem]{Corollary}
\newtheorem{corollaryy}[theoremm]{Corollary}
\newtheorem{proposition}[theorem]{Proposition}
\newtheorem{propositionn}[theoremm]{Proposition}
\newtheorem{lemma}[theorem]{Lemma}
\newtheorem{lemmaa}[theoremm]{Lemma}
\newproclaim{definition}[theorem]{Definition}
\newtheorem{conjecture}[theorem]{Conjecture}
\newproclaim{remark}[theorem]{Remark}
\newproclaim{example}[theorem]{Example}
\newtheorem{condition}{Condition}
\makeatother

\begin{document}
\begin{frontmatter}

\title{Discrete versions of the transport equation and the
Shepp--Olkin conjecture\thanksref{T1}}
\runtitle{Discrete transport and the Shepp--Olkin conjecture}
\thankstext{T1}{Supported by an EPSRC grant, Information Geometry of
Graphs, reference EP/I009450/1.}

\begin{aug}
\author[A]{\fnms{Erwan}~\snm{Hillion}\ead[label=e1]{erwan.hillion@univ-amu.fr}}
\and
\author[B]{\fnms{Oliver}~\snm{Johnson}\corref{}\ead[label=e2]{o.johnson@bristol.ac.uk}}
\runauthor{E. Hillion and O.~T. Johnson}
\affiliation{University of Luxembourg and University of Bristol}

\address[A]{Institut Mat\'ematiques de Marseille---UMR 7373\\
Technop\^ole Ch\^ateau-Gombert\\
39, rue Fr\'ed\'eric Joliot-Curie\\
13453 MARSEILLE Cedex 13\\
France\\
\printead{e1}}
\address[B]{School of Mathematics\\
University of Bristol\\
University Walk\\
Bristol BS8 1TW\\
United Kingdom\\
\printead{e2}}
\end{aug}

\received{\smonth{3} \syear{2013}}
\revised{\smonth{9} \syear{2014}}

%
\begin{abstract}
We introduce a framework to consider transport problems for
integer-valued random variables. We introduce
weighting coefficients which allow us to characterize transport
problems in a gradient flow setting,
and form the basis of our introduction of a discrete version of the
Benamou--Brenier formula. Further, we use
these coefficients to state a new form of weighted log-concavity. These
results are applied to prove the monotone case of the Shepp--Olkin
entropy concavity conjecture.
\end{abstract}

%
\begin{keyword}[class=AMS]
\kwd[Primary ]{60E15}
\kwd{60K35}
\kwd[; secondary ]{94A17}
\kwd{60D99}
\end{keyword}

\begin{keyword}
\kwd{Entropy}
\kwd{transportation of measures}
\kwd{Bernoulli sums}
\kwd{concavity}
\end{keyword}
%
\end{frontmatter}

\section{Introduction}\label{sec1}

In recent years, there has been intensive study of relationships
between entropy and
probabilistic inequalities.
Since the work of
Monge in the eighteenth century, it has been understood how one
probability measure on $\re$ can be smoothly
transformed into another along a path minimizing an appropriate cost.
As described, for example, in
\cite{villani3,villani}, use of the quadratic cost function induces
the quadratic Wasserstein distance $W_2$ which, using
the Benamou--Brenier formula \cite{benamou2,benamou}, can be understood
in terms
of velocity fields arising in gradient models of the kind discussed in
\cite{ambrosio,carlen4,jordan3}.
In such models, concavity
of entropy along the geodesic plays a central role, giving proofs of
inequalities such as HWI, log-Sobolev and transport inequalities; see,
for example, \cite{cordero3}.
A key role is played by log-concavity of the underlying measures and
the Ricci curvature of
the underlying metric space; see, for example, \cite{lott,sturm,sturm2}.

However, this work has almost exclusively focused on continuous random
variables, taking values in $\re^d$,
or more generally on Riemannian
manifolds satisfying a curvature condition. In this paper, we propose a
framework for considering
similar problems for integer-valued random variables. We show how many
natural models of transportation of
discrete random variables can be considered as gradient models and
propose a discrete version of the Benamou--Brenier
formula. As an example of the insights gained by this approach, we
give a proof of a significant new case of the Shepp--Olkin
concavity conjecture \cite{shepp}, which has remained unresolved for
over 30 years.

Shepp and Olkin considered sums of $n$ independent Bernoulli variables
(referred to as Bernoulli sums
throughout this article),
with parameters $p_1,\ldots, p_n$, respectively, where $p_i \in[0,1]$,
and $n$ remains fixed. This sum has
a probability distribution $(f_k)_{k =0, 1, \ldots, n}$, and Shepp and
Olkin conjectured
\cite{shepp} that its entropy is a concave
function of its parameters.

\begin{conjecture}[(\cite{shepp})] \label{conj:SObis}
Consider the entropy of $(f_k)_{k = 0, 1, \ldots, n}$, defined by
\[
H(p_1, \ldots, p_n):= - \sum
_{k=0}^n f_k \log(f_k),
\]
where by convention $0 \log(0) = 0$.
If $p_1, \ldots, p_n \dvtx [0,1] \rightarrow[0,1]$ are affine functions, then
\[
H\dvtx [0,1]\rightarrow\re, \qquad t \mapsto H\bigl(p_1(t),\ldots,
p_n(t)\bigr)
\]
is a concave function in $t$.
\end{conjecture}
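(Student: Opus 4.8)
The plan is to prove the statement in its strongest equivalent form: joint concavity of $(p_1,\ldots,p_n)\mapsto H$ on $[0,1]^n$, i.e.\ negative semidefiniteness of its Hessian, since concavity along every affine segment through every interior point is exactly this. Along an affine path with constant velocities $a_i=\dot p_i$ one has $\ddot p_i=0$, so writing $f_k=f_k(p_1,\ldots,p_n)$ and using $f_k=(1-p_i)f^{(i)}_k+p_i f^{(i)}_{k-1}$, where $f^{(i)}$ is the Bernoulli sum with the $i$-th variable deleted, I would record $\partial_{p_i}f_k=f^{(i)}_{k-1}-f^{(i)}_k$, $\partial_{p_i}^2 f_k=0$, and $\partial_{p_i}\partial_{p_j}f_k=f^{(ij)}_{k-2}-2f^{(ij)}_{k-1}+f^{(ij)}_k$ for $i\ne j$. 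Since $\sum_k\dot f_k=0$, differentiating twice gives
$$H''(t)=-\sum_k\frac{\dot f_k^2}{f_k}-\sum_k\ddot f_k\log f_k,$$
with $\dot f_k=\sum_i a_i(f^{(i)}_{k-1}-f^{(i)}_k)$ and $\ddot f_k=\sum_{i\ne j}a_ia_j(f^{(ij)}_{k-2}-2f^{(ij)}_{k-1}+f^{(ij)}_k)$. Thus the conjecture is equivalent to the quadratic-form inequality $C(\vec a)\le I(\vec a)$ for every $\vec a\in\re^n$, where $I(\vec a)=\sum_k\dot f_k^2/f_k\ge 0$ is a Fisher-type term and $C(\vec a)=-\sum_k\ddot f_k\log f_k$ is a curvature term.

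Next I would rewrite $C$. Reindexing the inner sum, $\sum_k\ddot f_k\log f_k=\sum_{i\ne j}a_ia_j\sum_m f^{(ij)}_m\,(\log f_{m+2}-2\log f_{m+1}+\log f_m)$, and crucially every index appearing stays in $\{0,\ldots,n\}$, so no $\log 0$ occurs and the manipulation is legitimate. Because Bernoulli sums are ultra-log-concave the second difference $\log f_{m+2}-2\log f_{m+1}+\log f_m$ is $\le 0$, whence
$$C(\vec a)=\sum_{i\ne j}a_ia_j\,w_{ij},\qquad w_{ij}=\sum_m f^{(ij)}_m\bigl|\log f_{m+2}-2\log f_{m+1}+\log f_m\bigr|\ge 0.$$
Writing also $I(\vec a)=\sum_{i,j}a_ia_j F_{ij}$ with $F_{ij}=\sum_k(f^{(i)}_{k-1}-f^{(i)}_k)(f^{(j)}_{k-1}-f^{(j)}_k)/f_k$, the matrix $F$ is positive semidefinite (a Gram matrix) and the conjecture becomes the symmetric-matrix inequality $F\succeq W$, where $W$ has entries $w_{ij}$ off the diagonal and $0$ on it. In the monotone directions all $a_ia_j\ge 0$, so $C(\vec a)\ge 0$; there the required bound $C\le I$ is precisely the weighted log-concavity estimate established earlier in the paper, which settles every $\vec a$ in the positive or negative orthant. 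For $n\le 2$ the full statement is then immediate, since for $n=2$ any mixed-sign direction has $C=2a_1a_2\,w_{12}\le 0\le I$.

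The genuine difficulty, and the step I expect to be hardest, is the mixed-sign regime for $n\ge 3$: there $C(\vec a)=\sum_{i\ne j}a_ia_j w_{ij}$ combines pairs with $a_ia_j>0$ and $a_ia_j<0$ and can be strictly positive, while the Fisher term $I(\vec a)$ is simultaneously shrunk by the cancellations in $\dot f_k=\sum_i a_i(f^{(i)}_{k-1}-f^{(i)}_k)$. One cannot sidestep this by the reflection $p_i\mapsto 1-p_i$: flipping a strict subset of the coordinates changes the law of the Bernoulli sum, and hence its entropy, so mixed directions do not reduce to monotone ones by symmetry. Nor is $W$ positive semidefinite, so $F\succeq W$ is not a termwise comparison. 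The route I would pursue is to upgrade the weighted log-concavity inequality from the scalar monotone bound to the full matrix bound $F\succeq W$, proved by induction on $n$: the recursion $f_k=(1-p_n)f^{[n-1]}_k+p_n f^{[n-1]}_{k-1}$ adding one Bernoulli variable lets one express $F$ and $W$ at level $n$ through their level-$(n-1)$ analogues together with the paper's weighting coefficients, with an induction hypothesis strengthened enough to absorb the new cross terms. The crux is to show that the positive off-diagonal contributions to $C$ from like-sign pairs never exceed what $F$ supplies; I would try to extract from each $w_{ij}$ a partial square that can be charged against $F_{ij}$ and control the residual by the strengthened hypothesis. Pinning down the precise strengthened statement — strong enough to close the induction yet still true — and verifying its base case is where the real work, and the main risk, lies.
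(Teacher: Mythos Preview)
This statement is the full Shepp--Olkin conjecture, which the paper does \emph{not} prove: it is listed as open (Conjecture~\ref{conj:SObis}), and only the monotone special case $p_i'\ge 0$ for all $i$ is established (Theorem~\ref{thm:SObis}). So there is no ``paper's own proof'' to compare against. Your reduction to the quadratic-form inequality $C(\vec a)\le I(\vec a)$ is correct and coincides with the paper's starting expression~\eqref{eq:tocontrol}; deferring the monotone directions to the paper's main theorem and disposing of $n=2$ by the sign of $a_1a_2$ are both legitimate.

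The gap is exactly where you place it, and it is genuine: for mixed-sign directions with $n\ge 3$ you offer only an induction on $n$ with an unspecified strengthened hypothesis, which is a programme, not a proof. Nor does the paper's machinery extend, for concrete reasons: every step of Theorem~\ref{thm:cond1-4} relies on positivity available only in the monotone regime. Specifically, $g_k=\frac{1}{v}\sum_i p_i' f^{(i)}_k$ is a probability mass function (needed to apply the bound $-\log v\le\theta(v)$ of Lemma~\ref{lem:logs} in~\eqref{eq:replace}); $h_k=\frac{1}{v^2}\sum_{i\ne j}p_i'p_j' f^{(i,j)}_k\ge 0$ (needed so that replacing $h_k$ by $\widetilde h_k$ via Condition~\ref{cond:delta} goes the right way in passing from~\eqref{eq:tocontrol2} to~\eqref{eq:tocontrol3}); and $\alpha_k\in[0,1]$ in~\eqref{eq:alphaDefinition} (needed for the entire $\glc{\alpha}$ framework). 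Each of these fails once the $p_i'$ take both signs. Your matrix reformulation $F\succeq W$ is an equivalent restatement of the open problem rather than progress on it, and the ``charge $w_{ij}$ against $F_{ij}$'' heuristic cannot work termwise since $W$ has zero diagonal and is therefore never positive semidefinite unless it vanishes.
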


We emphasize that Conjecture \ref{conj:SObis}
refers to concavity of entropy in the parameter space. This should be contrasted
with concavity in the space of mass functions themselves. The result
that the entropy of mixtures of mass functions
$f_k(\alpha) := (1-\alpha) f_k^{(1)} + \alpha f_k^{(2)}$ is convex in
$\alpha$ is standard; see, for
example, \cite{cover}, Theorem~2.7.3. Indeed duality between this
parameter representation
and the distribution space is exploited in information geometry (see,
e.g., \cite{amari}), where
the concavity of entropy also plays a central role.

Although the Shepp--Olkin Conjecture \ref{conj:SObis} remains open, we
briefly describe the main cases which
had previously been resolved. First, Shepp and Olkin's original paper
\cite{shepp} showed that the entropy is a Schur-concave
function, stated that Conjecture \ref{conj:SObis} holds for $n=2,3$ and
proved it for interpolation between two binomials, when $p_i(t) = t$
for all $i$.

Second, Theorem~2 of Yu and Johnson \cite{johnsonc6} proves concavity
of the entropy of
$H(T_t X + T_{1-t} Y)$, for $X$ and $Y$ satisfying the ultra
log-concavity property
(see Definition~\ref{def:ulc} below), where $T_t$ represents R\'
{e}nyi's thinning
operation \cite{renyi4}; see equation (\ref{eq:thin}) below. As
remarked in \cite{johnsonc6}, Corollary~1, this resolves
the special case of Conjecture \ref{conj:SObis} where each parameter is
either $p_i(t) = p_i(0) (1-t)$ or
$p_i(t) = p_i(1) t$.

Third, Theorem~1.1 of Hillion \cite{hillion2} resolves the
case where for each $i$, either $p_i(t) \equiv p_i(0)$ for all $t$ or
$p_i(t) = t$ (the translation case of Example~\ref{ex:translation} below).

In this article, given a family of affine functions $p_1(t),\ldots,
p_n(t)$, we consider the associated Bernoulli sum
$(f_k(t))_{k = 0,1, \ldots, n}$ as a function of the spatial variable
$k$ and the time variable $t$. We often omit the explicit dependence of
$f_k$ on $t$. Throughout this article we restrict our
attention to the special case that $p_i' \geq0$ for every $i \in\{
1,\ldots, n\}$, so the random
variables $f_k(t)$ satisfy a stochastic ordering property.
We write the left derivative $ \nabla_1 f_k = f_{k} -f_{k-1}$,
and write $\nabla_2 =  ( \nabla_1  )^2$ for the
map taking $\nabla_2 f_k = f_k - 2 f_{k-1} + f_{k-2}$.

The paper is organized as follows. In Section~\ref{sec:geodesics} we
review properties of
continuous gradient flow models and
develop a framework to prove concavity of entropy.
We introduce and discuss the Benamou--Brenier formula, equation \eqref
{eq:benbren}.

In Section~\ref{sec:discrete} we introduce a formalism to describe
interpolation of discrete probability mass functions $f_k$, motivated
by properties of
the binomial mass functions. In Definition~\ref{def:ourbb},
we propose a discrete analogue of the Benamou--Brenier formula.
A key role is played by our introduction of a family of functions
$\alpha_k(t)$
which are used to generate
mixtures of $f_k$ and $f_{k+1}$. We
write $\AAA$ for the set of measurable functions ${\bolds{\alpha
}}(t) =
(\alpha_0(t), \alpha_1(t), \ldots, \alpha_n(t))$,
where $\alpha_0(t) \equiv0$ and $\alpha_n(t) \equiv1$,
and $0 \leq\alpha_k(t) \leq1$ for all $k$ and $t$.

Our formula of Benamou--Brenier type motivates the following definition:

\begin{definition} \label{def:modtransp}
We say that a family of probability mass functions $f_k(t)$ supported
in $\{0,\ldots, n\}$ is a constant velocity path if for some
$v$ and for some family of probability mass functions $g^{({\bolds
{\alpha}})}_k(t)$
supported in $\{0,\ldots, n-1\}$, it
satisfies a modified transport equation
%
\begin{equation}
\label{eq:modified} \frac{\partial f_k}{\partial t}(t) = - v \nabla_1 \bigl(
g^{({\bolds{\alpha}})}_k(t) \bigr) \qquad\mbox{for $k=0,1, \ldots, n$},
\end{equation}
where for some ${\bolds{\alpha}}(t) \in\AAA$,
%
\begin{equation}
\label{eq:mixture} g^{({\bolds{\alpha}})}_k(t) = \alpha_{k+1}(t)
f_{k+1}(t) + \bigl(1-\alpha_k(t)\bigr) f_k(t)\qquad
\mbox{for $k=0,1, \ldots, n-1$}.
\end{equation}
If $f_k(t)$ satisfies a modified transport equation, we write $h$ for
the function
(not necessarily a probability mass function) satisfying a second-order
modified transport
equation
%
\begin{equation}
\frac{\partial^2 f_k}{\partial t^2}(t) = v^2 \nabla_2 ( h_k )\qquad
\mbox{for $k=0,1, \ldots, n$}. \label{eq:hkdef}
\end{equation}
An explicit expression for $h_k$ is given in equation \eqref
{eq:ModifiedTransport2}.
\end{definition}

In Section~\ref{sec:examples} we discuss the constant velocity path
property. Lemma~\ref{lem:unique}
shows that any such representation is unique, and we give some examples
which lie within this framework.
In Section~\ref{sec:entcon}, by examining the coefficients
${\bolds{\alpha}} \in\AAA$ associated with a constant velocity path
$f_k(t)$, called optimal coefficients for $f_k(t)$,
we give sufficient conditions for the concavity of entropy along the
interpolation.
To be specific, we introduce the following three conditions:

\begin{condition}[($k$-MON)] \label{cond:kmon}
Given $t$, we say the $\alpha_k(t)$ are $k$-monotone at $t$
if
%
\begin{equation}
\label{eq:kmon} \alpha_k(t) \leq\alpha_{k+1}(t)\qquad \mbox{for
all $k = 0, \ldots, n-1$.}
\end{equation}
\end{condition}

\begin{condition}[($t$-MON)] \label{cond:tmon}
Given $t$, we say the $\alpha_k(t)$ are $t$-monotone at $t$
if
%
\begin{equation}
\label{eq:tmon} \frac{\partial\alpha_{k}}{\partial t}(t) \geq0 \qquad\mbox{for all $k = 0, \ldots, n$}.
\end{equation}
\end{condition}

\begin{condition}[(GLC)] \label{cond:glc}
We say probability mass function $f_k$
supported on $\{ 0, 1, \ldots, n \}$ is ${\bolds{\alpha}}$-generalized
log-concave at $t$, denoted ${\mathbf{GLC}}({\bolds{\alpha}
(t)})$, if
for all $k=0, \ldots, n-2$,
%
\begin{equation}
\operatorname{GLC}({\bolds{\alpha}})_k(t) \geq0, \label{eq:glcn}
\end{equation}
where
%
\begin{eqnarray}
\operatorname{GLC}({\bolds{\alpha}})_k(t) &:=& \alpha_{k+1}(t) \bigl(1-
\alpha _{k+1}(t)\bigr) f_{k+1}^2(t)
\nonumber
\\[-8pt]
\\[-8pt]
\nonumber
&&{}-
\alpha_{k+2}(t) \bigl(1-\alpha_k(t)\bigr) f_{k}(t)
f_{k+2}(t).
\end{eqnarray}
\end{condition}

In Section~\ref{sec:entcon} we prove the following theorem:

\begin{theorem}\label{th:ConvexityEntropy} Consider a constant velocity
path $f_k(t)$ of probability mass
functions and associated optimal ${\bolds{\alpha}}(t)$. If
Conditions \ref{cond:kmon}, \ref{cond:tmon} and \ref{cond:glc} hold
for a given $t = t^*$,
then
the entropy $H(f(t))$ is concave in $t$ at $t = t^*$.
\end{theorem}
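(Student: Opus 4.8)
The plan is to establish the equivalent statement $H''(t^*)\le 0$ by computing the second derivative of $H(f(t))$ along the constant speed path and decomposing it into pieces whose signs are controlled by Conditions \ref{cond:kmon}, \ref{cond:tmon} and \ref{cond:glc}. Since $\sum_k f_k\equiv 1$ we have $\sum_k\partial_t f_k\equiv 0$, whence
\[
H'(t)=-\sum_k (\partial_t f_k)\log f_k,\qquad
H''(t)=-\sum_k (\partial_t^2 f_k)\log f_k-\sum_k\frac{(\partial_t f_k)^2}{f_k}.
\]
Differentiating the transport equation \eqref{eq:modified} gives $\partial_t^2 f_k=-v\,\nabla_1(\partial_t\galpha_k)$ (equivalently the second-order equation \eqref{eq:hkdef} together with the explicit $h_k$ of \eqref{eq:ModifiedTransport2}), and one summation by parts --- whose boundary terms vanish because $f$ is supported on $\{0,\dots,n\}$, $\galpha$ on $\{0,\dots,n-1\}$, $\alpha_0\equiv0$, $\alpha_n\equiv1$ --- combined with $\partial_t f_k=-v\nabla_1\galpha_k$ yields
\[
H''(t)=-v\sum_k (\partial_t\galpha_k)\,\log\frac{f_{k+1}}{f_k}-v^2\sum_k\frac{(\nabla_1\galpha_k)^2}{f_k}.
\]
Expanding $\partial_t\galpha_k$ by differentiating \eqref{eq:mixture} and resubstituting \eqref{eq:modified} splits it into a part linear in the time derivatives $\partial_t\alpha_k$ and a part quadratic in $\galpha$, so that $H''$ becomes the sum of (i) the Fisher-type term $-v^2\sum_k(\nabla_1\galpha_k)^2/f_k$, (ii) a temporal term built from $\partial_t\alpha_k$, and (iii) a term pairing discrete gradients of $\galpha$ against the log-ratios $\log(f_{k+1}/f_k)$.

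For term (ii): a further summation by parts brings it to the shape $-v\sum_k (\partial_t\alpha_k)\,f_k\,\log\!\big(f_k^2/(f_{k-1}f_{k+1})\big)$. Here I would first observe that Conditions \ref{cond:kmon} and \ref{cond:glc} together force log-concavity of $f$ at $t^*$: from $\GLC{\alpha}_k(t^*)\ge 0$ one gets $f_{k+1}^2/(f_kf_{k+2})\ge \alpha_{k+2}(1-\alpha_k)/\big(\alpha_{k+1}(1-\alpha_{k+1})\big)$, and \eqref{eq:kmon} makes the right-hand side $\ge 1$ (at least on the interior of its support). Hence each $\log\!\big(f_k^2/(f_{k-1}f_{k+1})\big)\ge0$; since also $\partial_t\alpha_k\ge0$ by \eqref{eq:tmon} and $v>0$ in the monotone regime under consideration (the case $v=0$ being trivial), term (ii) is $\le 0$.

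The crux is that term (iii) is dominated by the Fisher term (i). After reindexing, (iii) equals $v^2\sum_k(\nabla_1\galpha_k)L_k$ with $L_k=\alpha_k\log(f_k/f_{k-1})+(1-\alpha_k)\log(f_{k+1}/f_k)$, so what must be shown is $\sum_k(\nabla_1\galpha_k)L_k\le\sum_k(\nabla_1\galpha_k)^2/f_k$. The route I would take is to parametrise the mixtures \eqref{eq:mixture} through the geometric means $\sqrt{f_kf_{k+1}}$, which turns each log-ratio into an $\arctanh$, and then to estimate every log-ratio by the algebraic quantity sitting beside it using a sharp one-variable inequality, applied in the direction forced by the sign of its coefficient. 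This should reduce the claim to a pointwise-in-$k$ inequality whose only obstruction is exactly the sign of $\GLC{\alpha}_k(t^*)$, the remaining cross-terms being of the form $\alpha_{k+1}-\alpha_k\ge0$ and hence handled by $k$-MON. Adding the three non-positive contributions (i), (ii) and (iii) then gives $H''(t^*)\le0$.

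I expect step (iii) to be the main obstacle. The ratios $\log(f_{k+1}/f_k)$ carry no definite sign, so they cannot be discarded, and a crude Cauchy--Schwarz bound of (iii) against (i) loses too much; the delicate point is to choose the geometric-mean substitution and the linearising inequality finely enough that (iii) is absorbed into (i) with a remainder equal, term by term, to a manifestly non-negative multiple of $\GLC{\alpha}_k(t^*)$. The roles of the hypotheses are clear in outline --- GLC plays the part of a discrete lower Ricci bound, $k$-MON the spatial monotonicity of the mixing weights, $t$-MON their monotonicity in time --- but disentangling all three inside a single identity for $H''$, and checking that $k$-MON really does dominate every cross-term produced by the linearisation, is where the substantive computation lies.
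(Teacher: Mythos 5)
Your overall skeleton is sound and parallels the paper's strategy in broad outline: compute $H''$, split it into a Fisher-type term, a temporal term carrying $\partial_t\alpha_k$, and a spatial term pairing increments of $\galpha$ against log-ratios of $f$. Your treatment of term (ii) is essentially correct: the paper likewise observes that $k$-MON plus GLC force log-concavity of $f$, so the factor multiplying $\frac{1}{v}\partial_t\alpha_{k+1}$ is non-negative and $t$-MON gives the sign (see Equation \eqref{eq:ktmon}). But the proof is not complete: the step you yourself flag as ``the main obstacle'' --- showing that term (iii) is absorbed by the Fisher term (i) --- is precisely where all the content of the theorem lives, and you offer only a hoped-for route (a geometric-mean substitution turning log-ratios into $\arctanh$, plus an unspecified ``sharp one-variable inequality'') rather than an argument. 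Without that step there is no proof.

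For comparison, here is how the paper closes the gap. Writing $g_k$ for $\galpha_k$, $A_k=f_{k+1}g_k-f_kg_{k+1}$, $B_k=f_{k+1}g_{k+1}-f_{k+2}g_k$ and $D_{k+1}=f_{k+1}^2-f_kf_{k+2}$, the identities $(1-\alpha_{k+1})A_k=\GLC{\alpha}_k+(\alpha_{k+1}-\alpha_k)f_kg_{k+1}$ and $\alpha_{k+1}B_k=\GLC{\alpha}_k+(\alpha_{k+2}-\alpha_{k+1})f_{k+2}g_k$ show that $k$-MON and GLC give $A_k\geq0$ and $B_k\geq0$, i.e.\ the two ratios $f_kg_{k+1}/(f_{k+1}g_k)$ and $f_{k+2}g_k/(f_{k+1}g_{k+1})$ are both at most $1$. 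The elementary bound $-\log u\leq \frac{1}{2u}-\frac{u}{2}$ for $u\leq1$ (Lemma \ref{lem:logs}), applied to each of these two factors of $-\log\bigl(f_kf_{k+2}/f_{k+1}^2\bigr)$, produces exactly $\frac{D_{k+1}}{2f_{k+1}g_kg_{k+1}}\bigl(g_k^2/f_k+g_{k+1}^2/f_{k+2}\bigr)$; then replacing $h_k$ by $\wt{h}_k=(g_{k+1}A_k+g_kB_k)/D_{k+1}$ (Condition \ref{cond:delta}, which Proposition \ref{prop:fghIneq1} derives from your three hypotheses) and subtracting the Fisher term makes the entire $k$-th summand collapse to $-\frac{f_kf_{k+1}f_{k+2}}{2g_kg_{k+1}}\bigl(\frac{g_k^2}{f_kf_{k+1}}-\frac{g_{k+1}^2}{f_{k+1}f_{k+2}}\bigr)^2\leq0$. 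Note that the remainder is a perfect square, not ``a manifestly non-negative multiple of $\GLC{\alpha}_k(t^*)$'' as you predicted --- GLC enters earlier, in making the arguments of the logarithms at most $1$ so that the linearising inequality points the right way. Until you either reproduce this chain or substantiate your $\arctanh$ substitution with an actual inequality, the argument has a hole at its centre.
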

%

%
\begin{example}
Theorem~\ref{th:ConvexityEntropy} gives a new perspective on Shepp and
Olkin's proof \cite{shepp} that the entropy of
binomial $\operatorname{Bin}(n, t)$ random variables [with probability mass
function $f_k(t)={n\choose k} t^k (1-t)^{n-k}$ for $k = 0,\ldots,n$]
is concave in $t$.
In this case, Example~\ref{ex:binomialint} shows that the optimal
$\alpha_k(t) \equiv k/n$, so
the $k$-monotone and $t$-monotone conditions, Conditions \ref
{cond:kmon} and \ref{cond:tmon} are
clear. Further, $\alpha_k(t) \equiv k/n $ means that GLC Condition
\ref
{cond:glc} reduces
to the ultra log-concavity of order $n$ of Pemantle~\cite{pemantle} and
Liggett \cite{liggett}
(see Definition~\ref{def:ulc}), which clearly holds with equality in
this case.
In fact, in the more general
``symmetric case'' where $p_i'$ does not depend on $i$, Remark~\ref
{rem:symm} shows
$\alpha_k(t) =k/n$, and a similar argument applies.
\end{example}

Finally, we use this formalism to consider the Shepp--Olkin
Conjecture \ref{conj:SObis}, in the ``monotone'' setting $p_i' \geq0$
for all $i$.
In Section~\ref{sec:soint}, we show that in the monotone case,
the Shepp--Olkin interpolation is a constant velocity path in the sense
of Definition~\ref{def:modtransp}.
In Proposition~\ref{prop:alphaMonotonicity} we show that the
$k$-monotone Condition \ref{cond:kmon}
is automatically satisfied in this case. Similarly Proposition~\ref
{prop:GLC} shows that
GLC, Condition \ref{cond:glc}, also holds for all Shepp--Olkin
interpolations in this context.

Unfortunately $t$-monotonicity, Condition \ref{cond:tmon},
does not hold for all (monotone) Shepp--Olkin interpolations.
However, Theorem~\ref{thm:cond1-4} weakens the assumptions of Theorem~\ref{th:ConvexityEntropy}, by
proving that entropy remains concave if
we replace Condition~\ref{cond:tmon} by Condition \ref{cond:delta}, which
is less restrictive, although less transparent in nature.
We complete the proof of
the monotone Shepp--Olkin conjecture by showing in Lemma~\ref
{lem:checkcond4} that Condition \ref{cond:delta}
is satisfied in this case. The proof uses
properties of Bernoulli sum mass functions, including a ``cubic''
inequality Theorem~\ref{th:Symmetric}. In Section~\ref{sec:soconc}, we therefore prove the
main result of this paper:

\begin{theorem}[(Monotone Shepp--Olkin)] \label{thm:SObis}
Consider the entropy of\break $(f_k)_{k = 0, 1, \ldots, n}$, defined by
\[
H(p_1, \ldots, p_n) := - \sum
_{k=0}^n f_k \log(f_k).
\]
If $p_1, \ldots, p_n \dvtx [0,1] \rightarrow[0,1]$ are affine functions with
$p_i' \geq0$ for all $i$, then the function
\[
H\dvtx [0,1]\rightarrow\re ,\qquad  t \mapsto H\bigl(p_1(t),\ldots,
p_n(t)\bigr)
\]
is concave in $t$.
\end{theorem}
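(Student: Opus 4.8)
The plan is to deduce Theorem~\ref{thm:SObis} from the machinery set up in the earlier sections, following the roadmap sketched in the introduction. The starting point is to observe that, in the monotone case $p_i' \geq 0$, the Shepp--Olkin interpolation $(f_k(t))$ is a constant speed path in the sense of Definition~\ref{def:modtransp}; this is established in Section~\ref{sec:soint}, where the optimal coefficients $\vd{\alpha}(t) \in \AAA$ are identified explicitly in terms of the $p_i$ and $p_i'$. Given this, the task reduces to verifying the hypotheses that make entropy concave. By Proposition~\ref{prop:alphaMonotonicity} the $k$-monotone Condition~\ref{cond:kmon} holds automatically, and by Proposition~\ref{prop:GLC} the GLC Condition~\ref{cond:glc} holds for every monotone Shepp--Olkin interpolation. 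If the $t$-monotone Condition~\ref{cond:tmon} also held, Theorem~\ref{th:ConvexityEntropy} would give the result immediately; the difficulty, as flagged in the introduction, is precisely that $t$-monotonicity can fail.

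\medskip

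To circumvent this I would invoke the strengthened concavity criterion, Theorem~\ref{thm:cond1-4}, which replaces Condition~\ref{cond:tmon} by the weaker (though less transparent) Condition~\ref{cond:delta}. So the logical skeleton of the proof of Theorem~\ref{thm:SObis} is: (i) cite the constant-speed-path representation from Section~\ref{sec:soint}; (ii) cite Conditions~\ref{cond:kmon} and~\ref{cond:glc} from Propositions~\ref{prop:alphaMonotonicity} and~\ref{prop:GLC}; (iii) cite Lemma~\ref{lem:checkcond4}, which verifies Condition~\ref{cond:delta} for the monotone Shepp--Olkin family; (iv) apply Theorem~\ref{thm:cond1-4} to conclude that $t \mapsto H(p_1(t),\ldots,p_n(t))$ is concave at every $t \in [0,1]$, hence concave on $[0,1]$. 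Since concavity is a pointwise-in-$t$ second-derivative statement, it suffices to check the hypotheses at an arbitrary fixed $t = t^*$, which is how the conditions are phrased.

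\medskip

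The main obstacle is step (iii): the verification of Condition~\ref{cond:delta} in Lemma~\ref{lem:checkcond4}. Unlike the $k$-monotonicity and GLC conditions, which follow from fairly structural features of Bernoulli sums (log-concavity-type inequalities and the explicit form of $\vd{\alpha}$), Condition~\ref{cond:delta} is an inequality involving the $t$-derivatives $\partial \alpha_k/\partial t$ balanced against the mass function and its discrete derivatives, and it does \emph{not} reduce to a sign condition on each $\partial \alpha_k / \partial t$ separately. Establishing it requires exploiting finer identities for Bernoulli sum mass functions --- in particular the `cubic' inequality of Theorem~\ref{th:Symmetric} --- together with the precise dependence of the optimal $\alpha_k$ on the parameters $p_i(t)$. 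I would expect this to be where essentially all the real work lies: one must track how the failure of $t$-monotonicity at some coordinates is compensated, in the aggregate quantity controlling $H''(t)$, by the GLC surplus and the cubic inequality, so that the net second derivative of entropy stays non-positive. The remaining steps are bookkeeping: assembling the cited results and appealing to Theorem~\ref{thm:cond1-4}.
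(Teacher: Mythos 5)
Your proposal follows exactly the paper's own route: represent the monotone Shepp--Olkin interpolation as a constant speed path (Proposition~\ref{prop:alphaformula}), verify Conditions~\ref{cond:kmon} and~\ref{cond:glc} via Propositions~\ref{prop:alphaMonotonicity} and~\ref{prop:GLC}, establish Condition~\ref{cond:delta} via Lemma~\ref{lem:checkcond4} using the cubic inequality Theorem~\ref{th:Symmetric}, and conclude with Theorem~\ref{thm:cond1-4}. You also correctly locate the real work in the verification of Condition~\ref{cond:delta}, which the paper handles by reducing it to the positivity of a quadratic form in the $p_i'$ with coefficients controlled by the cubic inequalities of Appendix~\ref{sec:technical}.
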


\section{Geodesics for continuous random variables} \label{sec:geodesics}
%
\subsection{General framework for concavity of entropy}
In Section~\ref{sec:geodesics} we restate results concerning entropy
and geodesics for random variables on $\re$, using the following
differential equation framework, where the form of (\ref{eq:ghPDE})
motivates equations~(\ref{eq:modified}) and (\ref{eq:hkdef}):

\begin{theorem} \label{th:EntroContinuous}
Let $(f_t(x))_{t \in[0,1]}$ be a smooth family of positive probability
densities on $\re$, such that the entropy $H(t):= -\int_{\re} f_t(x)
\log(f_t(x)) \,dx $ exists for all $t$.
Consider the families of functions $(g_t(x))_{t \in[0,1]}$ and
$(h_t(x))_{t \in[0,1]}$ which satisfy
%
\begin{equation}
\label{eq:ghPDE} \frac{\partial f_t(x)}{\partial t} = - \frac{\partial
g_t(x)}{\partial
x},\qquad \frac{\partial^2 f_t(x)}{\partial t^2} =
\frac{\partial^2 h_t(x)}{\partial x^2}.
\end{equation}
Under technical conditions, such as those listed in Remark~\ref{rem:tech},
the entropy $H(t) $
satisfies
%
\begin{eqnarray}
\label{eq:2derent} H''(t) &=& -\int_{\re}
\biggl( h_t(x) - \frac{g_t(x)^2}{f_t(x)} \biggr) \frac{\partial^2}{\partial x^2} \bigl(
\log\bigl(f_t(x)\bigr) \bigr) \,dx
\nonumber
\\[-8pt]
\\[-8pt]
\nonumber
&&{} - \int_{\re}f_t(x) \biggl(\frac{\partial}{\partial x}
\biggl(\frac
{g_t(x)}{f_t(x)} \biggr) \biggr)^2 \,dx.
\end{eqnarray}
\end{theorem}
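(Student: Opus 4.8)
The plan is to compute $H''(t)$ directly by differentiating $H(t) = -\int_{\re} f_t(x) \log f_t(x)\, dx$ twice in $t$ and then using the two identities in \eqref{eq:ghPDE} to trade $t$-derivatives for $x$-derivatives, so that integration by parts can be applied. First I would differentiate once to get $H'(t) = -\int_{\re} \partial_t f_t \,(\log f_t + 1)\, dx = -\int_{\re} (\partial_t f_t) \log f_t\, dx$, the $+1$ term vanishing since $\int \partial_t f_t\, dx = \partial_t \int f_t\, dx = 0$. Differentiating again produces two pieces:
\begin{equation*}
H''(t) = -\int_{\re} (\partial_t^2 f_t)\, \log f_t\, dx - \int_{\re} \frac{(\partial_t f_t)^2}{f_t}\, dx.
\end{equation*}

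Next I would substitute the PDEs. In the first integral, $\partial_t^2 f_t = \partial_x^2 h_t$, so integrating by parts twice in $x$ (the boundary terms vanishing under the technical conditions alluded to in Remark \ref{rem:tech}) gives $-\int_{\re} h_t\, \partial_x^2(\log f_t)\, dx$. For the second integral I would write $(\partial_t f_t)^2/f_t = (\partial_x g_t)^2/f_t$ and massage it toward the target form. The cleanest route is to introduce the velocity field $u_t := g_t/f_t$, so $g_t = u_t f_t$ and $\partial_x g_t = (\partial_x u_t) f_t + u_t \partial_x f_t$. Then
\begin{equation*}
\frac{(\partial_x g_t)^2}{f_t} = f_t (\partial_x u_t)^2 + 2 u_t (\partial_x u_t)(\partial_x f_t) + \frac{u_t^2 (\partial_x f_t)^2}{f_t}.
\end{equation*}
The first term on the right is exactly the second term in \eqref{eq:2derent} (up to sign). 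I would combine the remaining two terms: $2 u_t (\partial_x u_t)(\partial_x f_t) = \partial_x(u_t^2)\,\partial_x f_t$, and after an integration by parts this becomes $-\int u_t^2\, \partial_x^2 f_t\, dx$; meanwhile $\int u_t^2 (\partial_x f_t)^2 / f_t\, dx$ can be rewritten using $(\partial_x f_t)^2/f_t = \partial_x f_t \cdot \partial_x(\log f_t)$ and another integration by parts. Collecting these, the non-$(\partial_x u_t)^2$ contributions should assemble into $-\int u_t^2 f_t\, \partial_x^2(\log f_t)\, dx = -\int (g_t^2/f_t)\, \partial_x^2(\log f_t)\, dx$, which merges with the $h_t$ term to give the first integral in \eqref{eq:2derent}.

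The main obstacle is bookkeeping rather than conceptual: one must verify that all the boundary terms from the several integrations by parts vanish (this is where the positivity of $f_t$, the decay conditions, and smoothness in Remark \ref{rem:tech} enter), and one must track the algebraic identity $\partial_x^2(\log f) = \partial_x^2 f / f - (\partial_x f / f)^2$ carefully so that the terms recombine correctly without a leftover $(\partial_x f/f)^2$ discrepancy. I would double-check the final identity by testing it on a Gaussian heat flow, where both sides can be evaluated in closed form, to make sure no factor of $2$ or sign has been dropped. Once the boundary terms are controlled and the algebra checked, \eqref{eq:2derent} follows.
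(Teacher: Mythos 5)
Your proposal follows essentially the same route as the paper's proof: differentiate twice, substitute the two PDEs, integrate the $h_t$ term by parts twice, and expand $(\partial_x g_t)^2/f_t$ via the velocity field $u_t = g_t/f_t$ so that the cross term $2u_t(\partial_x u_t)(\partial_x f_t)=\partial_x(u_t^2)\partial_x f_t$, after a single integration by parts, recombines with the $u_t^2(\partial_x f_t)^2/f_t$ term through the identity $\partial_x^2(\log f)=\partial_x^2 f/f-(\partial_x f/f)^2$. The one slip is a sign in your penultimate display: the non-$(\partial_x u_t)^2$ contributions assemble to $+\int (g_t^2/f_t)\,\partial_x^2(\log f_t)\,dx$ rather than $-\int (g_t^2/f_t)\,\partial_x^2(\log f_t)\,dx$, which is precisely what is needed to produce the combination $-(h_t - g_t^2/f_t)$ in \eqref{eq:2derent}; with that corrected (and the boundary terms handled as you indicate via Remark \ref{rem:tech}), your argument coincides with the paper's.
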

\begin{pf} The two conditions listed in part (a) of Remark~\ref
{rem:tech} are those required under Leibniz's rule for differentiation
under the integral sign, yielding
%
\begin{eqnarray}
\label{eq:twoderscont} H''(t) &=& - \int
_{\re} \frac{\partial^2 f_t(x)}{\partial t^2} \log \bigl(f_t(x)\bigr)\,dx
- \int_{\re} \frac{1}{f_t(x)} \biggl(\frac{\partial f_t(x)}{\partial
t}
\biggr)^2 \,dx
\nonumber
\\[-8pt]
\\[-8pt]
\nonumber
&=& -\int_{\re} \frac{\partial^2 h_t(x)}{\partial x^2} \log\bigl(f_t(x)
\bigr)\,dx- \int_{\re} \frac{1}{f_t(x)} \biggl(
\frac{\partial g_t(x)}{\partial
x} \biggr)^2 \,dx.
\end{eqnarray}
Here, by part (b) of Remark~\ref{rem:tech} $g_t(x)$ vanishes at $x =
\pm\infty$, meaning that the
term $\int_{\re} \frac{\partial f_t(x)}{\partial t} \,dx = 0$.
Using the quotient rule
we can write the second term in equation~(\ref{eq:twoderscont}) as
\begin{eqnarray*}
 &&- \int_{\re} \frac{1}{f_t(x)} \biggl(
\frac{\partial
g_t(x)}{\partial x} \biggr)^2 \,dx
\\
&&\qquad =  -\int_\re \biggl( \frac{\partial f_t(x)}{\partial x}
\biggr)^2 \frac{ g_t(x)^2 }{f_t(x)^3}
\\
& &\qquad\quad{} + \frac{\partial f_t(x)}{\partial x} \biggl[ 2 \frac{ g_t(x)
}{f_t(x)} \frac{\partial}{\partial x} \biggl(
\frac{ g_t(x)}{f_t(x)} \biggr) \biggr] + f_t(x) \biggl(
\frac{\partial}{\partial x} \biggl( \frac{
g_t(x)}{f_t(x)} \biggr) \biggr)^2 \,dx
\\
&& \qquad = -\int_\re \biggl( \frac{\partial f_t(x)}{\partial x}
\biggr)^2 \frac{
g_t(x)^2 }{f_t(x)^3}
\\
& &\qquad\quad{} - \frac{\partial^2 f_t(x)}{\partial x^2} \biggl( \frac{
g_t(x)}{f_t(x)} \biggr)^2 \,dx +
f_t(x) \biggl( \frac{\partial}{\partial x} \biggl( \frac{
g_t(x)}{f_t(x)} \biggr)
\biggr)^2 \,dx
\end{eqnarray*}
since we recognize the term in square brackets as a perfect derivative,
and integrate by parts. The remaining conditions
listed in Remark~\ref{rem:tech}(b) justify the necessary integrations
by parts to prove the theorem.
\end{pf}

\begin{remark} \label{rem:tech}
We assume, for example, that the following technical conditions hold:
\begin{longlist}[(a)]
\item[(a)] there exist integrable $\theta_A(x), \theta_B(x)$ such that
for all $t,x$,
$\llvert  \frac{ \partial g_t(x)}{\partial x} (1 + \log(f_t(x)) )
\rrvert  \leq\theta_A(x)$ and
$\llvert  \frac{ \partial^2 h_t(x)}{\partial x^2} \log(f_t(x))
+  ( \frac{ \partial g_t(x)}{\partial x}  )^2 \frac{1}{f_t(x)}
\rrvert  \leq\theta_B(x)$;
\item[(b)]
for each $t \in[0,1]$, functions $g_t(x)$,
$\frac{ \partial h_t(x)}{\partial x} \log(f_t(x))$, $
\frac
{h_t(x)}{f_t(x)} \frac{ \partial f_t(x)}{\partial x} $ and $
( \frac{ g_t(x)}{f_t(x)}  )^2 \frac{ \partial f_t(x)}{\partial x}
$ vanish at $x = \pm\infty$.
\end{longlist}
\end{remark}

Section~\ref{sec:geodesics} aims to motivate results in the case where
all random variables have support on a finite set, so the required
differentiation formulas are automatic. For this reason, we do not
discuss the question of verification of the technical conditions of
Remark~\ref{rem:tech}.

In some sense, an extreme example
for which we can apply Theorem~\ref{th:EntroContinuous} is the following:

\begin{example} \label{ex:conttran}
Consider the translation of probability density $f_0$, where
$f_t(x) := f_0(x-vt)$
for some constant velocity $v>0$. It is then easy to see that $g_t(x) =
v f_t(x)$ and $h_t(x) = v^2 f_t(x)$. Theorem~\ref{th:EntroContinuous}
then confirms shift invariance and makes the entropy $H(t)$ of $f_t$ constant.
\end{example}

\subsection{Benamou--Brenier formula} \label{sec:benbren}

The study of geodesics interpolating between continuous probability
densities exploits properties
of the quadratic Wasserstein distance $W_2$, which (see \cite
{benamou2,benamou,ambrosio})
has a variational characterisation involving velocity fields, given by
the Benamou--Brenier formula (\ref{eq:benbren}) below.

\begin{definition} \label{def:velfield}
Consider fixed smooth distribution functions $F_0$ and $F_1$.
Write $\PP_{\re}(F_0,F_1)$ for the set of probability densities
$f_t(x)$, with corresponding distribution functions
$F_t(x) = \int_{-\infty}^x f_t(y) \,dy$ satisfying constraints $F_t(x)
|_{t=0} = F_0(x)$
and $F_t(x) |_{t=1} = F_1(x)$. Then given any sequence $f_t \in\PP
_{\re
}(F_0,F_1)$, we refer to a function
$v_t$ as a velocity
field if it satisfies
%
\begin{equation}
\label{eq:heateqn} \frac{\partial}{\partial t} f_t(x) + \frac{\partial}{\partial x}
\bigl( v_t(x) f_t(x) \bigr) = 0.
\end{equation}
\end{definition}

Ambrosio, Gigli and Savar\'{e} \cite{ambrosio}, Section~8, give a
careful analysis of conditions under which this type of continuity
equation holds. They consider (see \cite{ambrosio}, Definition~1.1.1)
the class of absolutely continuous curves $\mu_t \in{\mathcal
P}_p(X)$, the set of probability measures with finite $p$th moment on
separable Hilbert space $X$. Theorem~8.3.1 of \cite{ambrosio} shows
that for $p > 1$, a version of equation (\ref{eq:heateqn}) holds for
$\mu_t$ in this class, in fact,
\[
\frac{\partial}{\partial t} \mu_t + \nabla\cdot ( v_t
\mu_t ) = 0,
\]
in the sense of distributions (using the class of smooth cylindrical
test functions).

Further, Theorem~8.3.1 of \cite{ambrosio} shows that under these
conditions the resulting velocity field has $L^p(\mu_t)$ norm dominated
by the metric derivative $| \mu'|(t)$ (as defined in \cite{ambrosio}, equation
(1.1.3)). Using properties of so-called length spaces, this
allows the following formula, first proved by Benamou and Brenier \cite
{benamou2,benamou} for probability measures on $X = \re^d$, to be
recovered for separable Hilbert spaces $X$. For comparison purposes, we
state this Benamou--Brenier formula for the case of $X = \re$:

\begin{theorem}[(\cite{benamou2,benamou})]
Using the notation of Definition~\ref{def:velfield} above, the
quadratic Wasserstein distance is given by
%
\begin{eqnarray}
\label{eq:benbren} W_2^2(F_0,F_1) &
= & \inf_{f_t \in\PP_{\re}(F_0,F_1)} \int_0^1
\biggl( \int_{-\infty}^{\infty} f_t(y)
v_t(y)^2 \,dy \biggr) \,dt
\\
& = & \inf_{f_t \in\PP_{\re}(F_0,F_1)} \int_0^1
\biggl( \int_{-\infty
}^{\infty} \biggl( \frac{\partial F_t}{\partial t}(y)
\biggr)^2 \frac{1}{f_t(y)} \,dy \biggr) \,dt. \label{eq:altbenbren}
\end{eqnarray}
\end{theorem}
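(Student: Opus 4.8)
The plan is to prove the formula in the one-dimensional case by passing to quantile functions, where the variational problem decouples fiber-by-fiber. Before addressing the main identity, I would first record that the two expressions \eqref{eq:benbren} and \eqref{eq:altbenbren} agree integrand-by-integrand. Integrating the continuity equation \eqref{eq:heateqn} in the spatial variable from $-\infty$ to $x$, and using that $v_t f_t$ vanishes at $-\infty$, gives $\frac{\partial F_t}{\partial t}(x) = - v_t(x) f_t(x)$. Substituting this yields $f_t(x) v_t(x)^2 = \frac{1}{f_t(x)}\left(\frac{\partial F_t}{\partial t}(x)\right)^2$, so the two infima coincide and it suffices to identify either one with $W_2^2(F_0,F_1)$.

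For the main identity I would exploit the explicit one-dimensional representation $W_2^2(F_0,F_1) = \int_0^1 \left(F_0^{-1}(u) - F_1^{-1}(u)\right)^2 du$ in terms of quantile functions. Given any admissible path $f_t \in \PP_{\re}(F_0,F_1)$, set $G_t := F_t^{-1}$ and change variables $y = G_t(u)$, $u = F_t(y)$, $du = f_t(y)\, dy$, in the action \eqref{eq:altbenbren}. Differentiating the identity $F_t(G_t(u)) = u$ in $t$ gives $\frac{\partial F_t}{\partial t}(G_t(u)) = - f_t(G_t(u)) \frac{\partial G_t}{\partial t}(u)$, and after substitution the spatial integral collapses to $\int_0^1 \left(\frac{\partial G_t}{\partial t}(u)\right)^2 du$. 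Thus the whole action becomes $\int_0^1 \int_0^1 \left(\frac{\partial G_t}{\partial t}(u)\right)^2 du\, dt$, an ordinary kinetic energy in the quantile variable.

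The lower bound then follows fiber-wise: for each fixed $u$, the map $t \mapsto G_t(u)$ is a path in $\re$ joining $F_0^{-1}(u)$ to $F_1^{-1}(u)$, so Cauchy--Schwarz gives $\int_0^1 \left(\frac{\partial G_t}{\partial t}(u)\right)^2 dt \geq \left(F_1^{-1}(u) - F_0^{-1}(u)\right)^2$, with equality exactly when this path is affine in $t$. Integrating over $u \in [0,1]$ recovers $W_2^2(F_0,F_1)$ as a lower bound for the action, and equality is attained by the displacement interpolation whose quantile function is $G_t(u) = (1-t) F_0^{-1}(u) + t F_1^{-1}(u)$; one checks this path lies in $\PP_{\re}(F_0,F_1)$ and satisfies \eqref{eq:heateqn} with an admissible velocity field, which closes the argument.

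The main obstacle is not the computation but its regularity justification: ensuring positivity and smoothness of $f_t$ so that $F_t$ is a genuine increasing diffeomorphism whose inverse $G_t$ is differentiable, verifying the vanishing conditions needed to integrate \eqref{eq:heateqn} in space, and confirming that the displacement interpolation produces a valid velocity field. In the finite-support regime that motivates this paper these integrability questions are, as the authors note, automatic, so I would treat the quantile computation as the conceptual core and defer the general measure-theoretic technicalities to the analysis of Ambrosio, Gigli and Savar\'{e} \cite{ambrosio}.
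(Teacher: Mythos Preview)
The paper does not prove this theorem at all: it is quoted as a known result of Benamou and Brenier \cite{benamou2,benamou}, with the surrounding discussion pointing to \cite[Theorem~8.3.1]{ambrosio} for the general continuity-equation framework. So there is no ``paper's own proof'' to compare against.

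That said, your quantile-based argument is a correct and standard route to the one-dimensional Benamou--Brenier formula. The key steps --- integrating the continuity equation to get $\partial_t F_t = - v_t f_t$, changing variables $y = F_t^{-1}(u)$ to turn the action into $\int_0^1\!\int_0^1 (\partial_t G_t(u))^2\,du\,dt$, and applying Cauchy--Schwarz in $t$ fiber-by-fiber --- are all sound, and the identification of the minimiser with displacement interpolation is exactly right. Your closing caveat about regularity is also well placed: the formal manipulation requires $F_t$ to be strictly increasing and jointly smooth enough for $G_t$ to be differentiable in $t$, and the attainment step requires the displacement interpolant to have a density, which in one dimension needs $F_0$ and $F_1$ atomless. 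These are genuine technical points, but as you note the paper's own focus on finitely supported measures means they are not the issue here, and deferring them to \cite{ambrosio} is appropriate.
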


\begin{corollary} \label{cor:geod}
If $(f_t)_{t \in[0,1]}$ is a solution to the minimization
problem \eqref{eq:benbren}, then assuming
the technical conditions of Remark~\ref{rem:tech} hold, we can write
%
\begin{equation}
\label{eq:velchange} H''(f_t) = - \int
_{\re} f_t(x) \biggl( \frac{\partial}{\partial x}
v_t(x) \biggr)^2 \,dx \leq0,
\end{equation}
and the inner integral of (\ref{eq:benbren}), $\int_{-\infty
}^{\infty}
f_t(x) v_t(x)^2 \,dx$, is constant in $t$.
\end{corollary}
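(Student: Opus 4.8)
\textbf{Proof proposal for Corollary \ref{cor:geod}.}

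The plan is to deduce this directly from Theorem \ref{th:EntroContinuous} together with the variational characterisation of the minimiser in the Benamou--Brenier formula \eqref{eq:benbren}. The key observation is that a minimiser of \eqref{eq:benbren} is a constant-speed geodesic for $W_2$, and for such a curve the velocity field $v_t$ is a gradient and evolves by the pressureless Euler equation, which will allow us to identify $h_t$ explicitly. First I would set $g_t(x) = v_t(x) f_t(x)$, so that the first equation of \eqref{eq:ghPDE} is exactly the continuity equation \eqref{eq:heateqn}. Then $g_t/f_t = v_t$, and the second integral in \eqref{eq:2derent} is already the right-hand side of \eqref{eq:velchange}. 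It therefore remains to show that the first integral in \eqref{eq:2derent} vanishes, i.e.\ that one may choose $h_t = g_t^2/f_t = v_t^2 f_t$.

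To see this, I would differentiate the continuity equation \eqref{eq:heateqn} in $t$ to obtain
\begin{equation*}
\frac{\partial^2 f_t}{\partial t^2} = - \frac{\partial}{\partial x}\left( \frac{\partial v_t}{\partial t} f_t + v_t \frac{\partial f_t}{\partial t} \right) = - \frac{\partial}{\partial x}\left( \frac{\partial v_t}{\partial t} f_t \right) + \frac{\partial}{\partial x}\left( v_t \frac{\partial}{\partial x}(v_t f_t) \right).
\end{equation*}
The Euler--Lagrange equation for the minimisation \eqref{eq:benbren} (equivalently, the geodesic equation in Wasserstein space; see \cite{benamou2,benamou,ambrosio}) states that along a minimiser the velocity field satisfies $\partial_t v_t + v_t \partial_x v_t = 0$. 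Substituting $\partial_t v_t = - v_t \partial_x v_t$ into the display above, a short computation (expanding $\partial_x(v_t\partial_x(v_tf_t)) - \partial_x(v_t (\partial_x v_t) f_t)$) collapses the right-hand side to $\partial_x^2(v_t^2 f_t) = \partial_x^2(h_t)$ with $h_t = v_t^2 f_t = g_t^2/f_t$. Hence the bracket $h_t - g_t^2/f_t$ in \eqref{eq:2derent} is identically zero, the first integral drops out, and \eqref{eq:velchange} follows. Finally, constancy of the inner integral $\int f_t v_t^2\,dx$ in $t$ is equivalent, after differentiating under the integral sign, to $\frac{d}{dt}\int f_t v_t^2\,dx = \int \partial_t f_t\, v_t^2\,dx + 2\int f_t v_t \partial_t v_t\,dx = 0$; using the continuity equation on the first term (integrating by parts, with boundary terms vanishing by Remark \ref{rem:tech}(b)) and the Euler equation $\partial_t v_t = -v_t\partial_x v_t$ on the second, both contributions cancel. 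Alternatively, this constancy is immediate from the fact that a minimiser of \eqref{eq:benbren} must equidistribute the cost in $t$, since otherwise a time-reparametrisation would strictly decrease the integral.

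The main obstacle is justifying the use of the Euler--Lagrange/geodesic equation $\partial_t v_t + v_t\partial_x v_t = 0$ for the minimiser, and the attendant regularity and integration-by-parts steps; in full generality on $\re$ this requires the careful analysis of absolutely continuous curves in Wasserstein space referenced above \cite{ambrosio}. However, as the excerpt explicitly states, Section \ref{sec:geodesics} is only motivational for the finitely-supported discrete setting where all these manipulations are automatic, so I would invoke the technical conditions of Remark \ref{rem:tech} and not belabour the verification. Everything else is routine substitution into Theorem \ref{th:EntroContinuous}.
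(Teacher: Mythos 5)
Your argument is correct and follows essentially the same route as the paper: identify $g_t = v_t f_t$ and, via the Euler equation $\partial_t v_t = -v_t\partial_x v_t$ from \cite{benamou2}, derive $\partial_t^2 f_t = \partial_x^2(v_t^2 f_t)$ so that $h_t = v_t^2 f_t = g_t^2/f_t$ kills the first integral in Theorem \ref{th:EntroContinuous}. The paper handles the constancy of $\int f_t v_t^2\,dx$ by exhibiting $\partial_t(f_t v_t^2)$ as a perfect $x$-derivative (namely $-\partial_x(f_t v_t^3)$), which is the same cancellation you perform after differentiating under the integral sign.
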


\begin{pf}
It is shown in \cite{benamou2}, equation (1.14), that if $(f_t)_{t \in
[0,1]}$ is a solution to the minimization problem \eqref{eq:benbren},
then its associated velocity field $v_t(x)$ is, at least formally, a
solution to the equation
%
\begin{equation}
\label{eq:vder} \frac{\partial v_t(x)}{\partial t} = - \frac{\partial
v_t(x)}{\partial
x} v_t(x).
\end{equation}
Taking a further time derivative of \eqref{eq:heateqn} and using
\eqref
{eq:vder},
we deduce a second-order PDE,
%
\begin{eqnarray}
\label{eq:2ndorder} \frac{\partial^2 f_t(x)}{\partial t^2} & = & - \frac{\partial}{\partial x} \biggl(
\frac{\partial v_t}{\partial
t}(x) f_t(x) + v_t(x)
\frac{\partial f_t}{\partial t}(x) \biggr)
\nonumber
\\
& = & \frac{\partial}{\partial x} \biggl( \frac{\partial
v_t(x)}{\partial x} v_t(x)
f_t(x) + v_t(x) \frac{\partial}{\partial x} \bigl(
v_t(x) f_t(x) \bigr) \biggr)
\\
& = & \frac{\partial^2}{\partial x^2} \bigl(v_t(x)^2
f_t(x) \bigr)\nonumber
\end{eqnarray}
(assuming the $t$ and $x$ derivatives can be exchanged).
In the notation of Theorem~\ref{th:EntroContinuous}, we can rewrite equations
(\ref{eq:heateqn}) and (\ref{eq:2ndorder}) in the form $g_t(x) = v_t(x)
f_t(x)$ and $h_t(x) =
v_t(x)^2 f_t(x)$. This makes a clear analogy with the translation case,
Example~\ref{ex:conttran}, and equation (\ref{eq:velchange}) follows
by a straightforward application of Theorem~\ref{th:EntroContinuous}.
Similar calculations using equation (\ref{eq:velchange}) show that $\frac{\partial
}{\partial t}  ( f_t(x) v_t(x)^2  )
= \frac{\partial}{\partial x}  ( f_t(x) v_t(x)^3  )$, and the
result follows.
\end{pf}

This result can be seen as a particular case of results coming from
Sturm--Lott--Villani theory \cite{lott,sturm,sturm2}. This theory
establishes links between the behavior of the entropy functional along
Wasserstein $W_2$-geodesics on a given measured length space and bounds
on the Ricci curvature on this space. In particular, a Riemannian
manifold $(M,g)$ satisfies $\ric\geq0$, where $\ric$ is the Ricci
curvature tensor, if and only if for every absolutely continuous
Wasserstein $W_2$-geodesic $(\mu_t)_{t \in[0,1]}:= (f_t\, d\vol)_{t
\in
[0,1]}$ the entropy function $H(t) := -\int_M f_t \log(f_t) \,d\vol$ is
concave in $t$. This equivalence is used to generalize the definition
of Ricci curvature bounds from the Riemannian framework to the
framework of measured length spaces; that is, metric spaces $(X,d)$ for
which the distance $d(x,y)$ is the infimum of lengths of curves joining
$x$ to $y$.

This theory can be developed to use transportation arguments to prove
probabilistic inequalities involving entropy, such as log-Sobolev,
transport and HWI inequalities. For example, Cordero--Erausquin \cite
{cordero3},
Corollaries~1, 2 and 3, gives simple proofs of these inequalities,
under the condition that relative density $f/\phi_{1/c}$ is log-concave
(in the continuous sense), where $\phi_{1/c}$ is a normal density with
variance $1/c$. This log-concavity condition is known to imply the
Bakry--\'{E}mery condition~\cite{bakry} (see, e.g., \cite
{cordero3,ane}), which is natural in this context. GLC, Condition \ref
{cond:glc}, is introduced as a discrete version of the log-concavity condition.
%
\subsection{Perturbed translations}

\begin{theorem}\label{th:PerturbedTranslation}
Let $(f_t(x))_{t \in[0,1]}$ be a smooth family of positive probability
densities on $\re$ and $(g_t(x))_{t \in[0,1]}$ be defined by
equation \eqref{eq:ghPDE}. If there exists a constant $v$ and a
nondecreasing function $\alpha(t)$ such that
%
\begin{equation}
\label{eq:glinear} g_t(x) = v f_t(x)+\alpha(t)
\frac{\partial}{\partial x} f_t(x),
\end{equation}
then assuming
the technical conditions of Remark~\ref{rem:tech} hold, the entropy
$H(t)$ of $f_t$ is a concave function of $t$.
\end{theorem}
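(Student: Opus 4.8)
The plan is to feed the hypothesis directly into Theorem~\ref{th:EntroContinuous}: the only genuine work is to identify the function $h_t$ attached to $f_t$ by the second equation of~\eqref{eq:ghPDE}, and then to simplify the combination $h_t - g_t^2/f_t$ that controls the sign of the first integral in~\eqref{eq:2derent}.

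\textbf{Step 1 (identify $h_t$).} First I would compute the low-order time derivatives. From~\eqref{eq:ghPDE} and~\eqref{eq:glinear},
\[
\frac{\partial f_t}{\partial t} = -\frac{\partial g_t}{\partial x} = -v\,\frac{\partial f_t}{\partial x} - \alpha(t)\,\frac{\partial^2 f_t}{\partial x^2}.
\]
Differentiating once more in $t$ and substituting this expression back into itself yields
\[
\frac{\partial^2 f_t}{\partial t^2} = \frac{\partial^2}{\partial x^2}\left( v^2 f_t + 2v\alpha(t)\,\frac{\partial f_t}{\partial x} - \alpha'(t) f_t + \alpha(t)^2 \frac{\partial^2 f_t}{\partial x^2} \right),
\]
so the representative of $h_t$ built out of $f_t$ and its spatial derivatives (hence the one decaying at $\pm\infty$, as required by Remark~\ref{rem:tech}) is $h_t = v^2 f_t + 2v\alpha(t)\,\partial_x f_t - \alpha'(t) f_t + \alpha(t)^2\,\partial_x^2 f_t$.

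\textbf{Step 2 (simplify $h_t - g_t^2/f_t$).} Next I would use $g_t/f_t = v + \alpha(t)\,\partial_x(\log f_t)$, whence $g_t^2/f_t = v^2 f_t + 2v\alpha(t)\,\partial_x f_t + \alpha(t)^2 (\partial_x f_t)^2/f_t$, so that
\[
h_t - \frac{g_t^2}{f_t} = -\alpha'(t) f_t + \alpha(t)^2\left( \frac{\partial^2 f_t}{\partial x^2} - \frac{(\partial_x f_t)^2}{f_t} \right) = f_t\left( -\alpha'(t) + \alpha(t)^2\,\frac{\partial^2}{\partial x^2}\log f_t \right),
\]
using $f_t\,\partial_x^2(\log f_t) = \partial_x^2 f_t - (\partial_x f_t)^2/f_t$. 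Substituting into~\eqref{eq:2derent}, the first integral becomes $\alpha'(t)\int_\re f_t\,\partial_x^2(\log f_t)\,dx - \alpha(t)^2\int_\re f_t\,(\partial_x^2\log f_t)^2\,dx$; integrating the first of these by parts, with boundary terms vanishing by Remark~\ref{rem:tech}(b), gives $\int_\re f_t\,\partial_x^2(\log f_t)\,dx = -\int_\re f_t\,(\partial_x\log f_t)^2\,dx$. Therefore
\[
H''(t) = -\alpha'(t)\int_\re f_t\left(\frac{\partial}{\partial x}\log f_t\right)^2 dx - \alpha(t)^2\int_\re f_t\left(\frac{\partial^2}{\partial x^2}\log f_t\right)^2 dx - \int_\re f_t\left(\frac{\partial}{\partial x}\frac{g_t}{f_t}\right)^2 dx,
\]
and every term is non-positive because $\alpha$ is non-decreasing, i.e. $\alpha'(t)\ge 0$. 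This gives $H''(t)\le 0$.

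\textbf{On the main obstacle.} There is no deep step here; once Theorem~\ref{th:EntroContinuous} is available the argument is bookkeeping. The one point requiring care is that $h_t$ is determined by~\eqref{eq:ghPDE} only up to an additive affine function of $x$, so one must select the decaying representative found in Step~1 for~\eqref{eq:2derent} to be applicable, alongside the routine verification of the conditions in Remark~\ref{rem:tech} that the paper does not dwell on in the continuous setting. It is worth emphasising that, in contrast with the discrete case, no log-concavity hypothesis on $f_t$ is needed: the rigid form~\eqref{eq:glinear} of $g_t$ is exactly what makes the potentially dangerous term $-\int_\re (h_t - g_t^2/f_t)\,\partial_x^2(\log f_t)\,dx$ collapse to a manifestly non-positive expression, while the pure translation part $vf_t$ contributes nothing at all (cf. Example~\ref{ex:conttran}).
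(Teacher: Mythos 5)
Your proposal is correct and follows essentially the same route as the paper: you identify the same $h_t(x) = v^2 f_t + 2v\alpha(t)\,\partial_x f_t + \alpha(t)^2\,\partial_x^2 f_t - \alpha'(t) f_t$ (the paper gets it by differentiating $g_t$ in $t$ rather than $\partial_t f_t$, an immaterial difference), derive the same identity $h_t - g_t^2/f_t = \alpha(t)^2 f_t\,\partial_x^2(\log f_t) - \alpha'(t) f_t$, and plug into Theorem~\ref{th:EntroContinuous} to reach the same manifestly non-positive expression for $H''(t)$. Your side remarks (the affine ambiguity in $h_t$ and the irrelevance of the translation part) are accurate but not needed.
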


\begin{pf} Using the facts that $\frac{\partial}{\partial t} f_t(x) = -
\frac{\partial}{\partial x} g_t(x)$, and hence
$\frac{\partial^2}{\partial x\, \partial t} f_t(x)= - \frac{\partial
^2}{\partial x^2} g_t(x)$,
we take a derivative of equation (\ref{eq:glinear}) to compute
\begin{eqnarray*}
\frac{\partial}{\partial t} g_t(x) 
&=& - v
\frac{\partial}{\partial x} \biggl( v f_t(x)+\alpha(t) \frac
{\partial}{\partial x}
f_t(x) \biggr)+ \alpha'(t) \frac{\partial
}{\partial x}
f_t(x)
\\
&&{} - \alpha(t) \frac{\partial^2}{\partial x^2} \biggl( v f_t(x)+\alpha(t)
\frac{\partial}{\partial x} f_t(x) \biggr),
\end{eqnarray*}
so the family of functions $h_t(x)$ defined by equation \eqref
{eq:ghPDE} is equal to
%
\begin{equation}
\label{eq:hexpress} h_t(x) = v^2 f_t(x)+2 v
\alpha(t) \frac{\partial}{\partial x} f_t(x) + \alpha(t)^2
\frac{\partial^2}{\partial x^2}f_t(x)-\alpha'(t) f_t(x).
\end{equation}
It is then easy to deduce that
%
\begin{equation}
h_t(x) - \frac{g_t(x)^2}{f_t(x)} = \alpha(t)^2
f_t(x) \biggl(\frac
{\partial^2}{\partial x^2} \log\bigl(f_t(x)\bigr)
\biggr) - \alpha'(t) f_t(x).
\end{equation}
Since in this case $\frac{\partial}{\partial x}  ( g_t(x)/f_t(x)
 ) = \alpha(t) \frac{\partial^2}{\partial x^2}  ( \log f_t(x)
 )^2$,
Theorem~\ref{th:EntroContinuous} gives that
%
\begin{eqnarray}
H''(t) &=& - 2\alpha(t)^2 \int
_{\re} \biggl(\frac{\partial
^2}{\partial
x^2} \log\bigl(f_t(x)
\bigr) \biggr)^2 f_t(x)
\nonumber
\\[-8pt]
\\[-8pt]
\nonumber
&&{}- \alpha'(t) \int
_{\re} \biggl(\frac
{\partial}{\partial x} \log\bigl(f_t(x)
\bigr) \biggr)^2 f_t(x),
\end{eqnarray}
which shows the concavity of $H(t)$.
\end{pf}

Among the consequences of Theorem~\ref{th:PerturbedTranslation} there
are the particular cases where $\alpha(t) \equiv0$, which is the
translation case of Example~\ref{ex:conttran}, and the case where $v =
0$ and $\alpha(t) = -c$ is a constant, in which case $f$ is a solution
to the heat equation $\frac{\partial f_t(x)}{\partial t} = c \frac
{\partial^2 f_t(x)}{\partial x^2}$.\vspace*{1pt}

Theorem~\ref{th:PerturbedTranslation} can be used to study the entropy
of an approximation of a Bernoulli sum by a Gaussian distribution of
the same mean and variance. This motivates the Shepp--Olkin conjecture,
due to the following result:

\begin{theorem} \label{th:SOCont}
Let $p_1,\ldots, p_n \dvtx [0,1] \rightarrow[0,1]$ be affine functions,
and let $\mu(t) := \sum_{i=1}^n p_i(t)$ and $V(t) := \sum_{i=1}^n
p_i(t)(1-p_i(t))$
be the mean and variance functions. Define
%
\begin{equation}
\label{eq:gaussapprox} f_t(x) := \frac{1}{\sqrt{2\pi V(t)}} \exp \biggl(-
\frac{(x- \mu(t))^2}{2
V(t)} \biggr).
\end{equation}
Then the entropy $H(t)$ of $f_t$
is a concave function of $t$.
\end{theorem}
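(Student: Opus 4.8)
The plan is to show that the Gaussian family $f_t$ defined in \eqref{eq:gaussapprox} is a perturbed translation in the sense of \eqref{eq:glinear}, so that Theorem \ref{th:PerturbedTranslation} applies directly. Concretely, I will exhibit the constant $v$ and the function $\alpha(t)$ explicitly, and the only thing that needs genuine checking is that $\alpha$ turns out to be non-decreasing.

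First I would record the derivatives of the Gaussian density. On the set where $V(t) > 0$, writing $f_t(x) = (2\pi V(t))^{-1/2} \exp\!\big(-(x-\mu(t))^2/(2V(t))\big)$, one has $\partial_x f_t = -\tfrac{x-\mu}{V} f_t$ and $\partial_x^2 f_t = \big(\tfrac{(x-\mu)^2}{V^2} - \tfrac1V\big) f_t$, while differentiating $\log f_t$ in $t$ gives
$$\frac{\partial f_t}{\partial t} = \left( -\frac{V'}{2V} + \frac{(x-\mu)\mu'}{V} + \frac{(x-\mu)^2 V'}{2V^2} \right) f_t .$$
On the other hand, the candidate $g_t := v f_t + \alpha(t)\,\partial_x f_t$ satisfies $-\partial_x g_t = \big( \tfrac{v(x-\mu)}{V} + \tfrac{\alpha}{V} - \tfrac{\alpha(x-\mu)^2}{V^2}\big) f_t$. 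Matching the coefficients of $1$, $(x-\mu)$ and $(x-\mu)^2$ between these two expressions forces $v = \mu'$ --- which is a genuine constant, since $\mu = \sum_i p_i$ is affine --- and $\alpha(t) = -V'(t)/2$, and (crucially) the two equations coming from the constant term and the quadratic term are mutually consistent, both yielding $\alpha = -V'/2$. Since $\partial_x f_t$ vanishes at $x = \pm\infty$, this $g_t$ is the unique function attached to $f_t$ by \eqref{eq:ghPDE}, so \eqref{eq:glinear} holds with $v = \mu'$ and $\alpha(t) = -V'(t)/2$.

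Next I would verify the monotonicity of $\alpha$, i.e. that $V$ is concave in $t$. For each $i$, affineness of $p_i$ gives $p_i'' = 0$, hence $\tfrac{d^2}{dt^2}\big(p_i(1-p_i)\big) = -\tfrac{d^2}{dt^2}(p_i^2) = -2(p_i')^2 \le 0$; summing over $i$ yields $V''(t) = -2\sum_{i=1}^n (p_i')^2 \le 0$, so $\alpha'(t) = -V''(t)/2 \ge 0$. Theorem \ref{th:PerturbedTranslation} (whose technical hypotheses from Remark \ref{rem:tech} are automatic for Gaussians, all relevant quantities decaying at $\pm\infty$) now gives that $H(t)$ is concave, completing the proof.

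The computations above are routine; the only substantive ingredient --- and the reason the Gaussian approximation inherits Shepp--Olkin-type concavity --- is that the variance function $V(t)$ is concave along affine parameter paths, which is exactly what makes the perturbation coefficient $\alpha(t) = -V'(t)/2$ monotone. I expect no real obstacle beyond bookkeeping, but if one wanted to be careful about the endpoints where some $p_i(t) \in \{0,1\}$ and $V(t)$ may vanish, one would restrict to the interior of $[0,1]$ or pass to a limit.
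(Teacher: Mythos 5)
Your proposal is correct and follows essentially the same route as the paper: identify $g_t = \mu' f_t - \tfrac{1}{2}V'(t)\,\partial_x f_t$, note $\alpha(t) = -V'(t)/2$ has $\alpha'(t) = \sum_i (p_i')^2 \ge 0$, and invoke Theorem~\ref{th:PerturbedTranslation}. The only difference is that you spell out the coefficient-matching computation that the paper leaves as ``differential properties satisfied by Gaussian kernels.''
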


\begin{pf}
Writing $v =\mu'(t)$, since $\mu''(t) = 0$, we can use differential
properties satisfied by Gaussian kernels to compute
\[
g_t(x) = \mu'(t) f_t(x) -
\frac{V'(t)}{2} \frac{\partial
f_t(x)}{\partial x} = v f_t(x) + \alpha(t)
\frac{\partial}{\partial x} f_t(x),
\]
where $\alpha(t) := -\frac{1}{2} V'(t)$. Since we have
$\frac{\partial}{\partial t} \alpha(t)= \sum_{i=1}^n p_i'^2 \geq0$,
we apply Theorem~\ref{th:PerturbedTranslation} to show that the entropy
$H(t)$ of $f_t$ is a concave function of $t$.
The conditions of Remark~\ref{rem:tech} can all be directly verified in
this case; the key is that $g_t(x)/f_t(x)$ is a linear function
of $x$, and $h_t(x)/f_t(x)$ is quadratic in $x$.
This argument works for any Gaussian densities of the form \eqref
{eq:gaussapprox}, where $V''(t) \leq0$, and $\mu(t)$ is an affine
function of $t$.
\end{pf}

\begin{remark}
It is possible to use the explicit expression for the entropy of a
Gaussian random variable to prove Theorem~\ref{th:SOCont}
directly. However, as there is no explicit expression for the entropy
of a sum of Bernoulli variables, it is not possible to adapt such a
proof in the discrete Shepp--Olkin case, and we require the assumption
that all $p_i' \geq0$ in that case.
\end{remark}

\section{Discrete gradient field models} \label{sec:discrete}
%
\subsection{Motivating example and discrete Benamou--Brenier formula}
We now show how natural choices of paths connecting probability mass functions
on the integers can
be viewed in the gradient field framework of Section~\ref{sec:geodesics}. We give a new
perspective on the time
derivative using a series of functions $\alpha_k(t)$, where
$k = 0, \ldots, n$ and $0 \leq t \leq1$. Recall that we use the left
derivative map
$\nabla_1$ defined by
$ \nabla_1 f_k = f_{k} -f_{k-1}$ for any function $f$, and write
$\nabla
_2 =  ( \nabla_1  )^2$ for the
map taking $\nabla_2 f_k = f_k - 2 f_{k-1} + f_{k-2}$. Write $\nabla
_1^*$, defined by
$\nabla_1^* f_k = f_{k} - f_{k+1}$, for its adjoint (with respect to
counting measure).
Recall $\AAA$ denotes the set of measurable
functions ${\bolds{\alpha}}(t) = (\alpha_0(t), \alpha_1(t),
\ldots,
\alpha_n(t))$,
where $\alpha_0(t) \equiv0$ and $\alpha_n(t) \equiv1$,
and $0 \leq\alpha_k(t) \leq1$ for all $k$ and $t$.
We first give a motivating example, which is a special case of the
Shepp--Olkin interpolation.

\begin{example} \label{ex:binomialint}
We write $\operatorname{Bin}_{k}(n, p) := {n\choose k} p^k (1-p)^{n-k}$
for the probability mass function of a binomial with parameters $n$ and $p$.
For fixed $n$ and $0 \leq p < q \leq1$, define $p(t) = p(1-t) + q t$,
and write
$f_k(t) = \operatorname{Bin}_{k}(n, p(t))$
for the probability mass functions which interpolate in the natural way
in the parameter space.
A~simple calculation
(see, e.g., \cite{mateev} and \cite{shepp})
shows that for any $k= 0, 1, \ldots, n$,
%
\begin{equation}
\frac{ \partial f_k}{\partial t}(t) 
=  -
\nabla_1 \bigl( n(q-p) \operatorname{Bin}_{k}\bigl(n-1,
p(t)\bigr) \bigr). \label
{eq:gradient}
\end{equation}
We reformulate equation (\ref{eq:gradient}) using an insight of Yu
\cite{yu},
who defined the hypergeometric thinning operation, observing in \cite{yu},
Lemma~2, that for any $n$, $p$,
%
\begin{equation}
\operatorname{Bin}_{k}(n-1, p) = \frac{ (k+1)}{n} \operatorname
{Bin}_{k+1}(n, p) + \biggl( 1 - \frac
{k}{n} \biggr)
\operatorname{Bin}_{k}(n, p).
\end{equation}
This suggests that we rewrite equation (\ref{eq:gradient}) in the form,
modeled on (\ref{eq:heateqn}),
%
\begin{eqnarray}
0 =  \frac{ \partial f_k}{\partial t}(t) + \nabla_1 \bigl( \bigl(n(q-p) \bigr)
g^{({\bolds{\alpha}})}_k(t) \bigr)\qquad \mbox{for $k= 0, 1, \ldots, n$,}
\label{eq:grad1}
\end{eqnarray}
for
\begin{equation}\qquad g^{({\bolds{\alpha}})}_k(t) =  \alpha_{k+1}(t)
f_{k+1}(t) + \bigl(1-\alpha_k(t)\bigr) f_k(t)\qquad
\mbox{for $k= 0, 1, \ldots , n-1$,}\label{eq:grad2}
\end{equation}
with $\alpha_k(t) = k/n$ for all $k$ and $t$.
\end{example}

The form of equations (\ref{eq:grad1}) and (\ref{eq:grad2}) suggests a
version of the
Benamou--Brenier formula \cite{benamou2,benamou} for discrete random variables.

\begin{definition} \label{def:ourbb}
We write $\PP_{\Z}( f(0), f(1))$ for the set of continuous, piecewise
differentiable families of probability mass functions $f_k$,
given end constraints
$f_k(t) |_{t=0} = f_k(0)$ and $f_k(t) |_{t=1} = f_k(1)$.
Given ${\bolds{\alpha}}(t) \in\AAA$, for
$f_k(t) \in\PP_{\Z}( f(0), f(1))$ define probability mass function
$g^{({\bolds{\alpha}})}_k(t)$, velocity field $v_{\alpha,k}(t)$
and path length $\II
(f)$ by
%
\begin{eqnarray}\qquad
g^{({\bolds{\alpha}})}_k(t) & = &\alpha_{k+1}(t)
f_{k+1}(t) + \bigl(1-\alpha_k(t)\bigr) f_k(t)\qquad
\mbox{for $k=0,1, \ldots, n-1$,} \label{eq:veldef}
\\
0 & =& \frac{ \partial f_k}{\partial t}(t) + \nabla_1 \bigl( v_{\alpha
,k}(t)
g^{({\bolds{\alpha}})}_k(t) \bigr)
\nonumber
\\[-8pt]\label{eq:veldef2}
\\[-8pt]
\eqntext{\mbox{$t$-almost
everywhere for $k= 0, 1, \ldots, n$,}}
\\
\II(f)^2 & =& \int_0^1 \Biggl(
\sum_{k=0}^{n-1} g^{({\bolds
{\alpha}})}_k(t)
v_{\alpha
,k}(t)^2 \Biggr) \,dt =: \int_0^1
\beta(t) \,dt.
\end{eqnarray}
Define $V_n$ via
%
\begin{equation}
\label{eq:vndef} V_n^2\bigl( f(0), f(1) \bigr) = \mathop{
\inf_ {f_k \in\PP_{\Z}( f(0),
f(1)),}}_{ \alpha_k(t) \in\AAA} \II(f)^2 ,
\end{equation}
and refer to any path achieving the infimum in (\ref{eq:vndef}), if it
exists, as a geodesic.
\end{definition}

\begin{proposition} \label{prop:metric}
$V_n$ is a metric on the space of probability measures on $\{0,\ldots,
n\}$, Moreover, for any geodesic
$f$ we have
%
\begin{equation}
\label{eq:lengthspace} V_n \bigl(f(s), f(t)\bigr) = |t-s| V_n
\bigl(f(0), f(1)\bigr) \qquad\mbox{for any $0 \leq s,t \leq1$.}
\end{equation}
\end{proposition}
\begin{pf}
It is clear that $V_n \geq0$ and that $V_n(f,g)=0$ implies $f=g$. To
prove $V_n$ is symmetric, we transpose the path $f(t)$ in time
$\widetilde{f}_k(t) = f_{k}(1-t)$, taking $\widetilde{\alpha}_k(t) =
\alpha_{k}(1-t)$ gives $\widetilde{g}^{(\widetilde{{\bolds
{\alpha}}})}_k(t)
= g^{({\bolds{\alpha}})}_{n-k-1}(1-t)$, and $v_{\widetilde
{\alpha},k} = -v_{\alpha,k}$
so that $V_n^2(f(0), f(1)) = V_n^2( f(1), f(0))$.

To prove the triangle inequality, we consider three mass functions
$f(0),f^*,\break   f(1)$. For any paths $(f^{(0)}(t))_{t \in[0,1]} \in\PP_{\Z
}(f(0),f^*)$ and $(f^{(1)}(t))_{t \in[0,1]} \in\PP_{\Z}(f^*,\break f(1))$ we
construct $(f(t))_{t \in[0,1]} \in\PP_{\Z}(f_0,f_1)$ such that
$ \II(f) = \II(f^{(0)}) + \II(f^{(1)})$, as follows:
\begin{itemize}
\item If $t \leq\rho$, we set $\tau_0(t) := t/\rho$ and $f_k(t) =
f^{(0)}_k(\tau_0(t))$. We then have $\alpha_k(t) = \alpha
^{(0)}_k
(\tau_0(t) )$, with $g^{({\bolds{\alpha}})}_k(t) =
g^{(0),(\alpha)}_k
(\tau
_0(t) )$ and $ v_{\alpha,k}(t) = \frac{1}{\rho} v_{\alpha
,k}^{(0)} (\tau_0(t) )$.
\item If $t > \rho$, we set $\tau_1(t) := (t-\rho)/(1-\rho)$ and
$f_k(t) = f^{(1)}_k(\tau_1(t))$. We have $\alpha_k(t) = \alpha
^{(1)}_k (\tau_1(t) )$, with $g^{({\bolds{\alpha
}})}_k(t) =
g^{(1),(\alpha
)}_k (\tau_1(t) )$ and $ v_{\alpha,k}(t) = \frac
{1}{1-\rho}\times\break 
v_{\alpha,k}^{(1)} (\tau_1(t) )$.
\end{itemize}
The change of variables formula allows us to compute
\begin{eqnarray*}
\II(f)^2 &=& \int_0^\rho\sum
_{k=0}^{n-1} g^{(0),(\alpha)}_k
\bigl(\tau _0(t)\bigr) \frac{1}{\rho^2}v^{(0)}_{\alpha,k}
\bigl(\tau_0(t)\bigr)^2 \,dt
\\
& & {}+ \int_\rho^1 \sum
_{k=0}^{n-1} g^{(1),(\alpha)}_k\bigl(
\tau_1(t)\bigr) \frac{1}{(1-\rho)^2}v^{(1)}_{\alpha,k}
\bigl(\tau_1(t)\bigr)^2 \,dt
\\
&=& \frac{1}{\rho} \II(f_0)^2+ \frac{1}{1-\rho}
\II(f_1)^2.
\end{eqnarray*}
Choosing the optimal $\rho= \rho^* := \frac{\II(f^{(0)})}{\II
(f^{(0)})+\II(f^{(1)})}$ gives the result.

We can extend the same argument to prove equation \eqref
{eq:lengthspace} above. We first prove the case $t=1$.
Consider any geodesic $f$ and $0 \leq s \leq1$. We can take $f^* =
f(s)$ in the argument above
and decompose the geodesic into a path from $f(0)$ to $f(s)$ of length
$\II(f^{(0)})$ and a path from $f(s)$ to $f(1)$ of length $\II
(f^{(1)})$. We know that the optimal $\rho^*$ is
equal to $s$ (otherwise we could reduce $V_n$ by taking the path at a
different rate, contradicting the fact that $f$ is a geodesic).
We deduce that $\II(f^{(0}) = s  ( \II(f^{(0)}) + \II(f^{(1)})
 ) = s V_n(f)$, or that the inner sum $\beta(t) = \sum_{k=0}^{n-1}
g^{({\bolds{\alpha}})}_k(t) v_{\alpha,k}(t)^2$ is
constant almost everywhere in $t$.

We can prove the more general form of equation \eqref{eq:lengthspace}
using a similar argument. We decompose the path
into three parts, $f^{(0)}\in\mathcal{P}_\mathbb{Z}(f(0),f(s))$,
$f^{(1)} \in\mathcal{P}_\mathbb{Z}(f(s),f(t))$ and $f^{(2)} \in
\mathcal{P}_\mathbb{Z}(f(t),f(1))$. Let us consider some $0 < \rho_0 <
\rho_1 < 1$, and take $\tau_0(t) := t/\rho_0$, $\tau_1(t) :=
(t-\rho
_0)/(\rho_1-\rho_0)$ and $\tau_2(t) = (t-\rho_1)/(1-\rho_1)$. A similar
argument shows that unless
$\rho_0 = s$ and
$\rho_1 = t$, the length of the path can be reduced in the same way.
\end{pf}
%
\subsection{Constant velocity paths}

\begin{lemma} \label{lem:lowerbd}
For any geodesic $f(t)$ between
$f(0)$ and $f(1)$, the $\beta(t)$ is constant in $t$. Further, if there
exists a geodesic between these, then writing
mean $\lambda(t) = \sum_k k f_k(t)$
the
\[
V_n\bigl(f(0), f(1) \bigr) \geq\bigl| \lambda(0) - \lambda(1)\bigr |,
\]
with equality if and only if $v_{\alpha,k} \equiv v$ for all $k$ and
$t$, for some $v$.
\end{lemma}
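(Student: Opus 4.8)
The plan is to reduce the whole statement to a single Cauchy--Schwarz estimate for the mean $\lambda(t)$. The first assertion --- that $\beta(t)$ is constant in $t$ along a geodesic --- has in fact already been obtained: it is exactly the conclusion of the final paragraph of the proof of Proposition~\ref{prop:metric}, coming from the length-space reparametrisation argument (if $\beta$ were not a.e.\ constant, a monotone change of time variable normalising it would produce an admissible path with strictly smaller $\II$). So I would simply invoke that.

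For the lower bound, the first step is to differentiate the mean. Put $w_k(t) := v_{\alpha,k}(t)\,\galpha_k(t)$, which is supported on $k\in\{0,\dots,n-1\}$. From~\eqref{eq:veldef2} we have $\partial_t f_k = -\nabla_1 w_k$, and summation by parts --- with no boundary terms, since $w$ is compactly supported, using that $\nabla_1^*$ applied to $k\mapsto k$ is identically $-1$ --- gives
$$\lambda'(t) \;=\; \sum_{k} k\,\partial_t f_k(t) \;=\; -\sum_k k\,\nabla_1 w_k(t) \;=\; \sum_{k=0}^{n-1} v_{\alpha,k}(t)\,\galpha_k(t).$$
The second step records the telescoping identity $\sum_{k=0}^{n-1}\galpha_k(t) = 1$, which follows from~\eqref{eq:veldef} using $\alpha_0\equiv 0$, $\alpha_n\equiv 1$ (this is also why each $\galpha(t)$ is a probability mass function). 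Cauchy--Schwarz applied to the vectors $\big(v_{\alpha,k}\sqrt{\galpha_k}\big)_k$ and $\big(\sqrt{\galpha_k}\big)_k$ then yields $\lambda'(t)^2 \le \big(\sum_k v_{\alpha,k}^2\galpha_k\big)\big(\sum_k \galpha_k\big) = \beta(t)$. Integrating and applying Cauchy--Schwarz in $t$,
$$|\lambda(1)-\lambda(0)| \;\le\; \int_0^1 |\lambda'(t)|\,dt \;\le\; \int_0^1 \sqrt{\beta(t)}\,dt \;\le\; \Big(\int_0^1 \beta(t)\,dt\Big)^{1/2} \;=\; \II(f).$$
Since this bound holds for every admissible path, taking the infimum over $\PP_{\Z}(f(0),f(1))$ and $\vd{\alpha}\in\AAA$ gives $V_n(f(0),f(1)) \ge |\lambda(0)-\lambda(1)|$ (this half in fact needs no geodesic to exist).

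For the equality characterisation, assume a geodesic $f$ exists, so that $V_n(f(0),f(1)) = \II(f)$ and $\beta$ is constant. If the velocity field is constant, $v_{\alpha,k}(t)\equiv v$, then $\lambda'(t) = v\sum_k\galpha_k = v$ and $\beta(t) = v^2\sum_k\galpha_k = v^2$, so $\II(f) = |v| = |\lambda(1)-\lambda(0)|$ and equality holds. Conversely, if $\II(f) = |\lambda(0)-\lambda(1)|$ then every inequality in the displayed chain is an equality: equality in $\big|\int\lambda'\big| = \int|\lambda'|$ forces $\lambda'$ to have constant sign, and equality in $\lambda'(t)^2 \le \beta(t)$ forces, for a.e.\ $t$, proportionality of the two vectors above, i.e.\ $v_{\alpha,k}(t) = c(t)$ for every $k$ with $\galpha_k(t)>0$ --- on the indices where $\galpha_k(t)=0$ the velocity field does not enter~\eqref{eq:veldef2} and may be taken equal to $c(t)$, changing neither $f$ nor $\beta$. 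Then $\beta(t) = c(t)^2$ and $\lambda'(t) = c(t)$; constancy of $\beta$ makes $|c(t)|$ constant and the constant sign of $\lambda'$ makes $c(t)$ itself constant, whence $v_{\alpha,k}(t)\equiv v$.

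The routine parts are the summation by parts and the identity $\sum_k\galpha_k = 1$; the one place I expect to need care is the equality analysis --- specifically, tracking the indices $k$ at which $\galpha_k(t)$ vanishes (where the velocity field is underdetermined), and checking that the regularity assumed of geodesics ($f$ continuous and piecewise differentiable, $\vd{\alpha}$ measurable) suffices to justify the "$\lambda'$ has constant sign" and "$\beta$ constant a.e." steps. I do not anticipate any serious obstacle.
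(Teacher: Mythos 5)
Your proposal is correct and follows essentially the same route as the paper: constancy of $\beta$ is quoted from the length-space argument in Proposition \ref{prop:metric}, the mean is differentiated and summed by parts using $-\nabla_1^*(k)=1$, and Cauchy--Schwarz with the fact that $\galpha(t)$ is a probability mass function gives $|\lambda'(t)|\le\sqrt{\beta(t)}$, from which the bound and the equality case follow by integration. Your version is marginally more careful in two places the paper glosses over (the extra Cauchy--Schwarz in $t$ showing the lower bound needs no geodesic, and the treatment of indices where $\galpha_k(t)=0$ in the equality analysis), but the argument is the same.
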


\begin{pf}
Proposition~\ref{prop:metric} shows that for any geodesic, we know that
$\sqrt{\beta(t)} \equiv V_n(f(0),f(1))$ for almost all $t$.
Since $\lambda(t) = \sum_{k=0}^n k f_k(t)$, differentiating and using
equation (\ref{eq:veldef2}) gives
%
\begin{eqnarray}
\label{eq:meanchange} \frac{\partial\lambda}{\partial t}(t) & = & - \sum_{k=0}^n
k \nabla_1 \bigl( v_{\alpha,k}(t) g^{({\bolds{\alpha}})}_k(t)
\bigr)
\nonumber
\\[-8pt]
\\[-8pt]
\nonumber
& = & - \sum_{k=0}^n
\nabla_1^*( k ) \bigl( v_{\alpha,k}(t) g^{({\bolds{\alpha}})}
_k(t) \bigr) = \sum_{l=0}^n
g^{({\bolds{\alpha}})}_l(t) v_{\alpha,l}(t),
\end{eqnarray}
since $-\nabla_1^*(k) = 1$.
Using equation (\ref{eq:veldef2}) and Cauchy--Schwarz, since
$g^{({\bolds{\alpha}})}
(t)$ is a probability mass
function, equation (\ref{eq:meanchange}) gives that for any $t$, since
%
\begin{equation}
\Biggl( \sum_{k=0}^n g^{({\bolds{\alpha}})}_k(t)
v_{\alpha
,k}(t)^2 \Biggr) \geq \Biggl( \sum
_{k=0}^n g^{({\bolds{\alpha}})}_k(t)
v_{\alpha,k}(t) \Biggr)^2 \label{eq:cs} = \biggl(
\frac{ \partial\lambda(t)}{\partial t} \biggr)^2,
\end{equation}
or that $ \llvert  \frac{ \partial\lambda(t)}{\partial t} \rrvert
\leq
\sqrt{ \beta(t)} = V_n(f(0),f(1))$, and the result follows by integration.

Observe that equality holds in equation (\ref{eq:cs}) if and only if
$v_{\alpha,k}(t)
= \frac{\partial\lambda}{\partial t}(t)$ for all~$k$, and equality
holds overall if and only if $\frac{\partial\lambda}{\partial t}(t) =
V_n(f(0), f(1))$ for all $t$.
\end{pf}
Lemma~\ref{lem:lowerbd} focuses attention on interpolations for which
velocity field $v_{\alpha,k}(t) \equiv v$ for all $k$ and $t$, for some
${\bolds{\alpha}(t)} \in\AAA$. Recall that Definition~\ref
{def:modtransp}
refers to such an interpolation as a ``constant velocity path,''
and we say that $f(t)$ satisfies a modified transport equation.

\begin{lemma} \label{lem:unique}
If $f_k(t)$ can be expressed as a constant velocity path for some
choice of $v$ and ${\bolds{\alpha}} \in\AAA$, then this
representation is
unique (there is no other
choice of $v$ and ${\bolds{\alpha}} \in\AAA$ for which it is a constant
velocity path).
\end{lemma}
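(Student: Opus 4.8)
\textbf{Proof proposal for Lemma \ref{lem:unique}.}

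The plan is to show that the pair $(v, \vd{\alpha})$ is forced by $f_k(t)$ through a chain of $k$-by-$k$ deductions, starting from the boundary of the support and working inwards, then fixing $v$ from the total mass constraint. First I would observe that combining \eqref{eq:modified} and \eqref{eq:mixture} gives, for every $k$ and almost every $t$,
\begin{equation*}
\frac{\partial f_k}{\partial t}(t) = - v \Bigl( \alpha_{k+1}(t) f_{k+1}(t) + (1-\alpha_k(t)) f_k(t) - \alpha_{k}(t) f_{k}(t) - (1-\alpha_{k-1}(t)) f_{k-1}(t) \Bigr),
\end{equation*}
with the conventions $\alpha_0(t) \equiv 0$, $\alpha_n(t) \equiv 1$ (and $f_{-1} = f_{n+1} = 0$). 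Setting $k = 0$ in \eqref{eq:modified} uses only $\galpha_0(t) = \alpha_1(t) f_1(t) + f_0(t)$, since $\nabla_1(\galpha_0) = \galpha_0$ (as $g^{(\vd{\alpha})}_{-1} = 0$). Hence $\frac{\partial f_0}{\partial t}(t) = -v\bigl(\alpha_1(t) f_1(t) + f_0(t)\bigr)$, which determines the product $v\,\alpha_1(t)$ in terms of $f$ alone (and $v$ itself, if $f_0(t)$ is not identically zero — but we handle $v$ separately below). More robustly: summing \eqref{eq:modified} over $k = 0, \ldots, m$ telescopes the $\nabla_1$ to give $\sum_{k=0}^m \frac{\partial f_k}{\partial t}(t) = - v\, g^{(\vd{\alpha})}_m(t) = -v\bigl(\alpha_{m+1}(t) f_{m+1}(t) + (1-\alpha_m(t)) f_m(t)\bigr)$ for each $m = 0, \ldots, n-1$.

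The key step is to extract $v$ and then induct. Taking $m = n-1$ in the telescoped identity and using $\alpha_n \equiv 1$: the right-hand side is $-v\bigl(f_n(t) + (1-\alpha_{n-1}(t)) f_{n-1}(t)\bigr)$, while the left-hand side is $\sum_{k=0}^{n-1}\frac{\partial f_k}{\partial t} = -\frac{\partial f_n}{\partial t}$. Alternatively, and cleaner: differentiate $\lambda(t) = \sum_k k f_k(t)$ as in the proof of Lemma \ref{lem:lowerbd}; for a constant speed path $v_{\alpha,k} \equiv v$, Equation \eqref{eq:meanchange} gives $\lambda'(t) = v \sum_l \galpha_l(t) = v$, since $\sum_l \galpha_l(t) = \sum_k f_k(t) = 1$ (each $f_k$ appears once, with coefficient $\alpha_k + (1-\alpha_k) = 1$, checking the boundary terms via $\alpha_0 = 0$, $\alpha_n = 1$). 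Therefore $v = \lambda'(t) = \lambda(1) - \lambda(0)$ is uniquely determined by $f$, independent of $\vd{\alpha}$. Assuming $v \neq 0$ (if $v = 0$ then $f_k$ is constant in $t$ and any $\vd{\alpha}$ works only if it makes $\galpha_k$ irrelevant — I would note that in this degenerate case uniqueness should be interpreted appropriately, or simply restrict to $v \neq 0$ as is implicit), we now know $g^{(\vd{\alpha})}_m(t) = -\frac{1}{v}\sum_{k=0}^m \frac{\partial f_k}{\partial t}(t)$ for every $m$, i.e. every $\galpha_m$ is determined by $f$. Then solve \eqref{eq:mixture} for the $\alpha_k$ recursively: $\alpha_1(t) f_1(t) = g^{(\vd{\alpha})}_0(t) - f_0(t)$ gives $\alpha_1$ wherever $f_1(t) \neq 0$; and in general $(1-\alpha_k(t)) f_k(t) = g^{(\vd{\alpha})}_k(t) - \alpha_{k+1}(t) f_{k+1}(t)$ together with $\alpha_{k+1}(t) f_{k+1}(t) = g^{(\vd{\alpha})}_{k+1}(t) - (1-\alpha_{k+1}(t))f_{k+1}(t)$ propagates the determination across $k$.

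The main obstacle is the zero set: on $\{t : f_k(t) = 0\}$ the coefficient $\alpha_k(t)$ is not pinned down by $\galpha_{k-1}, \galpha_k$ at that $t$. I would argue this is harmless for two reasons: first, for the probability mass functions of interest (e.g. Bernoulli sums with $p_i \in (0,1)$, or after noting $f_k > 0$ on $(0,1)$ under the stochastic ordering hypothesis) the $f_k(t)$ are strictly positive on the open interval, so $\vd{\alpha}$ is uniquely determined there; second, since $\vd{\alpha} \in \AAA$ consists of measurable functions and the statement concerns the representation as a constant speed path, the value of $\alpha_k$ on a set where $f_k$ vanishes does not affect \eqref{eq:modified}, so any two valid choices agree as elements defining the same path. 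I would state the lemma's conclusion with this understood — equivalently, uniqueness holds on $\{t : f_k(t) > 0\}$ for each $k$, which is all that is needed in the sequel. A secondary minor point to check is that the boundary conventions $\alpha_0 \equiv 0$, $\alpha_n \equiv 1$ are consistent with — not additional to — the deductions above, which I verified in reading off $v$ and $\galpha_0$.
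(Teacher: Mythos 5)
Your proposal is correct and follows essentially the same route as the paper: $v$ is pinned down as $\lambda(1)-\lambda(0)$ via Equation \eqref{eq:meanchange}, each $\galpha_l$ is recovered from the telescoped transport equation (the paper's Equation \eqref{eq:valdef3}), and the $\alpha_k$ are then solved for — the paper uses the closed-form tail sum \eqref{eq:alphavals} anchored at $\alpha_n\equiv 1$ where you run a forward recursion from $\alpha_0\equiv 0$, which is the same triangular system. Your extra remarks on the $v=0$ and $f_k(t)=0$ degeneracies are sensible additions that the paper's proof leaves implicit, but they do not change the argument.
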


\begin{pf}
Equation (\ref{eq:meanchange}) shows that if there exists a constant
velocity path with
velocity $v$, then $v = \lambda(1) - \lambda(0)$.
Using a similar argument, we can solve for ${\bolds{\alpha}}$.
The key is
to observe that, since $\alpha_n=1$, for any ${\bolds{\alpha}}
\in\AAA$,
equation (\ref{eq:veldef}) means that the sum
%
\begin{equation}
\label{eq:gandalpha} \sum_{l=k}^{n-1}
g^{({\bolds{\alpha}})}_l(t) = \sum_{l=k}^n
f_l(t) - \alpha_k(t) f_k(t).
\end{equation}
Using the distribution function $F_l(t) := \sum_{k=0}^l f_k(t)$ and
taking $v_{\alpha,l}(t) \equiv v$, we can sum equation (\ref
{eq:veldef2}) over $k$ to obtain
%
\begin{equation}
\label{eq:valdef3} \frac{\partial F_l}{\partial t}(t) + v g^{({\bolds{\alpha
}})}_l(t) = 0.
\end{equation}
Hence, $g^{({\bolds{\alpha}})}_l(t)$ is also fixed by the form
of the path $f_k(t)$, and
on rearranging equation (\ref{eq:gandalpha}), we express
%
\begin{equation}
\label{eq:alphavals} \alpha_k(t) = \frac{ \sum_{l=k}^n f_l(t) - \sum_{l=k}^{n-1}
g^{({\bolds{\alpha}})}
_l(t)}{f_k(t)}.
\end{equation}
\upqed\end{pf}

Equation \eqref{eq:alphavals} implies that $\alpha_k(t)$ is a smooth
function of $t$ in the case of constant velocity paths. In particular,
it is legitimate to consider the derivative $\frac{\partial}{\partial
t} \alpha_k(t)$, as is done, for instance, in the proof of Lemma~\ref
{lem:pde2}.
We now show that in certain circumstances our distance measure $V_n$
coincides with the Wasserstein distance $W_1$, a metric which is known
to have a natural
relationship to discrete interpolations as described in Section~\ref{sec:examples} below.

\begin{lemma} \label{lem:wasserstein}
If there exists a constant velocity path between $f(0)$ and $f(1)$
with velocity $v$,
then the Wasserstein distance $W_1(F(0),F(1))$ and $V_n(f(0), f(1))$ coincide
and are equal to $\lambda(1) - \lambda(0)$.
\end{lemma}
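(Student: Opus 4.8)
The plan is to show two inequalities, $V_n(f(0),f(1)) \geq W_1(F(0),F(1))$ always, and $V_n(f(0),f(1)) \leq \lambda(1)-\lambda(0)$ when a constant speed path exists, while recalling the elementary fact that for distributions on $\{0,\ldots,n\}$ one has $W_1(F(0),F(1)) = \int_0^1 |F_l(0)-F_l(1)|\, \text{(summed over }l)$, and that $\lambda(1)-\lambda(0) = \sum_{l=0}^{n-1}(F_l(0)-F_l(1))$. Since we are in the monotone/stochastically ordered regime where $p_i' \geq 0$, the distribution functions are ordered, $F_l(1) \leq F_l(0)$ for every $l$, so $W_1(F(0),F(1)) = \sum_{l=0}^{n-1}(F_l(0)-F_l(1)) = \lambda(1)-\lambda(0)$; this collapses the three quantities to one target value, and it remains only to sandwich $V_n$ between $W_1$ and $\lambda(1)-\lambda(0)$.

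First I would establish the lower bound $V_n(f(0),f(1)) \geq \lambda(1)-\lambda(0)$: this is exactly Lemma~\ref{lem:lowerbd} applied to a geodesic (noting $\lambda(1)\geq\lambda(0)$ so the absolute value is unnecessary), so nothing new is needed there. Next I would establish the upper bound by exhibiting an explicit admissible path and $\vd{\alpha}\in\AAA$ realising $\II(f)^2 = (\lambda(1)-\lambda(0))^2$. The natural candidate is the constant speed path itself: given that a constant speed path exists with speed $v$, Lemma~\ref{lem:lowerbd}'s equality case and Equation~\eqref{eq:meanchange} force $v = \lambda(1)-\lambda(0)$ and $\beta(t) = \sum_k g^{(\vd\alpha)}_k(t) v^2 = v^2$ (since $g^{(\vd\alpha)}$ is a probability mass function), hence $\II(f)^2 = \int_0^1 v^2\, dt = v^2 = (\lambda(1)-\lambda(0))^2$. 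Therefore $V_n(f(0),f(1)) \leq \lambda(1)-\lambda(0)$, and combined with the lower bound we get $V_n(f(0),f(1)) = \lambda(1)-\lambda(0) = W_1(F(0),F(1))$.

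The only genuinely delicate point is justifying the classical identity $W_1(F(0),F(1)) = \sum_{l}|F_l(0)-F_l(1)|$ for measures on the line — this is the standard formula expressing $W_1$ as the $L^1$ distance between distribution functions, which I would simply cite (e.g. via \cite{villani} or \cite{villani3}) rather than reprove — and checking that in the monotone Shepp--Olkin setting the distribution functions really are pointwise ordered, which follows since each $p_i(t)$ is nondecreasing so each Bernoulli, and hence the convolution, is stochastically increasing in $t$. I expect the main obstacle to be purely bookkeeping: making sure the endpoint value $\lambda(1)-\lambda(0)$ is genuinely nonnegative so that the absolute values in Lemma~\ref{lem:lowerbd} and in the $W_1$ formula can be dropped consistently, and confirming that the constant speed path produced in Section~\ref{sec:soint} is an admissible competitor in the infimum~\eqref{eq:vndef} (which it is, being continuous and piecewise differentiable with $\vd\alpha\in\AAA$). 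No hard analysis is involved; the proof is essentially a matching of upper and lower bounds already in hand.
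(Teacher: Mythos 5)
Your overall architecture matches the paper's: identify $v=\lambda(1)-\lambda(0)$ via Lemma \ref{lem:unique}, pin down $V_n$ by matching the lower bound of Lemma \ref{lem:lowerbd} against the constant speed path itself as a competitor in \eqref{eq:vndef}, and then evaluate $W_1$ as the $L^1$ distance between distribution functions under stochastic ordering (the paper cites Bickel--Freedman for that formula rather than reproving it, exactly as you propose). The sandwich argument for $V_n$ is correct and is only a slightly more explicit version of what the paper does.

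The one genuine gap is your justification of the stochastic ordering $F_l(1)\leq F_l(0)$. You derive it from ``$p_i'\geq 0$ in the monotone Shepp--Olkin regime,'' but Lemma \ref{lem:wasserstein} is stated for an arbitrary constant speed path in the sense of Definition \ref{def:modtransp}: no Bernoulli-sum structure is assumed, so there are no parameters $p_i$ to invoke, and as written this step does not apply in the stated generality. The correct general argument is already in your hands: summing the modified transport equation over $k\leq l$ gives Equation \eqref{eq:valdef3}, namely $\frac{\partial F_l}{\partial t}(t)=-v\,g^{(\vd{\alpha})}_l(t)$, and since $g^{(\vd{\alpha})}(t)$ is a probability mass function this derivative has the sign of $-v$. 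Reducing to $v\geq 0$ (legitimate because $V_n$ and $W_1$ are symmetric under reversing the path in time, and this also settles your worry about the sign of $\lambda(1)-\lambda(0)=v$), every $F_l(t)$ is nonincreasing in $t$, so $F(0)$ stochastically dominates $F(1)$ and the absolute values collapse as you want. With that substitution your proof closes and coincides with the paper's.
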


\begin{pf} Recall from the proof of Lemma~\ref{lem:unique} that
if there exists a constant velocity path with
velocity $v$, then $v = \lambda(1) - \lambda(0)$. Without loss of
generality we may assume that $v \geq0$.
In this case, $V_n = v = \lambda(1) - \lambda(0)$.

Using equation (\ref{eq:valdef3}), positivity of $v$
means that $F_k(t)$ is decreasing in $t$ for all $t$ and $k$. This
means that
$F(0)$ stochastically dominates $F(1)$ in the standard sense; see, for
example, \cite{shaked}. Lemma~8.2 and equation (8.1) of \cite{bickel}
together show that for any distribution functions
$F(0)$ and $F(1)$, the $W_1(F(0), F(1)) = \int| F(1)_y - F(0)_y | \,dy$,
so in this stochastically ordered case we deduce that
\begin{eqnarray*}
W_1\bigl(F(0), F(1)\bigr) &=& \int\bigl(F(0)_y -
F(1)_y\bigr) \,dy = \int y \,dF(1)_y - \int y
\,dF(0)_y\\
& =& \lambda(1) - \lambda(0),
\end{eqnarray*}
and the argument is complete.
\end{pf}

\subsection{Binomial interpolation} \label{sec:examples}

\begin{example} \label{ex:bino}
Comparing equations (\ref{eq:grad1}) and (\ref{eq:veldef2})
shows that, taking $\alpha_k(t) \equiv k/n$,
the binomial interpolation
(Example~\ref{ex:binomialint}) has constant velocity $v_{\alpha,k}(t)
\equiv n(q-p)$ and hence achieves the lower bound in Lemma~\ref
{lem:lowerbd}, with $V_n( \operatorname{Bin}(n, p),
\operatorname{Bin}(n, q)) = n(q-p)$.

Contrast this with the approach of Erbar and Maas \cite{erbar,maas}
(see also Mielke~\cite{mielke}), based on
Markov chains with a given stationary distribution ${\bolds{\pi
}}$. In the
two-point case, taking as a reference
${\bolds{\pi}} = (q/(p+q), p/(p+q))$, Maas~\cite{maas} write
$\rho^{\beta}$ for a relative density equivalent to the probability
mass function
$f^{\beta} = (f_0^{\beta},f_1^{\beta}) = ((1-\beta)/2,(1+\beta)/2)$.
Example~2.6 of \cite{maas}
implies a distance of
$ {\mathcal W}( \rho^\alpha, \rho^\beta) = \frac{1}{\sqrt{2 p}}
\int_\alpha^\beta\sqrt\frac{ \arctanh r}{r} \,dr$,
in this case, in contrast to the $|\beta-\alpha|/2$ we obtain.
\end{example}


Example~\ref{ex:bino} can be generalized considerably as follows.
Given a
probability mass function $f$, R\'{e}nyi \cite{renyi4} introduced the
thinned probability mass function $T_t f$ to be the law of the random sum
%
\begin{equation}
\label{eq:thin} \sum_{i=1}^X
B_i^{(t)},
\end{equation}
where $X \sim f$ and $B_i^{(t)}$ are Bernoulli($t$) random variables,
independent of each other and of $X$.
Thinning interpolates between the original measure $f = T_1 f$ and a~point mass at zero
$T_0 f$. This operation was studied in the context of entropy of random
variables in \cite{johnson21},
and was extended by Gozlan et al. \cite{gozlan} and by Hillion~\cite{hillion3}
for probability measures on graphs, implying the following definition
in the case of random variables
supported on $\Z$:

\begin{definition} \label{def:binomint}
A coupling $\pi$ of mass functions $f(0)$ and $f(1)$ supported on $\Z$
[i.e., a joint distribution function $\pi_{x,y}$
whose marginals satisfy $f(0)_x = \sum_y \pi_{x,y}$ and $f(1)_y =
\sum_x \pi_{x,y}$], induces a
path as follows. Section~2.2
of \cite{gozlan} defines a mass function
%
\begin{equation}
\label{eq:gozlan} f_k(t) = v^{\pi}_k(t) := \sum
_{x,y} \pi_{x,y} \operatorname
{Bin}_{k-x}\bigl(|y-x|, t\bigr),
\end{equation}
which we can understand as the law of the random sum
$ 
X + \sum_{i=1}^{Y-X} B_i^{(t)}$, 
where $(X,Y) \sim\pi$, and as before
$B_i^{(t)}$ are Bernoulli($t$) random variables, independent of each
other and of $(X,Y)$.
Here, we use the convention that for $m \geq0$, $\sum_{i=1}^{-m}
B_i^{(t)} = - \sum_{i=1}^m B_i^{(t)}$.
\end{definition}

Proposition~2.7 of \cite{gozlan} gives a partial differential equation
showing how $f_k(t)$
evolves with $t$, using a mixture of left and right gradients (as in
\cite{johnson29}).
Proposition~2.5 of \cite{gozlan} shows that if $\pi^*$ is an optimal coupling
(in Wasserstein distance $W_1$), then $v^{\pi^*}(t)$ defines a
(constant velocity)
geodesic for the $W_1$ distance. We relate this to the discrete
Benamou--Brenier framework in the stochastically ordered context; see
Lemma~\ref{lem:wasserstein}.

\begin{lemma} \label{lem:interpol} If $f(0)$ is stochastically
dominated by $f(1)$,
then the interpolation $(f_k(t))$ defined by equation (\ref{eq:gozlan})
gives a
constant velocity path.
\end{lemma}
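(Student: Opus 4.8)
The plan is to exhibit explicitly the triple $(v, \vd{\alpha}(t), \galpha_k(t))$ that realises $(f_k(t))$ as a constant speed path, and then verify that it satisfies the modified transport equation \eqref{eq:modified} together with the mixture relation \eqref{eq:mixture}, with a \emph{constant} velocity field $v_{\alpha,k}(t) \equiv v$. By Lemma \ref{lem:wasserstein} and Lemma \ref{lem:unique}, the value of $v$ is forced to be $v = \lambda(1) - \lambda(0) = \sum_{x,y} \pi_{x,y}(y-x)$, which under the stochastic domination hypothesis is nonnegative (here one should take $\pi$ to be the monotone coupling, so that $\pi_{x,y}$ is supported on pairs with $y \geq x$; equivalently one may invoke that for stochastically ordered pairs the optimal $W_1$ coupling is monotone). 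First I would differentiate \eqref{eq:gozlan} in $t$. Using the elementary identity $\frac{\partial}{\partial t}\bino{m}{t}{j} = m\big( \bino{m-1}{t}{j-1} - \bino{m-1}{t}{j} \big)$ for the binomial mass function (the same computation underlying Equation \eqref{eq:gradient} in Example \ref{ex:binomialint}), one obtains $\frac{\partial f_k}{\partial t}(t) = -\nabla_1\big( G_k(t) \big)$ where $G_k(t) = \sum_{x,y} \pi_{x,y}\, |y-x|\, \bino{|y-x|-1}{t}{k-x}$, summing only over the terms with $y > x$.

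Next I would identify $G_k(t)$ with $v\,\galpha_k(t)$ for an appropriate choice of $\vd{\alpha}(t) \in \AAA$. The guiding computation is Yu's identity \cite[Lemma 2]{yu}, already quoted in Example \ref{ex:binomialint}, which lets us rewrite $\bino{m-1}{t}{j}$ as a mixture $\frac{j+1}{m}\bino{m}{t}{j+1} + (1-\frac{j}{m})\bino{m}{t}{j}$. Applying this inside each summand of $G_k(t)$ (with $m = |y-x|$, $j = k-x$) expresses $G_k(t)$ as a linear combination $a_k(t) f_{k+1}(t) + b_k(t) f_k(t)$ of consecutive values of $f$, after reindexing $k \mapsto k+1$ in the first piece. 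One then reads off $\galpha_k(t) := G_k(t)/v$ and defines, via the unique-solvability formula \eqref{eq:alphavals} of Lemma \ref{lem:unique} (or directly by matching the mixture coefficients), the functions $\alpha_k(t)$; the boundary conditions $\alpha_0 \equiv 0$, $\alpha_n \equiv 1$ and $0 \le \alpha_k \le 1$ should fall out because each summand contributes a convex-combination weight in $[0,1]$ and the contributions of terms with $x = k$ or with $y - x$ forcing the endpoint collapse appropriately. By construction the modified transport equation \eqref{eq:modified} then holds with $v_{\alpha,k}(t) \equiv v$ constant, which is exactly the definition of a constant speed path.

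The main obstacle I anticipate is the bookkeeping in the step identifying $G_k(t)$ with $v \galpha_k(t)$: one must check that the coefficients produced by aggregating Yu's identity over all pairs $(x,y)$, weighted by $\pi_{x,y}$, genuinely collapse into the rigid two-term shape $\alpha_{k+1}(t) f_{k+1}(t) + (1-\alpha_k(t)) f_k(t)$ with a \emph{single} scalar $\alpha_k(t)$ attached to $f_k(t)$ in two consecutive mixtures, rather than into a more general linear combination. The cleanest way around this is probably to avoid matching coefficients termwise and instead argue at the level of distribution functions: sum $\frac{\partial f_k}{\partial t} = -\nabla_1(G_k)$ to get $\frac{\partial F_l}{\partial t}(t) = -\sum_{k \le l} \nabla_1(G_k(t))$, compare with \eqref{eq:valdef3}, and thereby \emph{define} $\galpha_l(t) := -\frac{1}{v}\frac{\partial F_l}{\partial t}(t)$; then one only needs to check that the $\alpha_k(t)$ recovered from \eqref{eq:alphavals} lie in $[0,1]$, which follows from the monotonicity and ordering of the partial sums $\sum_{l \ge k} \galpha_l(t)$ relative to $\sum_{l \ge k} f_l(t)$ — a consequence of $G_k(t) \ge 0$ and of the stochastic domination. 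This reduces the problem to verifying a handful of inequalities rather than an exact algebraic collapse, and it also makes transparent why the hypothesis $f(0)$ stochastically dominated by $f(1)$ is exactly what is needed.
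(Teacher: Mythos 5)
Your computational skeleton is the same as the paper's: differentiate \eqref{eq:gozlan} to get $\frac{\partial f_k}{\partial t} = -\nabla_1(v g_k)$ with $v=\sum_{x,y}\pi_{x,y}(y-x)$ and $g_k(t)=\sum_{x,y}\wt{\pi}_{x,y}\bino{y-x-1}{t}{k-x}$ for the distance-biased coupling $\wt{\pi}_{x,y}=\pi_{x,y}(y-x)/v$, then use the convolution identity $(1-t)\bino{m-1}{t}{j}+t\bino{m-1}{t}{j-1}=\bino{m}{t}{j}$ to split $f_l$ and read off $\alpha_k$ from \eqref{eq:alphavals}. Your ``cleanest way around'' --- defining $\galpha_l:=-\frac{1}{v}\frac{\partial F_l}{\partial t}$ and recovering $\alpha_k$ from \eqref{eq:alphavals} --- is precisely what the paper does, so up to that point you have reproduced the published argument.

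The gap is your final claim that the bounds $0\le\alpha_k(t)\le 1$ ``follow from'' $G_k(t)\ge 0$ together with stochastic domination. They do not, and the paper does not claim they do: its proof of this lemma ends by explicitly deferring to future work ``the question of which interpolations in the form of Equation \eqref{eq:gozlan} induce coefficients satisfying $0\le\alpha_k(t)\le 1$''. In fact the bounds can fail. Take $n=2$, $f(0)=(\tfrac12,\tfrac12,0)$ and $f(1)=(\tfrac12,0,\tfrac12)$, so $f(0)$ is stochastically dominated by $f(1)$, and the unique coupling supported on $\{y\ge x\}$ (which is also the monotone $W_1$-optimal one) is $\pi_{0,0}=\pi_{1,2}=\tfrac12$. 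Then \eqref{eq:gozlan} gives $f(t)=(\tfrac12,\tfrac{1-t}{2},\tfrac{t}{2})$, so $F_0(t)\equiv\tfrac12$ and \eqref{eq:valdef3} forces $\galpha_0(t)=0$; but \eqref{eq:mixture} with $\alpha_0\equiv 0$ reads $\galpha_0=\alpha_1\tfrac{1-t}{2}+\tfrac12$, forcing $\alpha_1(t)=-1/(1-t)<0$. By the uniqueness argument of Lemma \ref{lem:unique} there is no other admissible choice of $v$, $\galpha$ or $\vd{\alpha}$, so no $\vd{\alpha}\in\AAA$ exists for this path. Positivity of $G_k$ only tells you that $\galpha$ is a probability mass function; the constraints $0\le\alpha_k\le 1$ encode a much finer pointwise compatibility between $g$ and $f$ (each $g_k$ must sit between $f_k$ and $f_k+f_{k+1}$ in a coupled way across $k$), which stochastic domination alone does not deliver. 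Any complete proof needs an additional hypothesis or a case-specific verification, as the paper carries out for the translation case (Example \ref{ex:translation}) and for the Shepp--Olkin interpolation (Proposition \ref{prop:alphaformula}).
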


\begin{pf}
In this case, $x \leq y$ for all $(x,y)$ in the support of $\pi$.
Define $v = \sum_{x,y} \pi_{x,y}(y-x)$ and $\wt{\pi}_{x,y} = \pi
_{x,y}(y-x)/v$ for another
``distance-biased'' joint distribution function. Direct
differentiation of equation (\ref{eq:gozlan}) gives
that
\begin{eqnarray*}
\frac{\partial f_k}{\partial t}(t)& = &\sum_{x,y}
\pi_{x,y}(y-x) \bigl( \operatorname{Bin}_{k-x-1}(y-x-1, t) -
\operatorname {Bin}_{k-x}(y-x-1, t) \bigr)\\
&=& - \nabla_1\bigl(
v g_k(t)\bigr),
\end{eqnarray*}
where $g_k(t) = \sum_{x,y} \wt{\pi}_{x,y} \operatorname
{Bin}_{k-x}(y-x-1, t)$.
Since for any $x$, the convolution
$(1-t) \operatorname{Bin}_{x}(m-1, t) + t \operatorname
{Bin}_{x-1}(m-1, t) = \operatorname{Bin}_{x}(m, t)$,
in equation (\ref{eq:gozlan}) we can express
%
\begin{equation}
\label{eq:split} \quad f_l(t) = \sum_{x,y}
\pi_{x,y} \bigl( (1-t) \operatorname {Bin}_{l-x}(y-x-1, t) + t
\operatorname{Bin}_{l-x-1}(y-x-1, t) \bigr),
\end{equation}
and substituting in equation (\ref{eq:alphavals}), we obtain
%
\begin{eqnarray}
\label{eq:alphaval} \alpha_k(t) & = &  \sum_{x,y} \pi_{x,y}  \Biggl[ t
\operatorname{Bin}_{k-x-1}(y-x, t)\nonumber
\nonumber
\\[-8pt]
\\[-8pt]
\nonumber
&&{}+ \sum_{l=k}^{n-1} \operatorname{Bin}_{l-x}(y-x-1, t)  \bigl( 1-
{(y-x)}/{v} \bigr)
 \Biggr]\bigg/{f_k(t)}.\nonumber
\end{eqnarray}
\upqed\end{pf}
In future work, we hope to consider
the question of which interpolations in the form of equation (\ref
{eq:gozlan}) induce coefficients satisfying
$0 \leq\alpha_k(t) \leq1$.

\begin{example}[(Translation case)] \label{ex:translation}
Hillion considered the translation case, where $f(1)_{k+m} = f(0)_k =
f_k$ for some $m$.
Theorem~1.1 of \cite{hillion2} proved that if $f$ is
log-concave (i.e., $f_k^2 \geq f_{k-1} f_{k+1}$ for all $k$), the
entropy is concave in $t$.
This paper generalizes Hillion's
result: the conditions of Theorem~\ref{th:ConvexityEntropy} can be verified, and the concavity of entropy
is reproved.

In particular
we interpolate by $\pi_{x,y}$ supported only on $\{ (x,y)\dvtx y-x = m \}$,
so that $\wt{\pi} = \pi$, and
clearly $v = m$. Then equation (\ref{eq:split}) simplifies to give
$f_k(t) = (1-t) g_{k}(t) + t g_{k-1}(t)$,
and equation (\ref{eq:alphaval}) becomes
%
\begin{equation}
\alpha_k(t)  = \frac{t g_{k-1}(t)}{ f_k(t)} = 1 -
\frac{(1-t)
g_k(t)}{f_k(t)}, \label{eq:transalphachoice}
\end{equation}
so that clearly $\alpha_k(t)$ lies between $0$ and $1$ for all $k$ and $t$.\vadjust{\goodbreak}

Equation (\ref{eq:transalphachoice}) shows that GLC, Condition
\ref{cond:glc}, holds with equality in this case. Further
$k$-monotonicity, Condition \ref{cond:kmon}, holds as a consequence of
log-concavity
of $g_k(t)$ (which follows from log-concavity of $f$). The
$t$-monotonicity, Condition \ref{cond:tmon}, is
less straightforward, but can be verified using direct calculation,
using the log-concavity of $h$.
\end{example}

\subsection{Generalized log-concavity} \label{sec:glc}
Recall from Section~\ref{sec:benbren} that probabilistic inequalities
can be proved for densities $f$ such that $f/\phi_{1/c}$ is log-concave.
For integer-valued random variables the corresponding property of ultra
log-concavity was introduced by Pemantle \cite{pemantle} and promoted by
Liggett \cite{liggett}:

\begin{definition}[(\cite{pemantle,liggett})] \label{def:ulc}
For any $n$, a probability mass function supported on $\{ 0, 1, \ldots,
n \}$ is ultra log-concave of
order $n$, denoted by ${\operatorname{\mathbf{ULC}}}(n)$, if the ratio
$ f_k/\operatorname{Bin}_{k}(n, t)$ is a log-concave function.
Equivalently we
require that
%
\begin{eqnarray}
\label{eq:ulcn} \frac{k+1}{n} \biggl( 1- \frac{k+1}{n} \biggr)
f_{k+1}^2 - \frac{k+2}{n} \biggl( 1- \frac{k}{n}
\biggr) f_{k} f_{k+2} \geq0\qquad
\mbox {for $k=0, \ldots, n-2$}.\nonumber\\
\end{eqnarray}
We include the possibility that (formally speaking) $n=\infty$, in
which case we require that the ratio of
$f_k$ and a Poisson mass function be log-concave, and write ${\operatorname{\mathbf
{ULC}}}(\infty)$.
\end{definition}

This condition was first used to control entropy by
Johnson \cite{johnson21}, who showed that, fixing the mean, the Poisson
is maximum entropy in the class ${\operatorname{\mathbf{ULC}}}(\infty
)$. A~corresponding result
was proved by Yu \cite{yu2}, who showed that, fixing the mean, the
binomial is maximum
entropy in the class ${\operatorname{\mathbf{ULC}}}(n)$. This generalizes
the result (see~\cite{mateev} and~\cite{shepp}) that the entropy of
Bernoulli sums with a given mean
is maximized by the binomial, since Newton's inequalities (see, e.g.,
\cite{niculescu}) show that Bernoulli sums are ${\operatorname{\mathbf
{ULC}}}(n)$.

Our generalized log-concavity, Condition \ref{cond:glc}, generalizes
Definition~\ref{def:ulc}, with ${\operatorname{\mathbf{ULC}}}(n)$
corresponding to ${\mathbf
{GLC}}({\bolds{\alpha}})$
for $\alpha_k = k/n$, as in Example~\ref{ex:binomialint}.
Note that GLC, Condition \ref{cond:glc}, and $k$-monotonicity,
Condition \ref{cond:kmon},
together imply that $f$ is log-concave.

\section{Framework for concavity of discrete entropy} \label{sec:entcon}

In this section,
we prove
Theorem~\ref{th:ConvexityEntropy}, which shows that entropy is concave
if Conditions \ref{cond:kmon}, \ref{cond:tmon} and \ref{cond:glc} are
satisfied.
In fact, since $t$-monotonicity (Condition \ref{cond:tmon}) is too
restrictive for our purposes, we
prove a more general result, Theorem~\ref{thm:cond1-4}, which gives
concavity of entropy despite replacing Condition \ref{cond:tmon} by the
weaker Condition \ref{cond:delta}.
Lemma~\ref{lem:fghIneq1} shows that this condition is indeed weaker,
and hence together with Theorem~\ref{thm:cond1-4} proves
Theorem~\ref{th:ConvexityEntropy}.

\begin{condition} \label{cond:delta}
Consider a constant velocity path, satisfying a modified transport equation
$\frac{\partial f_k}{\partial t}(t) = - v \nabla_1  (
g^{({\bolds{\alpha}})}_k(t)
 )$
with some $h$ satisfying $\frac{\partial^2 f_k}{\partial t^2}(t) = v^2
\nabla_2  ( h_k  )$.
If we define
%
\begin{equation}
\label{eq:htdef} \wt{h}_k := \frac{ 2 g^{({\bolds{\alpha}})}_k g^{({\bolds
{\alpha}})}_{k+1} f_{k+1} -  (
g^{({\bolds{\alpha}})}_k
 )^2 f_{k+2} -  ( g^{({\bolds{\alpha}})}_{k+1}
)^2 f_{k}}{
f_{k+1}^2 -
f_k f_{k+2}},
\end{equation}
then we require that
%
\begin{equation}
\label{eq:nec} h_k \leq\wt{h}_{k}\qquad \mbox{for $k = 0, 1,
\ldots, n-2$.}
\end{equation}
\end{condition}

We first observe that
the same coefficients $(\alpha_k)_{k =0, \ldots, n}$ introduced in
equation~(\ref{eq:mixture})
can be used to state a second-order modified transport equation:

\begin{lemma} \label{lem:pde2}
If there exist coefficients ${\bolds{\alpha}}$ giving rise to an
interpolation with constant velocity $v$, then
%
\begin{equation}
\frac{\partial^2 f_k}{\partial t^2} = v^2 \nabla_2 ( h_k ),
\end{equation}
where [in a result paralleling (\ref{eq:hexpress}) in the continuous
case above], for $k=0, \ldots, n-2$,
%
\begin{eqnarray}\label{eq:ModifiedTransport2}
h_k  &=&  (1-\alpha_k) (1-\alpha_{k+1})
f_k + 2\alpha_{k+1}(1-\alpha _{k+1})
f_{k+1} + \alpha_{k+1} \alpha_{k+2} f_{k+2}
\nonumber
\\[-8pt]
\\[-8pt]
\nonumber
&&{}-
f_{k+1}\frac{1}{v} \frac{\partial\alpha_{k+1}}{\partial t}.
\end{eqnarray}
\end{lemma}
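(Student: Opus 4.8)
\textbf{Proof plan for Lemma~\ref{lem:pde2}.}

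The plan is to obtain the second-order modified transport equation by differentiating the first-order equation \eqref{eq:modified} in time, exactly mirroring the continuous computation that produced \eqref{eq:hexpress} in the proof of Theorem~\ref{th:PerturbedTranslation}. First I would start from $\frac{\partial f_k}{\partial t} = -v \nabla_1(\galpha_k)$ and take another $t$-derivative, so that $\frac{\partial^2 f_k}{\partial t^2} = -v \nabla_1\left(\frac{\partial}{\partial t}\galpha_k\right)$. Using the mixture formula \eqref{eq:mixture}, namely $\galpha_k = \alpha_{k+1} f_{k+1} + (1-\alpha_k) f_k$, the product rule gives
\begin{equation*}
\frac{\partial}{\partial t}\galpha_k = \alpha_{k+1}\frac{\partial f_{k+1}}{\partial t} + (1-\alpha_k)\frac{\partial f_k}{\partial t} + \frac{\partial \alpha_{k+1}}{\partial t} f_{k+1} - \frac{\partial \alpha_k}{\partial t} f_k.
\end{equation*}
Into the first two terms I substitute $\frac{\partial f_j}{\partial t} = -v\nabla_1(\galpha_j) = -v(\galpha_j - \galpha_{j-1})$ (with the convention that $\galpha_{-1}$ and $\galpha_n$ vanish, consistent with $\alpha_0 \equiv 0$, $\alpha_n \equiv 1$, and the support constraints). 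The terms involving $\frac{\partial \alpha_k}{\partial t}$ are legitimate to write down because of the remark following Lemma~\ref{lem:unique}, which establishes that $\alpha_k(t)$ is smooth in $t$ for constant speed paths.

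The bulk of the work is then purely algebraic: after substituting, one has $\frac{\partial^2 f_k}{\partial t^2} = v \nabla_1$ of an expression that is a combination of $\galpha$-terms and $\frac{\partial \alpha}{\partial t}$-terms. The goal is to massage this into the form $v^2 \nabla_2(h_k) = v^2 \nabla_1(\nabla_1 h_k) = v^2 \nabla_1(h_k - h_{k-1})$ and thereby read off $h_k$. Concretely, one needs to recognise the bracketed quantity as $v(\nabla_1 \text{ of something}) $ up to the derivative-of-$\alpha$ correction, and match it against \eqref{eq:ModifiedTransport2}. I would verify the claimed formula by expanding $\nabla_2(h_k)$ with $h_k$ as given in \eqref{eq:ModifiedTransport2} --- that is, compute $h_k - 2h_{k-1} + h_{k-2}$ directly --- and check it equals $\frac{1}{v^2}\frac{\partial^2 f_k}{\partial t^2}$ as derived above; this is the more mechanical but less error-prone direction. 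One should also do a sanity check at the boundary indices $k \in \{0,1,n-1,n\}$ to confirm that the formula \eqref{eq:ModifiedTransport2}, nominally stated for $k \le n-2$, extends consistently (with $f_{n+1} = 0$ etc.) so that the telescoping in $\nabla_2$ works at the edges.

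The main obstacle is bookkeeping rather than conceptual: keeping the index shifts in $\galpha_k = \alpha_{k+1}f_{k+1} + (1-\alpha_k)f_k$ straight through two applications of $\nabla_1$, and correctly tracking how the $\frac{\partial \alpha_{k+1}}{\partial t} f_{k+1}$ term survives (it should produce exactly the $-f_{k+1}\frac{1}{v}\frac{\partial \alpha_{k+1}}{\partial t}$ summand in $h_k$, with the $\nabla_1$ acting to turn the two $\frac{\partial \alpha}{\partial t}$-terms in $\frac{\partial}{\partial t}\galpha_k$ into a single clean term after combining with $\frac{\partial}{\partial t}\galpha_{k-1}$). The analogy with the continuous identity \eqref{eq:hexpress} --- where $h_t = v^2 f_t + 2v\alpha f_t' + \alpha^2 f_t'' - \alpha' f_t$ --- is the guiding template: the three $f$-terms in \eqref{eq:ModifiedTransport2} are the discrete analogues of $v^2 f$, $2v\alpha f'$, $\alpha^2 f''$ (expressed in the mixture coordinates), and the last term is the discrete $-\alpha' f$.
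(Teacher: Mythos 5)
Your plan is correct and is essentially the paper's own proof: the paper likewise differentiates $\galpha_k = \alpha_{k+1}f_{k+1}+(1-\alpha_k)f_k$ in $t$, substitutes $\frac{\partial f_j}{\partial t}=-v\nabla_1(\galpha_j)$ into the resulting expression, and recognises the outcome as $\nabla_1$ of $\mp v h_k$ with $h_k = \alpha_{k+1}\galpha_{k+1}+(1-\alpha_{k+1})\galpha_k - f_{k+1}\frac{1}{v}\frac{\partial\alpha_{k+1}}{\partial t}$, which expands to \eqref{eq:ModifiedTransport2}. The only bookkeeping to watch is the overall sign when composing the two applications of $\nabla_1$, which your sketch already flags as the main hazard.
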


\begin{pf} Recall that we write
%
\begin{equation}
g^{({\bolds{\alpha}})}_k = \alpha_{k+1}(t) f_{k+1}(t) +
\bigl(1-\alpha_k(t)\bigr) f_k(t) \qquad\mbox{for $k=0,1,
\ldots, n-1$}. \label{eq:newveldef}
\end{equation}
Differentiating equation \eqref{eq:newveldef} we have
\begin{eqnarray*}
\frac{\partial g^{({\bolds{\alpha}})}_k}{\partial t}
&=&f_{k+1}\frac{\partial\alpha_{k+1}}{\partial t}+ \alpha _{k+1}\frac
{\partial f_{k+1}}{\partial t}
+ (1-\alpha_k)\frac{\partial
f_k}{\partial t}-f_k\frac{\partial\alpha_k}{\partial t}
\\
&=& \biggl[f_{k+1}\frac{\partial\alpha_{k+1}}{\partial t} -v (1-\alpha _{k+1})
g^{({\bolds{\alpha}})}_k - v\alpha_{k+1} g^{({\bolds{\alpha}})}_{k+1}
\biggr] \\
&&{}- \biggl[f_k\frac
{\partial\alpha_k}{\partial t}-v(1-\alpha_k)
g^{({\bolds
{\alpha}})}_{k-1}- v\alpha _{k} g^{({\bolds{\alpha}})}_k
\biggr]
\\
&=& \nabla_1 [v h_k ],
\end{eqnarray*}
using the expression from (\ref{eq:ModifiedTransport2}),
and the proposition follows easily.\vadjust{\goodbreak}
\end{pf}

\begin{lemma} \label{lem:fghIneq1}
If Conditions \ref{cond:kmon}, \ref{cond:tmon} and
\ref{cond:glc} hold, then Condition \ref{cond:delta} holds.
\end{lemma}

\begin{pf}
Using
$\wt{h}_k$ and $h_k$ defined in equations (\ref{eq:htdef}) and
(\ref{eq:ModifiedTransport2}), we need to prove that $h_k \leq\wt
{h}_k$ for all $k$.
For simplicity, we write
%
\begin{eqnarray}
D_k &:=& f_{k}^2 - f_{k-1}
f_{k+1}  \geq 0,
\\
A_k& :=& \bigl(f_{k+1} g^{({\bolds{\alpha}})}_k -
f_k g^{({\bolds
{\alpha}})}_{k+1}\bigr)  \geq 0, \label
{eq:akdef}
\\
B_k &:=& \bigl(f_{k+1} g^{({\bolds{\alpha}})}_{k+1} -
f_{k+2} g^{({\bolds{\alpha}})}_k\bigr)  \geq 0. \label
{eq:bkdef}
\end{eqnarray}
The positivity of $A_k$ and $B_k$ follows from GLC and $k$-monotonicity since
we can write
\begin{eqnarray*}
(1- \alpha_{k+1}) A_k & = & \operatorname{GLC}({\bolds{\alpha
}})_k + (\alpha _{k+1} - \alpha_k)
f_k g^{({\bolds{\alpha}})}_{k+1},
\\
\alpha_{k+1} B_k & = & \operatorname{GLC}({\bolds{\alpha}})_k
+ (\alpha_{k+2} - \alpha_{k+1}) f_{k+2}
g^{({\bolds{\alpha}})}_{k}.
\end{eqnarray*}
The key is to observe that in this notation, by Lemma~\ref{lem:pde2},
%
\begin{eqnarray}
\wt{h}_k & = & \frac{g^{({\bolds{\alpha}})}_{k+1} A_k +
g^{({\bolds{\alpha}})}_k B_k}{D_{k+1}}, \label
{eq:htildenew}
\\
h_k & = & \alpha_{k+1} g^{({\bolds{\alpha}})}_{k+1} +
(1-\alpha _{k+1}) g^{({\bolds{\alpha}})}_{k} - f_{k+1}
\frac{1}{v} \frac{\partial\alpha_{k+1}}{\partial t}. \label
{eq:hknew}
\end{eqnarray}
Direct calculation gives
%
\begin{eqnarray}
g^{({\bolds{\alpha}})}_k D_{k+1} & = & f_{k+1}
A_k + f_k B_k, \label{eq:dexpr1}
\\
g^{({\bolds{\alpha}})}_{k+1} D_{k+1} & = & f_{k+2}
A_k + f_{k+1} B_k. \label{eq:dexpr2}
\end{eqnarray}
Considering the coefficients of $A_k$ and $B_k$,
we can substitute (\ref{eq:dexpr1}) and (\ref{eq:dexpr2}) in equation
(\ref{eq:hknew}) to obtain
%
\begin{eqnarray}
\label{eq:ktmon} \wt{h}_k - h_k = \frac{ f_{k+2} ( \alpha_{k+2} - \alpha_{k+1}) A_k + f_{k} (\alpha
_{k+1}-\alpha_{k}) B_k}{D_{k+1}} +
f_{k+1}\frac{1}{v} \frac{\partial\alpha_{k+1}}{\partial t}
\geq 0,\nonumber\\
\end{eqnarray}
where the
positivity of $\frac{\partial\alpha_{k+1}}{\partial t} $ is assumed in
Condition \ref{cond:tmon}.
\end{pf}

We need one further result, which can be directly verified by differentiation:

\begin{lemma} \label{lem:logs}
Writing $\theta(v) =1/(2 v) - v/2$, we have
\[
0 \leq-\log v \leq\theta(v)\qquad \mbox{for $v \leq1$.}
\]
\end{lemma}


\begin{theorem} \label{thm:cond1-4}
If Conditions \ref{cond:kmon}, \ref{cond:glc} and \ref{cond:delta}
hold, then
the entropy of $H(f)$ is concave in $t$.
\end{theorem}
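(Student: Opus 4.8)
The plan is to obtain a discrete counterpart of formula~\eqref{eq:2derent} for $H''(t)$ along a constant speed path, and then to bound its terms using Conditions~\ref{cond:kmon}, \ref{cond:glc} and~\ref{cond:delta} together with Lemma~\ref{lem:logs}.

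First I would differentiate $H(t)=-\sum_k f_k\log f_k$ twice. Since $\sum_k f_k(t)\equiv 1$, the first- and second-order $t$-derivatives of the total mass vanish, so
\[
H''(t)=-\sum_{k=0}^{n}\frac{\partial^2 f_k}{\partial t^2}\,\log f_k-\sum_{k=0}^{n}\frac{1}{f_k}\left(\frac{\partial f_k}{\partial t}\right)^2 .
\]
Substituting the modified transport equations $\frac{\partial f_k}{\partial t}=-v\nabla_1(\galpha_k)$ and $\frac{\partial^2 f_k}{\partial t^2}=v^2\nabla_2(h_k)$, and summing by parts twice in the first term (boundary terms vanish because $h_k$ is supported on $\{0,\dots,n-2\}$ and $f_k>0$ on $\{0,\dots,n\}$, so all reindexing is finite), this should become the discrete analogue of~\eqref{eq:2derent},
\begin{equation}\label{eq:sketchHpp}
H''(t)=v^2\sum_{k=0}^{n-2}h_k\log\frac{f_{k+1}^2}{f_kf_{k+2}}-v^2\sum_{k=0}^{n}\frac{(\nabla_1\galpha_k)^2}{f_k},
\end{equation}
with the conventions $\galpha_{-1}=\galpha_n=0$ (obtained from $\alpha_0=0$, $\alpha_n=1$). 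This has the same shape as~\eqref{eq:2derent}: a ``curvature'' term against the log-concavity defect $\log\frac{f_{k+1}^2}{f_kf_{k+2}}$, plus a manifestly nonpositive Fisher-type term, and the estimation of the first term is where $\wt{h}_k$ enters.

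Next I would control the first sum in~\eqref{eq:sketchHpp}. By the remark after Definition~\ref{def:ulc}, Conditions~\ref{cond:kmon} and~\ref{cond:glc} force $f$ to be log-concave, so $D_{k+1}:=f_{k+1}^2-f_kf_{k+2}\ge 0$ and $\log\frac{f_{k+1}^2}{f_kf_{k+2}}\ge 0$; moreover, writing $A_k:=f_{k+1}\galpha_k-f_k\galpha_{k+1}\ge 0$ and $B_k:=f_{k+1}\galpha_{k+1}-f_{k+2}\galpha_k\ge 0$ as in the proof of Proposition~\ref{prop:fghIneq1}, identity~\eqref{eq:htildenew} gives $\wt{h}_kD_{k+1}=\galpha_{k+1}A_k+\galpha_kB_k\ge 0$, hence $\wt{h}_k\ge 0$. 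Combining this with Condition~\ref{cond:delta} ($h_k\le\wt{h}_k$) yields, termwise, $h_k\log\frac{f_{k+1}^2}{f_kf_{k+2}}\le\wt{h}_k\log\frac{f_{k+1}^2}{f_kf_{k+2}}$ — immediately when $h_k\le 0$, and because $0\le h_k\le\wt{h}_k$ and the logarithm is nonnegative otherwise. Then Lemma~\ref{lem:logs}, applied with argument $f_kf_{k+2}/f_{k+1}^2\in(0,1]$, gives $\log\frac{f_{k+1}^2}{f_kf_{k+2}}\le\theta\!\left(f_kf_{k+2}/f_{k+1}^2\right)=D_{k+1}(f_{k+1}^2+f_kf_{k+2})/(2f_kf_{k+1}^2f_{k+2})$, and multiplying by $\wt{h}_k$ cancels the factor $D_{k+1}$. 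Plugging these two estimates into~\eqref{eq:sketchHpp}, the proof of $H''(t)\le 0$ reduces to the purely algebraic inequality
\begin{equation}\label{eq:sketchAlg}
\sum_{k=0}^{n}\frac{(\nabla_1\galpha_k)^2}{f_k}\ \ge\ \sum_{k=0}^{n-2}\frac{(\galpha_{k+1}A_k+\galpha_kB_k)(f_{k+1}^2+f_kf_{k+2})}{2\,f_kf_{k+1}^2f_{k+2}} .
\end{equation}

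To prove~\eqref{eq:sketchAlg} I would expand $\nabla_1\galpha_k=\galpha_k-\galpha_{k-1}$ and eliminate $\galpha_{k-1}$ using $f_k\galpha_{k-1}=A_{k-1}+f_{k-1}\galpha_k$ (equivalently $f_{k+1}\galpha_{k-1}=f_k\galpha_k-B_{k-1}$), together with the identities~\eqref{eq:dexpr1}--\eqref{eq:dexpr2} relating $\galpha_kD_{k+1}$ to $A_k$ and $B_k$; after putting everything over a common denominator and regrouping, the difference of the two sides of~\eqref{eq:sketchAlg} should collapse — just as the ``perfect derivative, integrate by parts'' step does in the proof of Theorem~\ref{th:EntroContinuous} — into a sum of manifestly nonnegative terms, namely weighted squares of the discrete gradient of $\galpha_k/f_k$ plus nonnegative multiples of $A_k,B_k\ge 0$. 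This final identity is the main obstacle: the bookkeeping needed to display the difference as a sum of squares is delicate, and it is precisely here that the positivity of $A_k$ and $B_k$ (hence Conditions~\ref{cond:kmon} and~\ref{cond:glc}) is used a second time. The indices with $D_{k+1}=0$ — where $A_k=B_k=0$ and the corresponding logarithm vanishes, so the relevant term of~\eqref{eq:sketchHpp} is already $0$ — are harmless and can be treated by a limiting argument or excluded by a preliminary perturbation.
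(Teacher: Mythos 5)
Your architecture coincides with the paper's up to the last step: the two-term formula for $H''(t)$, substitution of the first- and second-order modified transport equations, summation by parts against $\log f_k$, the positivity of $A_k=f_{k+1}\galpha_k-f_k\galpha_{k+1}$ and $B_k=f_{k+1}\galpha_{k+1}-f_{k+2}\galpha_k$ via Conditions \ref{cond:kmon} and \ref{cond:glc}, the identity $\wt{h}_kD_{k+1}=\galpha_{k+1}A_k+\galpha_kB_k\ge0$, and the use of Condition \ref{cond:delta} to replace $h_k$ by $\wt{h}_k$ all appear in the paper's proof. The genuine gap is the final algebraic inequality, which you explicitly leave unproved, and your choice of where to apply Lemma \ref{lem:logs} makes it unprovable in the termwise form you predict. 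Writing $g_k$ for $\galpha_k$, the paper does \emph{not} apply $\theta$ to the single ratio $f_kf_{k+2}/f_{k+1}^2$; it first factors $-\log(f_kf_{k+2}/f_{k+1}^2)=-\log u_k-\log v_k$ with $u_k=f_kg_{k+1}/(f_{k+1}g_k)\le 1$ and $v_k=f_{k+2}g_k/(f_{k+1}g_{k+1})\le 1$ (Equation \eqref{eq:score2} --- this is where $k$-MON and GLC are really consumed) and applies $\theta$ to each factor separately, yielding \eqref{eq:replace}. Since
\[
\theta(uv)-\theta(u)-\theta(v)=\frac{(1-u)(1-v)(1-uv)}{2uv}\ \ge\ 0 \qquad \text{for } u,v\in(0,1],
\]
your bound $\theta(u_kv_k)$ is strictly weaker whenever $A_k>0$ and $B_k>0$; after multiplying by $\wt{h}_k$ the excess over the paper's bound is
\[
\frac{\bigl(g_{k+1}A_k+g_kB_k\bigr)A_kB_k}{2f_kf_{k+1}^2f_{k+2}\,g_kg_{k+1}}\ >\ 0 .
\]

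This excess is fatal to the termwise ``collapse into a sum of manifestly nonnegative terms'' that you propose. The paper's $k$-th bracket equals exactly the negative perfect square \eqref{eq:remainderterm}, which vanishes identically in symmetric situations: for $\binomi{2}{t}$ at $t=1/2$ one has $g_0=g_1=1/2$, $f=(1/4,1/2,1/4)$, the $k=0$ summand on the right of your final inequality equals $5/4$, while the corresponding combination $g_0^2/f_0-2g_0g_1/f_1+g_1^2/f_2$ equals $1$; your global inequality survives there only because the full sum $\sum_k(\nabla_1 g_k)^2/f_k=2$ carries the extra boundary mass $g_0^2/f_1+g_{n-1}^2/f_{n-1}=1$ that the telescoped per-$k$ form discards. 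So no per-$k$ sum-of-squares identity exists, and whether the boundary slack always dominates the accumulated excess is exactly what you have not shown; nothing in Conditions \ref{cond:kmon}, \ref{cond:glc}, \ref{cond:delta} suggests it does (consider $f$ concentrated away from $0$ and $n$, where $g_0$ and $g_{n-1}$ are negligible). The repair is to use the paper's two-factor bound \eqref{eq:replace}: the $k$-th bracket then becomes $-\bigl(A_kg_{k+1}-B_kg_k\bigr)\bigl(f_{k+2}g_k^2-f_kg_{k+1}^2\bigr)\big/\bigl(2f_kf_{k+1}f_{k+2}g_kg_{k+1}\bigr)$, and the one-line identity $A_kg_{k+1}-B_kg_k=f_{k+2}g_k^2-f_kg_{k+1}^2$, immediate from \eqref{eq:akdef} and \eqref{eq:bkdef}, exhibits this as the manifestly nonpositive \eqref{eq:remainderterm} with no global bookkeeping required.
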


\begin{pf} For simplicity, we write $g_k$ for $g_k^{(\alpha)}$.
First note that Conditions \ref{cond:kmon} and \ref{cond:glc}
together imply
that $f$ is log-concave. In fact, they imply two stronger results, that
%
\begin{equation}
\frac{ f_k g_{k+1}}{f_{k+1} g_k} \leq1 \quad\mbox{and}\quad \frac{ f_{k+2} g_{k}}{f_{k+1} g_{k+1}} \leq1. \label{eq:score2}
\end{equation}
This means that, using Lemma~\ref{lem:logs}, writing $\theta(v) =1/(2
v) - v/2$, we can write
%
\begin{eqnarray}
\label{eq:replace} 0 \leq- \log \biggl( \frac{f_k(t) f_{k+2}(t)}{f_{k+1}(t)^2} \biggr) & = & - \log
\biggl( \frac{ f_k g_{k+1}}{f_{k+1} g_k} \biggr) - \log \biggl( \frac{ f_{k+2} g_{k}}{f_{k+1} g_{k+1}} \biggr)
\nonumber
\\
& \leq& \theta \biggl( \frac{ f_k g_{k+1}}{f_{k+1} g_k} \biggr) + \theta \biggl(
\frac{ f_{k+2} g_{k}}{f_{k+1} g_{k+1}} \biggr)
\\
& = & \frac{D_{k+1}}{2 f_{k+1} g_k g_{k+1}} \biggl( \frac{g_k^2}{f_k} + \frac{g_{k+1}^2}{f_{k+2}}
\biggr),\nonumber
\end{eqnarray}
where the last identity follows by grouping together multiples of
$g_{k+1}/g_k$ and $g_k/g_{k+1}$ and factorizing.
In a standard fashion, we can write the second derivative of entropy as
%
\begin{eqnarray}
H''(t) & = & -\sum_{k =0}^n
\frac{\partial^2 f_k}{\partial t^2}(t) \log \bigl(f_k(t)\bigr) - \sum
_{k =0}^n \frac{1}{f_k} \biggl(
\frac{\partial f_k}{\partial t}(t) \biggr)^2
\nonumber
\\
& = & - \sum_{k =0}^n v^2
\nabla_2 (h_k ) \log\bigl(f_k(t)\bigr) - \sum
_{k =0}^n \frac{ ( \nabla_1( v g_k)  )^2
}{f_k} \label{eq:tocontrol}
\\
& = & v^2 \sum_{k =0}^n
h_k \biggl( - \log \biggl( \frac{f_k(t) f_{k+2}(t)}{f_{k+1}(t)^2} \biggr) \biggr) - \sum
_{k =0}^n \frac{ ( \nabla_1( v g_k)  )^2 }{f_k} \label{eq:tocontrol2}
\\
& \leq& v^2 \sum_{k =0}^n
\biggl[ \wt{h}_k \biggl( \frac{D_{k+1}}{2 f_{k+1} g_k g_{k+1}} \biggl( \frac
{g_k^2}{f_k}
+ \frac{g_{k+1}^2}{f_{k+2}} \biggr) \biggr)- \biggl( \frac{ g_k^2}{f_k} - 2
\frac{g_k g_{k+1}}{f_{k+1}} + \frac
{g_{k+1}^2}{f_{k+2}} \biggr) \biggr], \label{eq:tocontrol3}
\end{eqnarray}
where equation (\ref{eq:tocontrol}) follows
using equations (\ref{eq:hkdef}) and (\ref{eq:modified}), respectively,
and equation~(\ref{eq:tocontrol2}) uses the adjoint of $\nabla_2$.
Finally equation (\ref{eq:tocontrol3}) follows from Condition \ref{cond:delta}
and equation (\ref{eq:replace}), using the fact that both terms are positive.

Using equation (\ref{eq:htildenew}), we can write the
first term in the square bracket in equation~(\ref{eq:tocontrol3}) as
\[
\frac{(g_{k+1} A_k + g_k B_k)}{2 f_{k+1} g_k g_{k+1}} \biggl( \frac{g_k^2}{f_k} + \frac{g_{k+1}^2}{f_{k+2}} \biggr).
\]
Further, since we can write
\[
- \biggl( \frac{ g_k^2}{f_k} - 2 \frac{g_k g_{k+1}}{f_{k+1}} + \frac
{g_{k+1}^2}{f_{k+2}} \biggr)
= - \frac{g_k}{f_k f_{k+1}} A_k - \frac{g_{k+1}}{f_{k+1} f_{k+2}} B_k,
\]
we can expand equation (\ref{eq:tocontrol3}) in terms of $A_k$ and
$B_k$ as
%
\begin{eqnarray}
\label{eq:remainderterm} &&- \frac{ ( A_k g_{k+1} - B_k g_k  )
 ( f_{k+2} g_k^2 - f_k g_{k+1}^2  )}{2 f_k f_{k+1} f_{k+2}
g_k g_{k+1}}
\nonumber
\\[-8pt]
\\[-8pt]
\nonumber
&&\qquad= - \frac{ f_k f_{k+1} f_{k+2}}{2 g_k g_{k+1} } \biggl(
\frac{ g_k^2}{f_k
f_{k+1}} - \frac{ g_{k+1}^2}{f_{k+1} f_{k+2}} \biggr)^2.
\end{eqnarray}
Here the final equality follows
since the form of $A_k$ and $B_k$ means that the two bracketed terms in
the first expression in equation (\ref{eq:remainderterm}) are in fact equal.
\end{pf}
It would be of interest to understand how this remainder term (\ref
{eq:remainderterm}) relates to the corresponding term found for
continuous interpolations in
equation (\ref{eq:velchange}).
%
\section{Shepp--Olkin interpolation as a constant velocity path}
\label
{sec:soint}

Recall that in Conjecture \ref{conj:SObis}
we are given $n\geq1$ affine functions $p_i \dvtx [0,1] \rightarrow[0,1]$
where for each $i$,
$p_i(t) = p_i(0)(1-t) + p_i(1) t$.
We denote by $(f_k(t))_{k =0,1,
\ldots, n}$
the distribution of the sum of independent Bernoulli random variables of
parameters $p_1(t),\ldots, p_n(t)$. Further we write $(f_k^{(i)}(t))_{k
=0, \ldots, n-1}$ for the mass function
of the $i$th ``leave one out'' sum---that is, the distribution of a
Bernoulli sum with parameters
$p_1(t),\ldots,p_{i-1}(t),p_{i+1}(t), \ldots, p_n(t)$, and
$f_k^{(i,j)}(t)$ for the ``leave two out'' sum, involving
all parameters except $p_i(t)$ and $p_j(t)$. Define
%
\begin{eqnarray}
D_k^{(i)} & = & \bigl( f^{(i)}_k\bigr)
^2 - f_{k-1}^{(i)} f_{k+1}^{(i)},\label
{eq:dkdef}
\\
E_k^{(i)} & = & f^{(i)}_k
f^{(i)}_{k-1} - f_{k-2}^{(i)}
f_{k+1}^{(i)}, \label{eq:ekdef}
\end{eqnarray}
with corresponding notation for $D_k$, $D_k^{(i,j)}$ and so on.

We now show how the Shepp--Olkin problem can be viewed in the framework
we introduced in Section~\ref{sec:discrete}. To be
specific,
if each derivative $p_i'$ is positive, the Shepp--Olkin interpolation
is a
constant velocity path with velocity $v$ in the sense of Definition~\ref
{def:modtransp}.
That is:

\begin{proposition} \label{prop:alphaformula} If all the $p_i' \geq
0$, then
the probability mass function defined by the Shepp--Olkin interpolation
satisfies
a modified transport equation,
%
\begin{equation}
\label{eq:ModifiedTransport} \frac{\partial f_k}{\partial t}(t) + \nabla_1( v g_k) =
0.
\end{equation}
Here we set $v:= \sum_{i=1}^n p_i'$,
and write the probability mass function
%
\begin{equation}
\label{eq:gdef} g_k(t) := \bigl(1-\alpha_k(t)\bigr)
f_k(t) + \alpha_{k+1}(t) f_{k+1}(t),
\end{equation}
where
%
\begin{equation}
\alpha_k(t) = \frac{\sum_{i=1}^n p_i' p_i(t) f_{k-1}^{(i)}(t)}{v f_k(t)} \label{eq:alphaDefinition} = 1-
\frac{\sum_{i=1}^n p_i' (1-p_i(t)) f_k^{(i)}(t)}{v f_k(t)}.
\end{equation}
Observe that $\alpha_0(t) \equiv0$ and $\alpha_n(t) \equiv1$, and that
if all $p_i' \geq0$, then $0 \leq\alpha_k(t) \leq1$ for all $k$ and $t$.
Further, this interpolation satisfies a second-order modified transport
equation of the form
$\frac{\partial^2 f_k}{\partial t^2}(t) = v^2 \nabla_2  ( h_k
)$, where
%
\begin{equation}
\label{eq:sohk} h_k = \frac{\sum_{i \neq j} p_i' p_j' f_k^{(i,j)}}{v^2}.
\end{equation}
\end{proposition}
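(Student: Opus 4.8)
The plan is to reduce everything to the single structural identity obtained by conditioning on the value of the $i$th Bernoulli variable, namely
\[
f_k(t) = p_i(t) f_{k-1}^{(i)}(t) + (1 - p_i(t)) f_k^{(i)}(t),
\]
valid for each $i$ and every $k$, together with the fact that $f_k^{(i)}$ does not depend on $p_i$ (and $f_k^{(i,j)}$ on neither $p_i$ nor $p_j$). Differentiating this with respect to $p_i$ gives $\partial f_k/\partial p_i = f_{k-1}^{(i)} - f_k^{(i)} = -\nabla_1 f_k^{(i)}$, and since each $p_i$ is affine the chain rule yields $\frac{\partial f_k}{\partial t} = \sum_{i=1}^n p_i' \frac{\partial f_k}{\partial p_i} = -\nabla_1\!\bigl(\sum_i p_i' f_k^{(i)}\bigr)$, which already has the form of \eqref{eq:ModifiedTransport}. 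It then remains only to identify $\sum_i p_i' f_k^{(i)}$ with $v g_k$.

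For that I would first check that the two expressions for $\alpha_k$ in \eqref{eq:alphaDefinition} agree: multiplying the leave-one-out identity by $p_i'$ and summing gives $v f_k = \sum_i p_i' p_i f_{k-1}^{(i)} + \sum_i p_i'(1-p_i) f_k^{(i)}$, which is precisely the equality of numerators. Hence $v\,\alpha_{k+1} f_{k+1} = \sum_i p_i' p_i f_k^{(i)}$ and $v\,(1-\alpha_k) f_k = \sum_i p_i'(1-p_i) f_k^{(i)}$, so adding and using \eqref{eq:gdef} gives $v g_k = \sum_i p_i' f_k^{(i)}$, completing the first-order equation. The boundary and positivity claims are then immediate: $f_{-1}^{(i)} \equiv 0$ forces $\alpha_0 \equiv 0$, the leave-one-out sum is supported on $\{0,\dots,n-1\}$ so $f_n^{(i)} \equiv 0$ and the second formula in \eqref{eq:alphaDefinition} gives $\alpha_n \equiv 1$; and when every $p_i' \geq 0$ the first formula exhibits $\alpha_k$ as a ratio of nonnegative quantities while the second exhibits $1-\alpha_k$ the same way, so $0 \leq \alpha_k \leq 1$ (where $f_k(t) = 0$ one simply reads $g_k$ off from $v g_k = \sum_i p_i' f_k^{(i)}$).

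For the second-order equation I would iterate one level down: for $j \neq i$, conditioning $f^{(i)}$ on its $j$th variable gives $f_k^{(i)} = p_j f_{k-1}^{(i,j)} + (1-p_j) f_k^{(i,j)}$, whence $\partial f_k^{(i)}/\partial t = -\nabla_1\!\bigl(\sum_{j\neq i} p_j' f_k^{(i,j)}\bigr)$, and substituting into the $t$-derivative of the first-order equation gives
\[
\frac{\partial^2 f_k}{\partial t^2} = -\nabla_1\!\Bigl(\sum_i p_i' \tfrac{\partial f_k^{(i)}}{\partial t}\Bigr) = \nabla_1\nabla_1\!\Bigl(\sum_{i\neq j} p_i' p_j' f_k^{(i,j)}\Bigr) = v^2 \nabla_2(h_k),
\]
with $h_k$ exactly as in \eqref{eq:sohk}. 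To reconcile this $h_k$ with the general expression \eqref{eq:ModifiedTransport2} coming from Lemma \ref{lem:pde2}, I would compute $\partial\alpha_{k+1}/\partial t$ from \eqref{eq:alphaDefinition} using the derivative formulas already obtained and substitute; this is a routine (if lengthy) algebraic check, which I would present briefly.

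The only genuine obstacle here is bookkeeping: the substance is the observation that the leave-one-out recursion both becomes a discrete gradient upon $p_i$-differentiation and nests cleanly to second order, after which everything is careful manipulation of $f_k = p_i f_{k-1}^{(i)} + (1-p_i) f_k^{(i)}$. The one point requiring real care is ensuring that the guessed coefficients \eqref{eq:alphaDefinition} and \eqref{eq:sohk} genuinely coincide with the abstractly-defined $g_k$, $\alpha_k$ and $h_k$ of Definition \ref{def:modtransp}, rather than merely solving the same equations modulo the kernels of $\nabla_1$ and $\nabla_2$.
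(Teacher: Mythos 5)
Your proposal is correct and is essentially the paper's own argument: the paper differentiates the probability generating function $\prod_i(1-p_i+sp_i)$ and compares coefficients of $s$, which is exactly your differentiation of the leave-one-out identity $f_k = p_i f_{k-1}^{(i)} + (1-p_i)f_k^{(i)}$ in a different notation, and the identification $vg_k = \sum_i p_i' f_k^{(i)}$ and the second-order iteration to $f_k^{(i,j)}$ proceed identically. Your closing remark about reconciling \eqref{eq:sohk} with the $h_k$ of \eqref{eq:ModifiedTransport2} is a legitimate point of care that the paper itself does not address within this proof.
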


\begin{pf}
Observe that by definition, the $f_k$ have probability generating function
\[
\sum_{k=0}^n f_k(t)
s^k = \prod_{i=1}^n
\bigl(1-p_i(t) + s p_i(t) \bigr),
\]
which has derivative with respect to $t$ given by
%
\begin{equation}
\label{eq:pgfder1} \sum_{k=0}^n
\frac{\partial
f_k}{\partial t}(t) s^k = \sum_{i=1}^n
p_i'(s-1) \prod_{j \neq i}
\bigl(1-p_j(t) + s p_j(t) \bigr).
\end{equation}
Comparing coefficients of $s$, we see that
$\frac{\partial f_k}{\partial t} + \nabla_1  ( v \widetilde{g}_k(t)
 ) =0$,
where $\widetilde{g}_k(t) := \frac{1}{v} \sum_{i=1}^n p_i' f_k^{(i)}$.
Substituting equation (\ref{eq:alphaDefinition}) in equation (\ref
{eq:gdef}), we obtain
that $\widetilde{g}_k(t) = g_k(t)$, and so equation (\ref
{eq:ModifiedTransport}) follows.

The values of $\alpha_0(t)$ and $\alpha_n(t)$ follow from the fact that
$f_{-1}^{(i)} = f_{n}^{(i)} = 0$.
The positivity of $\alpha_k(t)$ and $1-\alpha_k(t)$ follow from
the assumption that $p_i' \geq0$, meaning that all the terms in both
the fractions in equation
(\ref{eq:alphaDefinition}) are positive.

The form of $h_k$ stated in equation (\ref{eq:sohk}) follows by taking
a further derivative of equation (\ref{eq:pgfder1})
to obtain
%
\begin{equation}
\label{eq:pgfder2} \sum_{k=0}^n
\frac{\partial^2
f_k}{\partial t^2}(t) s^k = \sum_{i=1}^n
\sum_{ j \neq i} p_i'
p_j' (s-1)^2 \prod
_{\ell\neq i,j} \bigl(1-p_\ell(t) + s p_\ell(t)
\bigr),
\end{equation}
and again the result follows on comparing coefficients of $s$.
\end{pf}
For clarity, we now suppress the explicit dependence of $\alpha_k$ on $t$.
We first verify the $k$-monotonicity Condition \ref{cond:kmon}
for Shepp--Olkin interpolations with $p_i' \geq0$.%

\begin{proposition} \label{prop:alphaMonotonicity}
For the Shepp--Olkin interpolation described above, if $p_i' \geq0$
for all $i$, then
the coefficients $(\alpha_k)_{k = 0, 1, \ldots, n}$ satisfy the
inequality $\alpha_k \leq\alpha_{k+1}$;
that is, $k$-monotonicity (Condition \ref{cond:kmon}) holds.
\end{proposition}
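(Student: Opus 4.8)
The plan is to work directly with the formula \eqref{eq:alphaDefinition} for $\alpha_k$ and reduce $k$-monotonicity to a pointwise inequality among the "leave-one-out" Bernoulli sums. Writing $\alpha_k = \frac{\sum_i p_i' p_i(t) f_{k-1}^{(i)}}{v f_k}$, the claim $\alpha_k \le \alpha_{k+1}$ becomes, after clearing the (positive) denominators,
\begin{equation*}
f_{k+1} \sum_{i=1}^n p_i' p_i f_{k-1}^{(i)} \;\le\; f_{k} \sum_{i=1}^n p_i' p_i f_{k}^{(i)}.
\end{equation*}
The first step is to expand $f_k$ and $f_{k+1}$ themselves in terms of the leave-one-out sums via the elementary recursions $f_k = (1-p_i) f_k^{(i)} + p_i f_{k-1}^{(i)}$ (for each fixed $i$), and similarly to use the leave-two-out sums $f_k^{(i,j)}$ when a product $f^{(i)} f^{(j)}$ appears. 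This turns both sides into double sums $\sum_{i,j} p_i' p_i \cdot (\text{something})$ over bilinear expressions in the $f^{(i,j)}$'s.

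The second step is to symmetrize: group the terms with indices $(i,j)$ and $(j,i)$ together, so that the inequality becomes a sum over unordered pairs $\{i,j\}$ (plus possibly diagonal terms $i=j$, which should cancel or be manifestly nonnegative). I expect each paired term to reduce, after using $f_{k-1}^{(i)} = (1-p_j)f_{k-1}^{(i,j)} + p_j f_{k-2}^{(i,j)}$ and the analogous identities, to a nonnegative multiple of a log-concavity-type quantity in the leave-two-out sum — something of the shape $D_k^{(i,j)}$ or $E_k^{(i,j)}$ as defined in \eqref{eq:dkdef}–\eqref{eq:ekdef}, or a combination thereof. The key external input is that Bernoulli sums are ultra-log-concave (Newton's inequalities, cited in the paper), hence log-concave, so that $f_{k+1}^{(i,j)} f_{k-1}^{(i,j)} \le (f_k^{(i,j)})^2$ and the relevant $D$- and $E$-quantities have a definite sign; the coefficients $p_i' p_i (1-p_j) p_j \ge 0$ etc. supply the needed positivity.

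The main obstacle will be the bookkeeping in the symmetrization step: after the double expansion there are many terms, and it is not obvious a priori that the cross terms organize into a single recognizable nonnegative quantity per pair $\{i,j\}$ rather than an indefinite combination. The delicate point is to choose which of the several recursions to apply to which factor so that the difference (right side minus left side) collapses to $\sum_{i<j} p_i' p_j'(\cdots) \bigl( p_i(1-p_j) f^{(i,j)}_{\bullet} f^{(i,j)}_{\bullet} - \cdots \bigr)$ with each bracket a genuine log-concavity inequality for the $(n-2)$-fold sum. I would anticipate that the natural grouping produces terms like $p_i' p_j'\,(p_i - p_j)\,[\,\cdots\,]$ multiplying a quantity that is itself sign-definite, or alternatively a sum of squares / products of the form $(p_i f^{(i,j)}_{k-1} - p_j f^{(i,j)}_{k-1})(\cdots)$; verifying that this genuinely has the right sign for all admissible parameter values is the crux, and is where one must lean on the ULC property rather than on bare algebra.
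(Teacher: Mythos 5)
Your starting point is the right one: after clearing denominators the claim is exactly the nonnegativity of $\sum_{i} p_i' p_i \bigl[ f_k^{(i)} f_k - f_{k-1}^{(i)} f_{k+1} \bigr]$. But from there your plan has a genuine gap. You propose to expand into leave-two-out sums, symmetrize over pairs $\{i,j\}$, and hope that the cross terms organize into sign-definite combinations of $D_k^{(i,j)}$ and $E_k^{(i,j)}$; you yourself identify the verification of that sign as ``the crux'' and do not carry it out. As written, the hardest step of the argument is missing, and it is not at all clear that the pairing you describe produces a single recognizable nonnegative quantity per pair rather than an indefinite combination.

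The point you have missed is that no symmetrization or leave-two-out expansion is needed: the inequality already holds \emph{term by term} in $i$. For each fixed $i$, expand $f_k$ and $f_{k+1}$ using the recursion with that \emph{same} index,
\begin{equation*}
f_k^{(i)} f_k - f_{k-1}^{(i)} f_{k+1}
= f_{k}^{(i)}\bigl[(1-p_i)f_{k}^{(i)}+p_i f_{k-1}^{(i)}\bigr]-f_{k-1}^{(i)}\bigl[(1-p_i)f_{k+1}^{(i)}+p_i f_{k}^{(i)}\bigr]
= (1-p_i)\bigl[(f_{k}^{(i)})^2-f_{k-1}^{(i)}f_{k+1}^{(i)}\bigr].
\end{equation*}
The $p_i$ terms cancel, and what remains is $(1-p_i) D_k^{(i)} \ge 0$ by log-concavity of the single leave-one-out Bernoulli sum $f^{(i)}$ (Newton's inequalities). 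Multiplying by $p_i' p_i \ge 0$ and summing over $i$ finishes the proof. This is the paper's argument; the leave-two-out sums and the pair-by-pair bookkeeping you anticipate are genuinely needed elsewhere in the paper (e.g.\ for the GLC property and for Condition \ref{cond:delta}), but not here.
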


\begin{pf}
If $k$ is such that $f_k>0$ and $f_{k+1}>0$, then
\[
\alpha_{k+1}-\alpha_{k} = \frac{\sum_{i=1}^n p_i'p_i  [f_k^{(i)}
f_k - f_{k-1}^{(i)} f_{k+1}  ]}{(\sum_{i=1}^n p_i') f_k f_{k+1}}.
\]
Moreover, for $i \in\{1,\ldots, n\}$, we have
\begin{eqnarray*}
&&f_k^{(i)} f_k - f_{k-1}^{(i)}
f_{k+1} \\
&&\qquad= f_{k}^{(i)} \bigl[(1-p_i)f_{k}^{(i)}+p_i
f_{k-1}^{(i)} \bigr]-f_{k-1}^{(i)}
\bigl[(1-p_i)f_{k+1}^{(i)}+p_i
f_{k}^{(i)} \bigr]
\\
&&\qquad= (1-p_i) \bigl[\bigl(f_{k}^{(i)}
\bigr)^2-f_{k-1}^{(i)}f_{k+1}^{(i)}
\bigr] \geq0,
\end{eqnarray*}
by the log-concavity property of the Bernoulli sum $f^{(i)}$. The fact
that each $p_i'$ is nonnegative finally proves that $\alpha_k \leq
\alpha_{k+1}$.
\end{pf}

\begin{remark} \label{rem:symm}
In the Shepp--Olkin case, $\alpha_k$ is a conditional expectation of a weighted
sum, similarly to the ``scaled score function'' of \cite{johnson11}.
This follows since writing $B_i$ for a
Bernoulli random variable with parameter $p_i(t)$, we obtain
$\pr(B_i = 1 | B_1 + \cdots+ B_n = k) = p_i(t)
f_{k-1}^{(i)}(t)/f_k(t)$ so that
equation \eqref{eq:alphaDefinition} can be expressed as
%
\begin{equation}
\alpha_k(t) = \ep \Biggl( \sum_{i=1}^n
\lambda_i B_i \Bigg\vert \sum
_{i=1}^n B_i = k \Biggr),
\end{equation}
where weights $\lambda_i = p_i'/(\sum p_i')$. Note that in particular,
in the ``symmetric'' case where $p_i' \equiv p'$ for all $i$, then
$\lambda_i' = 1/n$ and $\alpha_k(t) \equiv k/n$.

This conditional expectation characterization
allows us to give an alternative proof of the $k$-monotonicity
Proposition~\ref{prop:alphaMonotonicity}. A
result of Efron \cite{efron} (see also~\cite{johnson13}, equation
(3.1)) shows that if
$\phi( u_1, \ldots, u_n)$ is an increasing function in each variable
and $X_1, \ldots, X_n$ are independent log-concave
random variables, then $\Phi(k) := \ep [ \phi(X_1, \ldots,
X_n) |
X_1 + \cdots+ X_n = k  ]$ is
an increasing function of $k$. Applying this to $\phi(B_1, \ldots, B_n)
= \sum\lambda_i B_i$, the result follows.
\end{remark}

We now prove that in the monotone Shepp--Olkin
case the Bernoulli sum mass function is ${\mathbf{GLC}}({\bolds
{\alpha }})$, for the
natural choice of ${\bolds{\alpha}}$, and hence Condition \ref
{cond:glc} holds.

\begin{proposition} \label{prop:GLC} For the Shepp--Olkin interpolation,
taking ${\bolds{\alpha}}$ as defined in equation (\ref
{eq:alphaDefinition}),
if all the $p_i'$ are positive, then
the Bernoulli sum mass function $(f_k)_{k = 0, 1, \ldots, n}$ is
${\mathbf{GLC}}({\bolds{\alpha}})$, and Condition
\ref{cond:glc} holds.
\end{proposition}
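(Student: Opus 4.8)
The plan is to prove Condition~\ref{cond:glc}, that $\GLC{\alpha}_k \ge 0$ for $k = 0,\dots,n-2$, by exhibiting $v^2\,\GLC{\alpha}_k$ as a manifestly nonnegative sum indexed by pairs. First I would clear denominators, working throughout with the honest nonnegative quantities $\alpha_m f_m = v^{-1}\sum_i p_i' p_i f_{m-1}^{(i)}$ and $(1-\alpha_m) f_m = v^{-1}\sum_i p_i'(1-p_i) f_m^{(i)}$ read off from the two forms of~(\ref{eq:alphaDefinition}); with this reading each identity below is a polynomial identity, valid even where some $f_m$ vanish. Then
\begin{align*}
v^2\,\alpha_{k+1}(1-\alpha_{k+1}) f_{k+1}^2 &= \Bigl(\sum_i p_i' p_i f_k^{(i)}\Bigr)\Bigl(\sum_j p_j'(1-p_j) f_{k+1}^{(j)}\Bigr),\\
v^2\,\alpha_{k+2}(1-\alpha_k) f_k f_{k+2} &= \Bigl(\sum_i p_i' p_i f_{k+1}^{(i)}\Bigr)\Bigl(\sum_j p_j'(1-p_j) f_k^{(j)}\Bigr).
\end{align*}
Subtracting, relabelling $i \leftrightarrow j$ in one of the double sums, and using $p_i(1-p_j) - p_j(1-p_i) = p_i - p_j$ collapses this to
\begin{equation*}
v^2\,\GLC{\alpha}_k \;=\; \sum_{i,j} p_i' p_j'\,(p_i - p_j)\, f_k^{(i)} f_{k+1}^{(j)} \;=\; \sum_{i<j} p_i' p_j'\,(p_i - p_j)\bigl(f_k^{(i)} f_{k+1}^{(j)} - f_k^{(j)} f_{k+1}^{(i)}\bigr),
\end{equation*}
the diagonal terms being zero.

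The heart of the argument is then the identity $f_k^{(i)} f_{k+1}^{(j)} - f_k^{(j)} f_{k+1}^{(i)} = (p_i - p_j)\, D_k^{(i,j)}$. I would prove it by conditioning each leave-one-out sum on its missing Bernoulli, i.e. substituting $f_m^{(i)} = (1-p_j) f_m^{(i,j)} + p_j f_{m-1}^{(i,j)}$ and $f_m^{(j)} = (1-p_i) f_m^{(i,j)} + p_i f_{m-1}^{(i,j)}$ and expanding: the $f^{(i,j)}_k f^{(i,j)}_{k+1}$ and $f^{(i,j)}_{k-1} f^{(i,j)}_k$ contributions cancel, the coefficient of $(f_k^{(i,j)})^2$ comes out as $p_i - p_j$ and that of $f_{k-1}^{(i,j)} f_{k+1}^{(i,j)}$ as $-(p_i - p_j)$, leaving exactly $(p_i - p_j)\,D_k^{(i,j)}$ with $D_k^{(i,j)} = (f_k^{(i,j)})^2 - f_{k-1}^{(i,j)} f_{k+1}^{(i,j)}$ as in~(\ref{eq:dkdef}).

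Putting the two displays together gives $v^2\,\GLC{\alpha}_k = \sum_{i<j} p_i' p_j'\,(p_i - p_j)^2\, D_k^{(i,j)}$, which is nonnegative term by term: $p_i', p_j' \ge 0$ by hypothesis, $(p_i - p_j)^2 \ge 0$, and $D_k^{(i,j)} \ge 0$ by the log-concavity of the leave-two-out Bernoulli sum $f^{(i,j)}$ (the same fact used in the proof of Proposition~\ref{prop:alphaMonotonicity}, coming from Newton's inequalities); at the boundary indices $k = 0$ or $k = n-2$ one outer term of $D_k^{(i,j)}$ simply drops out and positivity persists. Since $v > 0$, this yields $\GLC{\alpha}_k \ge 0$ for all $k$, i.e. Condition~\ref{cond:glc} holds. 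The only step needing genuine insight is the collapse identity of the second paragraph; the rest is bookkeeping. One may read the final expression as a discrete shadow of the concavity $V''(t) \le 0$ of the variance $V(t) = \sum_i p_i(t)(1-p_i(t))$ exploited in Theorem~\ref{th:SOCont}, with the negative curvature supplied pairwise by log-concavity of the sub-sums.
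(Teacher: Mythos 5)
Your proposal is correct and follows essentially the same route as the paper: both start from Equation~(\ref{eq:GLCequiv}) (i.e.\ $v^2\,\GLC{\alpha}_k$ written as a difference of products of the sums $\sum_i p_i'p_i f_\cdot^{(i)}$ and $\sum_i p_i'(1-p_i)f_\cdot^{(i)}$), observe that the diagonal terms cancel, and reduce each off-diagonal pair to $p_i'p_j'(p_i-p_j)^2 D_k^{(i,j)}$, concluding by log-concavity of the leave-two-out Bernoulli sums. The only difference is that you spell out the intermediate identity $f_k^{(i)}f_{k+1}^{(j)}-f_k^{(j)}f_{k+1}^{(i)}=(p_i-p_j)D_k^{(i,j)}$, which the paper compresses into the phrase ``simplifies to''.
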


\begin{pf}
Formula \eqref{eq:alphaDefinition} shows that $\operatorname{GLC}({\bolds
{\alpha}})_k$ can be
written as $1/v^2$ times
%
\begin{eqnarray}
\label{eq:GLCequiv} &&\biggl(\sum_i
p_i' p_i f_{k}^{(i)}
\biggr)\cdot \biggl(\sum_i p_i'
(1-p_i) f_{k+1}^{(i)} \biggr)
\nonumber
\\[-8pt]
\\[-8pt]
\nonumber
&&\qquad{}- \biggl(\sum
_i p_i'
(1-p_i) f_{k}^{(i)} \biggr)\cdot \biggl(\sum
_i p_i'
p_i f_{k+1}^{(i)} \biggr).
\end{eqnarray}
Expanding this expression as a quadratic form in $p_1', \ldots, p_n'$,
the coefficients of $p_i'^2$ vanish, leaving an expression which
simplifies to
\[
\sum_{i < j} p_i'
p_j' (p_i - p_j)^2
\bigl[ \bigl( f_k^{(i,j)} \bigr)^2 -
f_{k-1}^{(i,j)} f_{k+1}^{(i,j)} \bigr].
\]
The positivity of this expression, and hence the ${\mathbf
{GLC}}({\bolds{\alpha}})$
property, follow from
the log-concavity of $(f_{k}^{(i,j)})_{k = 0, \ldots, n-2}$ and
positivity of $p_i'$.
\end{pf}

\section{Entropy concavity for monotone Shepp--Olkin regime} \label
{sec:soconc}

We now show that entropy is concave in the monotone Shepp--Olkin regime.
Having already verified Conditions \ref{cond:kmon} and \ref{cond:glc}
in Propositions \ref{prop:alphaMonotonicity} and \ref{prop:GLC},
Theorem~\ref{thm:cond1-4} shows that
concavity of entropy follows if
Condition \ref{cond:delta} holds.

\begin{proposition} For monotone Shepp--Olkin interpolations,
Condition \ref{cond:delta} holds if
%
\begin{eqnarray}
\label{eq:diffexp}&& \sum_{i < j} \bigl(
p_i'^2 p_j(1-p_j)
b_{i,j} + p_j'^2
p_i(1-p_i) b_{j,i}
\nonumber
\\[-8pt]
\\[-8pt]
\nonumber
&&\hspace*{13pt}\qquad{}+ 2 p_i'
p_j' p_i (1-p_i)
p_j (1-p_j) c_{i,j} \bigr) \geq0,
\end{eqnarray}
where $b_{i,j}$ and $c_{i,j}$ are defined in equations (\ref{eq:bdef})
and (\ref{eq:cdef}) below.
\end{proposition}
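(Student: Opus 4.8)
The plan is to take Condition \ref{cond:delta}, which requires $h_k \leq \wt{h}_k$ for all $k$, and rewrite the difference $\wt{h}_k - h_k$ explicitly in terms of the Bernoulli sum data. Both $h_k$ and $\wt h_k$ are already available to us in the Shepp--Olkin setting: $h_k$ is given by Equation \eqref{eq:sohk} as $\frac{1}{v^2}\sum_{i\neq j}p_i'p_j' f_k^{(i,j)}$, while $\wt h_k$ is defined by Equation \eqref{eq:htdef} in terms of $g_k = g_k^{(\alpha)}$, the $f_k$'s and $D_{k+1} = f_{k+1}^2 - f_k f_{k+2}$. The first step is to substitute the explicit formula \eqref{eq:alphaDefinition} for $\alpha_k$ (equivalently, the representation $\widetilde g_k = \frac1v\sum_i p_i' f_k^{(i)}$ from the proof of Proposition \ref{prop:alphaformula}) into the expression \eqref{eq:htildenew} for $\wt h_k$, so that $\wt h_k D_{k+1}$ becomes a polynomial expression in the leave-one-out and leave-two-out mass functions $f_k^{(i)}, f_k^{(i,j)}$ and the parameters $p_i, p_i'$. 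Similarly $h_k D_{k+1}$ is a polynomial in these quantities (using $f_k^{(i)} = (1-p_i)f_k^{(i,j)} + p_i f_{k-1}^{(i,j)}$ to express everything at the leave-two-out level, and the analogous identity for $f_k$).

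The second step is to expand $(\wt h_k - h_k) D_{k+1}$ as a quadratic form in the velocities $p_1', \ldots, p_n'$, exactly as in the proofs of Proposition \ref{prop:GLC} and Proposition \ref{prop:alphaMonotonicity}. I expect the diagonal terms $p_i'^2$ to survive this time (unlike in Proposition \ref{prop:GLC}), contributing the $p_i'^2 p_j(1-p_j) b_{i,j}$ pieces, with the cross terms $p_i'p_j'$ contributing the $c_{i,j}$ piece; here the coefficients $b_{i,j}$ and $c_{i,j}$ will be certain quadratic (or bilinear) expressions in the $f^{(i,j)}$'s — these are presumably the quantities the statement refers to as defined in the forthcoming Equations \eqref{eq:bdef} and \eqref{eq:cdef}. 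Since $D_{k+1} \geq 0$ (log-concavity of the Bernoulli sum), the inequality $\wt h_k \geq h_k$ is equivalent to the nonnegativity of this quadratic form summed appropriately, and collecting terms by unordered pairs $\{i,j\}$ yields precisely the claimed inequality \eqref{eq:diffexp}. One should also handle the degenerate cases where $D_{k+1} = 0$ or some $f_k$ vanishes separately, as in the earlier propositions.

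The main obstacle I anticipate is the bookkeeping in the second step: cleanly organizing the expansion so that all $p_i'^2$-coefficients and all $p_i'p_j'$-coefficients assemble into recognizable log-concavity-type expressions in the leave-two-out masses. The delicate point is that $\wt h_k$ involves $D_{k+1}$ in the denominator and $g_k$ quadratically, so after clearing denominators one gets a genuinely degree-four object in the $f$'s; the art is to use the product rule identities relating $f, f^{(i)}, f^{(i,j)}$ to force massive cancellation (the $A_k, B_k$ notation of Proposition \ref{prop:fghIneq1}, rewritten in Shepp--Olkin terms, should help organize this). A secondary subtlety is sign control: the factor $p_i(1-p_i)p_j(1-p_j)$ multiplying $c_{i,j}$ suggests that $c_{i,j}$ itself need not be sign-definite, so the reduction here only produces the \emph{sufficient} condition \eqref{eq:diffexp} rather than a term-by-term nonnegativity — establishing \eqref{eq:diffexp} itself is deferred to the subsequent cubic inequality (Theorem \ref{th:Symmetric}) and Lemma \ref{lem:checkcond4}.
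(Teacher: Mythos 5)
Your overall route is the paper's: you clear the denominator $D_{k+1}\geq 0$, write the numerator of $\wt{h}_k$ as $g_{k+1}A_k+g_kB_k$, expand $v^2\left(g_{k+1}A_k+g_kB_k\right)-D_{k+1}\sum_{i\neq j}p_i'p_j'f_k^{(i,j)}$ as a quadratic form in $p_1',\ldots,p_n'$ at the leave-two-out level, and collect by unordered pairs. Your remarks that this step only produces the \emph{sufficient} condition \eqref{eq:diffexp}, with its verification deferred to Theorem \ref{th:Symmetric} and Lemma \ref{lem:checkcond4}, are also correct.

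There is, however, one concrete missing idea, and it sits exactly where you locate the ``bookkeeping'' difficulty. In the Shepp--Olkin case $A_k=\frac{1}{v}\sum_ip_i'p_iD_k^{(i)}$ and $B_k=\frac{1}{v}\sum_ip_i'(1-p_i)D_{k+1}^{(i)}$, so the diagonal part of the quadratic form is $\sum_i p_i'^2\bigl(f_{k+1}^{(i)}p_iD_k^{(i)}+f_k^{(i)}(1-p_i)D_{k+1}^{(i)}\bigr)$: a \emph{single} sum over $i$, expressed entirely in leave-one-out masses. To reach \eqref{eq:diffexp} you must convert this into $\sum_i p_i'^2\sum_{j\neq i}p_j(1-p_j)b_{i,j}$ with $b_{i,j}$ depending only on the leave-two-out masses, so that each pair $\{i,j\}$ contributes a self-contained quadratic in $(p_i',p_j')$. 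The substitution $f_m^{(i)}=(1-p_j)f_m^{(i,j)}+p_jf_{m-1}^{(i,j)}$ you propose cannot do this: it refers to one fixed $j$ and produces coefficients $(1-p_j)^2$, $p_j(1-p_j)$, $p_j^2$ rather than the pure variance weight $p_j(1-p_j)$ summed over all $j\neq i$. What is actually needed is the identity of Lemma \ref{lem:soiBvar}, namely $qf_kf_{k-q}=\sum_jp_j(1-p_j)\bigl[f_{k-1}^{(j)}f_{k-q}^{(j)}-f_k^{(j)}f_{k-q-1}^{(j)}\bigr]$, proved by a separate induction on the number of parameters; applied with $q=1,2$ to the $(n-1)$-fold sum $f^{(i)}$ it gives $f_k^{(i)}f_{k-1}^{(i)}=\sum_{j\neq i}p_j(1-p_j)D_{k-1}^{(i,j)}$ and $f_k^{(i)}f_{k-2}^{(i)}=\frac{1}{2}\sum_{j\neq i}p_j(1-p_j)E_{k-1}^{(i,j)}$, whence the form \eqref{eq:bdef} of $b_{i,j}$. (The cross terms, by contrast, do succumb to the direct substitution you describe, yielding $p_i(1-p_i)p_j(1-p_j)c_{i,j}$ with $c_{i,j}$ as in \eqref{eq:cdef}.) Without this lemma your plan stalls at the diagonal terms. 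A minor further correction: $b_{i,j}$ and $c_{i,j}$ come out cubic, not quadratic, in the $f^{(i,j)}$.
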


\begin{pf}
We use the fact that [in the notation of equations (\ref{eq:akdef}) and
(\ref{eq:bkdef})] the numerator of $\wt{h}_k$
can be written as $g_{k+1} A_k + g_k B_k$, where
\begin{eqnarray*}
A_k &:=& (f_{k+1} g_k - f_k
g_{k+1})  =  \frac{1}{v} \sum_i
p_i' p_i D_k^{(i)}
\geq0,
\\
B_k &:=& (f_{k+1} g_{k+1} - f_{k+2}
g_k)  =  \frac{1}{v} \sum_i
p_i' (1-p_i) D^{(i)}_{k+1}
\geq0.
\end{eqnarray*}
This means [using the expression for $h_k$ from equation (\ref
{eq:sohk}) above] that Condition~\ref{cond:delta} is equivalent to the
positivity of
%
\begin{eqnarray}
\label{eq:toprove}&& v^2 ( g_{k+1} A_k +
g_k B_k ) - D_{k+1} \sum
_{i \neq j} p_i' p_j'
f_k^{(i,j)}
\nonumber
\\
&&\qquad =  \sum_i \bigl(p_i'
\bigr)^2 \bigl( f_{k+1}^{(i)} p_i
D_k^{(i)} + f_k^{(i)}
(1-p_i) D_{k+1}^{(i)} \bigr)
\\
& &\qquad\quad{} + \sum_{i \neq j} p_i'
p_j' \bigl( f_{k+1}^{(j)}
p_i D_k^{(i)} + f_k^{(j)}
(1-p_i) D_{k+1}^{(i)\nonumber} - D_{k+1}
f^{(i,j)}_k \bigr).
\end{eqnarray}
We expand the two bracketed terms in equation (\ref{eq:toprove}) in
terms of $f_k^{(i,j)}$, using Lemma~\ref{lem:soiBvar} below, which implies [using the notation of equations
(\ref{eq:dkdef})
and (\ref{eq:ekdef})]
that
\begin{eqnarray*}
f_k^{(i)} f_{k-1}^{(i)} & = & \sum
_{j \neq i} p_j (1-p_j)
D_{k-1}^{(i,j)},
\\
f_k^{(i)} f_{k-2}^{(i)} & = &
\frac{1}{2} \sum_{j \neq i} p_j
(1-p_j) E_{k-1}^{(i,j)}.
\end{eqnarray*}
First observe that
\begin{eqnarray*}
&&f_{k+1}^{(i)} p_i D_k^{(i)}
+ f_k^{(i)} (1-p_i) D_{k+1}^{(i)}
\\
&&\qquad =  p_i f_{k}^{(i)} \bigl[
f_k^{(i)} f_{k+1}^{(i)} \bigr] -
p_i f_{k+1}^{(i)} \bigl[ f_{k-1}^{(i)}
f_{k+1}^{(i)} \bigr] + (1-p_i)
f_{k+1}^{(i)} \bigl[ f_k^{(i)}
f_{k+1}^{(i)} \bigr]\\
&&\qquad\quad{} - (1- p_i)
f_{k}^{(i)} \bigl[ f_{k}^{(i)}
f_{k+2}^{(i)} \bigr]
\\
&&\qquad =  \sum_{j \neq i} p_j
(1-p_j) \frac{1}{2} \bigl( 2 p_i
f_k^{(i)} D_k^{(i,j)} - p_i
f_{k+1}^{(i)} E_k^{(i,j)} + 2
(1-p_i) f_{k+1}^{(i)} D_k^{(i,j)}
\\
&&\hspace*{249pt}{}- (1-p_i) f_k^{(i)} E_{k+1}^{(i,j)}
\bigr)
\\
&&\qquad =:  \sum_{j \neq i} p_j
(1-p_j) b_{i,j},
\end{eqnarray*}
where each term in square brackets is expanded using Lemma~\ref{lem:soiBvar}.
Further by writing $f_k^{(i)} = (1-p_j) f_{k}^{(i,j)} + p_j
f_{k-1}^{(i,j)}$, we can rearrange the expression for $b_{i,j}$ as
%
\begin{eqnarray}
\label{eq:bdef} b_{i,j} & = & \tfrac{1}{2} p_i
p_j \bigl( \bigl( f^{(i,j)}_k \bigr)
^2 f^{(i,j)}_{k-1} - 2 \bigl( f^{(i,j)}_{k-1}
\bigr)^2 f^{(i,j)}_{k+1} + f^{(i,j)}_{k-2}
f^{(i,j)}_{k} f^{(i,j)}_{k+1} \bigr)
\nonumber
\\
& &{} + \tfrac{1}{2} (1-p_i) (1- p_j) \bigl(
\bigl( f^{(i,j)}_k \bigr) ^2 f^{(i,j)}_{k+1}
- 2 \bigl( f^{(i,j)}_{k+1} \bigr)^2
f^{(i,j)}_{k-1} + f^{(i,j)}_{k+2}
f^{(i,j)}_{k} f^{(i,j)}_{k-1} \bigr)
\nonumber
\\[-8pt]
\\[-8pt]
\nonumber
& &{} + \tfrac{1}{2} (1-p_i) p_j \bigl( 2 \bigl(
f^{(i,j)}_k \bigr) ^3 - 3 f^{(i,j)}_{k-1}
f^{(i,j)}_k f^{(i,j)}_{k+1} + \bigl(
f^{(i,j)}_{k-1} \bigr)^2 f^{(i,j)}_{k+2}
\bigr)
\\
& &{} + \tfrac{1}{2} p_i (1-p_j) \bigl( 2 \bigl(
f^{(i,j)}_k \bigr) ^3 - 3 f^{(i,j)}_{k-1}
f^{(i,j)}_k f^{(i,j)}_{k+1} + \bigl(
f^{(i,j)}_{k+1} \bigr)^2 f^{(i,j)}_{k-2}
\bigr).\nonumber
\end{eqnarray}
Similarly, using simplifications such as the fact that
\[
D_k = (1-p_i)^2 D_{k-1}^{(i)}
+ p_i^2 D_{k}^{(i)} +
p_i (1-p_i) E_k^{(i)}
\]
the second bracket of equation (\ref{eq:toprove}) can be written as
$p_i (1-p_i) p_j(1-p_j) c_{i,j}$, where
%
\begin{eqnarray}
\label{eq:cdef} c_{i,j} & := & \bigl( f^{(i,j)}_{k+1}
E_k^{(i,j)} - f^{(i,j)}_k
D^{(i,j)}_k - f^{(i,j)}_{k+2}
D^{(i,j)}_{k-1} \bigr)
\nonumber
\\
& = & - \bigl( f^{(i,j)}_k \bigr) ^3 + 2
f^{(i,j)}_{k-1} f^{(i,j)}_k
f^{(i,j)}_{k+1} - \bigl( f^{(i,j)}_{k+1}
\bigr)^2 f^{(i,j)}_{k-2}
\\
& &{} - \bigl( f^{(i,j)}_{k-1} \bigr)^2
f^{(i,j)}_{k+2} + f^{(i,j)}_{k-2}
f^{(i,j)}_k f^{(i,j)}_{k+2}.\nonumber
\end{eqnarray}
\upqed\end{pf}
%

\begin{lemma} \label{lem:checkcond4}
For the monotone Shepp--Olkin interpolation,
for each $i \neq j$, the
$b_{i,j} \geq0$ and
%
\begin{equation}
\label{eq:squarable} b_{i,j} \geq- \frac{p_i (1-p_j) + p_j(1-p_i)}{2} c_{i,j},
\end{equation}
and hence Condition \ref{cond:delta} is satisfied, and so the entropy
is concave.
\end{lemma}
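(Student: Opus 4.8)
The plan is, for each fixed ordered pair $i\neq j$, to prove $b_{i,j}\ge 0$ and the inequality \eqref{eq:squarable} by reducing both to a handful of cubic inequalities for the ``leave two out'' Bernoulli sum $f^{(i,j)}$, and then to deduce \eqref{eq:diffexp} pair by pair via a discriminant estimate. Once \eqref{eq:diffexp} is established, the preceding Proposition supplies Condition~\ref{cond:delta}, and Theorem~\ref{thm:cond1-4}---whose remaining hypotheses, Conditions~\ref{cond:kmon} and~\ref{cond:glc}, are already known from Propositions~\ref{prop:alphaMonotonicity} and~\ref{prop:GLC}---gives concavity of $t\mapsto H(f(t))$.

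Write $u_m:=f^{(i,j)}_m$, $D_m:=D^{(i,j)}_m=u_m^2-u_{m-1}u_{m+1}$ and $E_m:=E^{(i,j)}_m=u_mu_{m-1}-u_{m-2}u_{m+1}$, and note that $u$ is a sum of $n-2$ independent Bernoulli variables, hence log-concave (indeed $\ulc{n-2}$), so $D_m\ge 0$ and $E_m\ge 0$. Regrouping the four cubic blocks appearing in the explicit formula \eqref{eq:bdef} into these quantities gives
\[
b_{i,j}=\tfrac12\bigl(p_ip_j\,R_1+(1-p_i)(1-p_j)\,R_2+(1-p_i)p_j\,R_3+p_i(1-p_j)\,R_4\bigr),
\]
where $R_1=u_{k-1}D_k-u_{k+1}D_{k-1}$, $R_2=u_{k+1}D_k-u_{k-1}D_{k+1}$, $R_3=2u_kD_k-u_{k-1}E_{k+1}$ and $R_4=2u_kD_k-u_{k+1}E_k$ (here $R_1\leftrightarrow R_2$ and $R_3\leftrightarrow R_4$ under the reflection $m\mapsto 2k-m$); using \eqref{eq:cdef} as well, one finds the further identities $R_3+c_{i,j}=u_kD_k-u_{k-2}D_{k+1}$ and $R_4+c_{i,j}=u_kD_k-u_{k+2}D_{k-1}$. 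Since $\tfrac12\bigl(p_i(1-p_j)+p_j(1-p_i)\bigr)$ is half the sum of the coefficients of $R_3$ and $R_4$ in the displayed decomposition, the quantity $b_{i,j}+\tfrac12\bigl(p_i(1-p_j)+p_j(1-p_i)\bigr)c_{i,j}$ is obtained from that decomposition simply by replacing $R_3$ by $R_3+c_{i,j}$ and $R_4$ by $R_4+c_{i,j}$. Thus both $b_{i,j}\ge 0$ and \eqref{eq:squarable} follow at once if $R_1,R_2,R_3,R_4,R_3+c_{i,j},R_4+c_{i,j}$ are all nonnegative.

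The heart of the argument---and the step I expect to be the main obstacle---is this nonnegativity. Each of the six quantities has the shape (a mass of $f^{(i,j)}$) $\times\,D_m$ minus (a mass) $\times\,D_{m'}$ or $\times\,E_{m'}$, so none of them is a consequence of log-concavity of $f^{(i,j)}$ alone; they genuinely exploit the Bernoulli-sum structure. I would obtain each of them from the ``cubic'' inequality Theorem~\ref{th:Symmetric}, applied to $f^{(i,j)}$ (and, where needed, to its reflection, which is again a Bernoulli sum), together with the elementary positivity of the log-concavity defects $D_m$ and $E_m$. For instance the identities $R_3+c_{i,j}=u_kD_k-u_{k-2}D_{k+1}$ and $R_4+c_{i,j}=u_kD_k-u_{k+2}D_{k-1}$ reduce \eqref{eq:squarable}, in the extreme cases, to the monotonicity statements $u_kD_k\ge u_{k-2}D_{k+1}$ and $u_kD_k\ge u_{k+2}D_{k-1}$ for the products $u_\ell D_m$ along the sequence.

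It remains to assemble \eqref{eq:diffexp} from $b_{i,j}\ge 0$ and \eqref{eq:squarable}. Fix $i<j$ and put $s:=\tfrac12\bigl(p_i(1-p_j)+p_j(1-p_i)\bigr)\ge 0$; note that $c_{i,j}=c_{j,i}$ (both depend only on $f^{(i,j)}$) and that \eqref{eq:squarable}, for the two orderings, gives $b_{i,j}\ge -s\,c_{i,j}$ and $b_{j,i}\ge -s\,c_{i,j}$. The corresponding summand of \eqref{eq:diffexp},
\[
p_i'^2\,p_j(1-p_j)\,b_{i,j}+p_j'^2\,p_i(1-p_i)\,b_{j,i}+2\,p_i'p_j'\,p_i(1-p_i)p_j(1-p_j)\,c_{i,j},
\]
is a quadratic form in $(p_i',p_j')$, with nonnegative diagonal coefficients, evaluated on the positive quadrant. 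If $c_{i,j}\ge 0$ all three terms are nonnegative. If $c_{i,j}<0$ it suffices to verify the discriminant inequality $b_{i,j}\,b_{j,i}\ge p_i(1-p_i)p_j(1-p_j)\,c_{i,j}^2$, which follows from $b_{i,j},b_{j,i}\ge s\,|c_{i,j}|$ together with the AM--GM bound $s\ge\sqrt{p_ip_j(1-p_i)(1-p_j)}$. Summing over all pairs $i<j$ yields \eqref{eq:diffexp}, hence Condition~\ref{cond:delta}, and thus the concavity of the entropy.
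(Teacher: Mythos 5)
Your proposal is correct and follows essentially the same route as the paper: your $R_1,R_2,R_3,R_4,R_3+c_{i,j},R_4+c_{i,j}$ are exactly the quantities $C_1(k),\ol{C}_1(k),C_3(k),\ol{C}_3(k),\ol{C}_2(k),C_2(k)$ of Corollary \ref{cor:cubic}, all deduced there from the cubic inequality Theorem \ref{th:Symmetric} applied to $f^{(i,j)}$ and its reflection, and your discriminant step (with the AM--GM bound $s\ge\sqrt{p_ip_j(1-p_i)(1-p_j)}$) is the paper's computation $4p_i(1-p_i)p_j(1-p_j)-(p_i(1-p_j)+p_j(1-p_i))^2=-(p_i-p_j)^2\le 0$ in disguise.
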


\begin{pf} To verify the positivity of $b_{i,j}$, we simply observe
that each of the brackets
in equation (\ref{eq:bdef}) is positive, by an application of equations
(\ref{eq:cubic1}),
(\ref{eq:cubic2}), (\ref{eq:cubic3}) and (\ref{eq:cubic4}) proved in the \hyperref[sec:technical]{Appendix} below.

To verify that $b_{i,j} + \frac{1}{2}  (p_i (1-p_j) + p_j(1-p_i)
 )
c_{i,j}$ is positive, we consider
adding the final two terms of equation (\ref{eq:bdef}) to the
expression given in (\ref{eq:cdef}),
to obtain
\begin{eqnarray*}
&& (1-p_i) p_j \bigl( \bigl(
f^{(i,j)}_k \bigr)^3 - f^{(i,j)}_{k-1}
f^{(i,j)}_k f^{(i,j)}_{k+1} - \bigl(
f^{(i,j)}_{k+1} \bigr)^2 f^{(i,j)}_{k-2}
+ f^{(i,j)}_{k-2} f^{(i,j)}_k
f^{(i,j)}_{k+2} \bigr)
\\
& &\qquad{} + p_i(1-p_j)\\
&&\qquad\quad{}\times \bigl( \bigl( f^{(i,j)}_k
\bigr)^3 - f^{(i,j)}_{k-1} f^{(i,j)}_k
f^{(i,j)}_{k+1} - \bigl( f^{(i,j)}_{k-1}
\bigr)^2 f^{(i,j)}_{k+2} + f^{(i,j)}_{k-2}
f^{(i,j)}_k f^{(i,j)}_{k+2} \bigr),
\end{eqnarray*}
where the positivity of the two final terms is guaranteed by equations
(\ref{eq:cubic5})
and (\ref{eq:cubic6}) below.

Condition \ref{cond:delta} is verified by considering two separate
cases. If $c_{i,j} \geq0$, then all the terms in
equation (\ref{eq:diffexp}) are positive. Otherwise, if $c_{i,j} \leq
0$, then the
bracketed term in equation (\ref{eq:diffexp}) has negative discriminant
as a function of $p_i'$ and $p_j'$,
\begin{eqnarray*}
&&4 p_i (1-p_i) p_j (1-p_j)
c_{i,j}^2 - 4 b_{i,j} b_{j,i}\\
&&\qquad \leq
c_{i,j}^2 \bigl( 4 p_i (1-p_i)
p_j (1-p_j) - \bigl(p_i
(1-p_j) + p_j(1-p_i)\bigr)^2
\bigr)
\\
& &\qquad=  - c_{i,j}^2 (p_i - p_j)^2
\leq0,
\end{eqnarray*}
since under this assumption both sides of equation (\ref{eq:squarable})
are positive, so it can be squared.
In either case, we conclude that equation (\ref{eq:diffexp}) is
positive, and Condition~\ref{cond:delta} is satisfied.
\end{pf}
Since Condition \ref{cond:delta} has been verified, the proof of
Theorem~\ref{thm:SObis} is complete.

\begin{appendix}\label{app}
\section*{Appendix: Technical results regarding Bernoulli sums} \label{sec:technical}

In this section, we prove some technical results regarding the mass
functions of Bernoulli sum
random variables, required to prove the monotone Shepp--Olkin Theorem~\ref{thm:SObis}.

\begin{lemmaa} \label{lem:soiBvar}
Let $(f_k)_{k \in\Z}$ be the Bernoulli sum of parameters $p_1,\ldots,
p_m$. Then for every $k \in\Z$ and $q \geq1$, we have the identity
%
\begin{equation}
\label{eq:soiBvar} qf_k f_{k-q} = \sum
_{j=1}^m p_j(1-p_j)
\bigl[f^{(j)}_{k-1} f^{(j)}_{k-q}
-f^{(j)}_{k}f^{(j)}_{k-q-1} \bigr].
\end{equation}
\end{lemmaa}

\begin{pf}
We use induction on the number $m$ of parameters, the case where $m=1$
being obvious. If $(f_k)_{k \in\Z}$ is the Bernoulli sum of parameters
$p_1,\ldots, p_m$, we set for $p \in[0,1]$
\[
\tilde{f}_k := (1-p) f_k + p f_{k-1},
\]
and we want to prove, given $k \in\Z$ and $q \geq1$, that
%
\begin{eqnarray}
\label{eq:soiBvarRecu} q\tilde{f}_k \tilde{f}_{k-q}& =& \sum
_{j=1}^m p_j(1-p_j)
\bigl[\tilde {f}^{(j)}_{k-1} \tilde{f}^{(j)}_{k-q}
-\tilde{f}^{(j)}_{k}\tilde {f}^{(j)}_{k-q-1}
\bigr]
\nonumber
\\[-8pt]
\\[-8pt]
\nonumber
&&{} +p(1-p) [f_{k-1}f_{k-q}-f_k f_{k-q-1}
].
\end{eqnarray}
Expanding each side with the respect to the basis $(1-p)^2,2p(1-p), p^2$,
using the fact that $\tilde{f}_k^{(j)} := (1-p) f_k^{(j)} + p f_{k-1}^{(j)}$,
it is easy to show that equation \eqref{eq:soiBvarRecu} is satisfied
for some $k\in\Z$ and $q\geq1$ if \eqref{eq:soiBvar} holds true for
the pairs $(k,q)$,$(k,q+1)$ and $(k-1,q)$.
\end{pf}

Next
we prove the following technical inequality, which may be of
independent interest:

\begin{theoremm}\label{th:Symmetric}
$\operatorname{Property}(m)$ holds: that is,
for every $(g^{[m]}_k)_{k \in\Z}$ which is the probability mass function
of a sum of $m$
independent Bernoulli variables
%
\begin{eqnarray}
\label{eq:cubic1}\quad C_1(k) := g^{[m]}_{k-1} \bigl(
g^{[m]}_{k} \bigr)^2-2 \bigl(
g^{[m]}_{k-1} \bigr)^2 g^{[m]}_{k+1}+
g^{[m]}_{k} g^{[m]}_{k+1}
g^{[m]}_{k-2} \geq0\qquad
\mbox{for all $k \in\Z$.}\nonumber\\
\end{eqnarray}
\end{theoremm}

We first show that $\operatorname{Property}(m)$ implies a number of related
inequalities,
which are of
use elsewhere in the paper:

\begin{corollaryy} \label{cor:cubic}
If $\operatorname{Property}(m)$ holds, then for any $g^{[m]}$,
the probability mass function for the sum of $m$ independent Bernoulli
random variables, for all $k \in\Z$,
%
\begin{eqnarray}\qquad
\ol{C}_1(k)& :=& \bigl( g^{[m]}_k
\bigr)^2 g^{[m]}_{k+1} - 2 \bigl(g^{[m]}_{k+1}
\bigr)^2 g^{[m]}_{k-1} + g^{[m]}_{k+2}
g^{[m]}_{k} g^{[m]}_{k-1} \geq 0,
\label
{eq:cubic2}
\\
C_2(k)& := &\bigl( g^{[m]}_k \bigr)^3
- g^{[m]}_{k-1} g^{[m]}_k
g^{[m]}_{k+1} - \bigl( g^{[m]}_{k-1}
\bigr)^2 g^{[m]}_{k+2} + g^{[m]}_{k-2}
g^{[m]}_k g^{[m]}_{k+2}  \geq 0,
\label{eq:cubic5}
\\
\ol{C}_2(k) &:=& \bigl( g^{[m]}_k
\bigr)^3 - g^{[m]}_{k-1} g^{[m]}_k
g^{[m]} _{k+1} - \bigl( g^{[m]}_{k+1}
\bigr)^2 g^{[m]}_{k-2} + g^{[m]}_{k-2}
g^{[m]}_k g^{[m]}_{k+2}  \geq 0,
\label{eq:cubic6}
\\
C_3(k) &:=& 2 \bigl(g^{[m]}_k
\bigr)^3 - 3 g^{[m]}_{k-1} g^{[m]}_k
g^{[m]}_{k+1} + \bigl( g^{[m]}_{k-1}
\bigr)^2 g^{[m]}_{k+2}  \geq 0, \label{eq:cubic3}
\\
\ol{C}_3(k) &:=& 2 \bigl(g^{[m]}_k
\bigr)^3 - 3 g^{[m]}_{k-1} g^{[m]}_k
g^{[m]} _{k+1} + \bigl(g^{[m]}_{k+1}
\bigr)^2 g^{[m]}_{k-2} \geq 0, \label{eq:cubic4}
\\
D_1(k) &:=& 2 \bigl( g^{[m]}_k
\bigr)^2 g^{[m]}_{k-2} - 3 g^{[m]}_{k-2}
g^{[m]} _{k-1} g^{[m]}_{k+1} +
g^{[m]}_{k+1} g^{[m]}_{k}
g^{[m]}_{k-3} \geq 0. \label{eq:extra}
\end{eqnarray}
\end{corollaryy}

\begin{pf} First note that these inequalities can be paired up by a
duality argument.
That is, if $\operatorname{Property}(m)$ holds for every Bernoulli sum
$g^{[m]}$, it is
true for $\overline{g}_k := g^{[m]}_{m-k}$ with parameters $1-p_1, 1-p_2,
\ldots, 1-p_m$, which implies
equation~(\ref{eq:cubic2}). Similarly (\ref{eq:cubic5}) implies (\ref
{eq:cubic6}) and
(\ref{eq:cubic3}) implies (\ref{eq:cubic4}).
We write
\[
D^{[m]}_k = \bigl( g^{[m]}_k
\bigr)^2 - g^{[m]}_{k-1} g^{[m]}_{k+1},
\]
which is positive because $g^{[m]}$ is a sum of independent Bernoulli
random variables, and therefore log-concave.
In this notation, equations (\ref{eq:cubic5}), (\ref{eq:cubic3}) and
(\ref{eq:extra})
are a consequence of (\ref{eq:cubic1}), since simple calculations show that
\begin{eqnarray*}
g^{[m]}_{k+1} C_2(k) & = & 2\bigl(
g^{[m]}_k g^{[m]}_{k+1} -
g^{[m]}_{k-1} g^{[m]}_{k+2}\bigr)
D^{[m]}_k + g^{[m]}_{k+2}
C_1(k) \geq0,
\\
g^{[m]}_k C_3(k) & = & 2 \bigl(D^{[m]}_k
\bigr)^2 + g^{[m]}_{k-1} \ol{C}_1(k)
\geq0,
\\
g^{[m]}_{k-1} D_1(k) & = & 2 g^{[m]}_{k-2}
C_1(k) + g^{[m]}_{k+1} C_1(k-1) \geq0.
\end{eqnarray*}
Here, positivity of $( g^{[m]}_k g^{[m]}_{k+1} - g^{[m]}_{k-1}
g^{[m]}_{k+2}) $ is
again a consequence of log-concavity of $g^{[m]}$.
\end{pf}
In a similar way, we can argue that:

\begin{propositionn} \label{prop:PnEquiv1General}
If $\operatorname{Property}(m)$ is satisfied, then for every sum of $m$ independent
Bernoulli variables we have
%
\begin{equation}
\label{eq:PnEquiv1General} g^{[m]}_k D^{[m]}_{k}+
g^{[m]}_{k-2} D^{[m]}_{k+1} \geq2
g^{[m]}_{k+2} D^{[m]}_{k-1} \qquad\mbox{for every
$k \in\Z$.}
\end{equation}
\end{propositionn}

\begin{pf} We can restate $\operatorname{Property}(m)$ as being equivalent
to the inequality
\[
D^{[m]}_{k} \geq\frac{g^{[m]}_{k+1}}{g^{[m]}_{k-1}} D^{[m]}_{k-1},
\]
the iteration of which gives
\[
D^{[m]}_{k+1} \geq\frac{g^{[m]}_{k+2}}{g^{[m]}_{k} }\frac{
g^{[m]}_{k+1}}{ g^{[m]}_{k-1}}
D^{[m]}_{k-1},
\]
so equation \eqref{eq:PnEquiv1General} holds if we have
\[
\frac{g^{[m]}_{k} g^{[m]}_{k+1}}{g^{[m]}_{k-1} g^{[m]}_{k+2}} + \frac
{g^{[m]}_{k-2}
g^{[m]}
_{k+1}}{ g^{[m]}_{k} g^{[m]}_{k-1}} -2 \geq0,
\]
which can be rewritten as
\[
\frac{ C_1(k+1)}{g^{[m]}_{k-1} g^{[m]}_{k+1} g^{[m]}_{k+2}} + 2 \frac
{  (
D^{[m]}
_k  )^2}{
  ( g^{[m]}_k  )^2 g^{[m]}_{k-1} g^{[m]}_{k+1}} + \frac{ C_1(k)}{  (g^{[m]}_{k}  )^2 g^{[m]}_{k-1} } \geq0,
\]
which is positive by assumption, which proves the proposition.
\end{pf}

\begin{pf*}{Proof of Theorem~\ref{th:Symmetric}}
We prove $\operatorname{Property}(m)$ by induction on the number of
parameters $m$.
It is clear that $\operatorname{Property}(1)$ is true. Let us suppose
$\operatorname{Property}(m)$ holds for some
$m \geq1$. In order to prove
$\operatorname{Property}(m+1)$, it suffices to show that, for every $k \in
\Z$,
%
\begin{equation}
\label{eq:ToProve} g^{[m+1]}_{k-1} \bigl( g^{[m+1]}_{k}
\bigr)^2-2 \bigl( g^{[m+1]}_{k-1}
\bigr)^2 g^{[m+1]}_{k+1} + g^{[m+1]}_{k-2}
g^{[m+1]}_{k} g^{[m+1]}_{k+1} \geq0,
\end{equation}
where $g^{[m+1]}$ is the distribution of a sum of $m+1$ Bernoulli variables.
For $p = p_{m+1} \in[0,1]$, we can write
$ g^{[m+1]}_{k} := (1-p) g^{[m]}_{k}+p g^{[m]}_{k-1}$.
To prove that~\eqref{eq:ToProve} is positive, we expand it as a
polynomial in $p$, of order $3$, in the basis
$(1-p)^3,p(1-p)^2,p^2(1-p),p^3$, and show that the coefficients of each
of these terms are positive:
\begin{longlist}[(1)]
\item[(1)]
The coefficient of $(1-p)^3$ is $C_1(k)$, which is clearly positive, by
$\operatorname{Property}(m)$.

\item[(2)]
The coefficient of $p^3$ is $C_1(k-1)$,
which is also positive, by $\operatorname{Property}(m)$.

\item[(3)]
The coefficient of $p(1-p)^2$ is $D_1(k)$, which is positive, since
$\operatorname{Property}(m)$ implies
equation (\ref{eq:extra}).

\item[(4)]
The coefficient of $p^2(1-p)$ can be written
$g^{[m]}_{k-1} D^{[m]}_{k-1} + g^{[m]}_{k-3} D^{[m]}_{k} - 2
g^{[m]}_{k+1} D^{[m]}
_{k-2}$, which is positive by Proposition~\ref{prop:PnEquiv1General}.
\end{longlist}
Since each coefficient is positive, we deduce that
equation \eqref{eq:ToProve} is satisfied, which shows that $\operatorname{Property}(m+1)$ holds.
\end{pf*}
\end{appendix}

\section*{Acknowledgment}
The authors would like to thank an anonymous referee for a careful
reading of this paper, which has resulted in numerous improvements to it.


%

%




\printaddresses
\end{document}